\newcommand{\fix}{\texorpdfstring}
\definecolor{Crayola Bittersweet}{RGB}{254,111,94}
\definecolor{Dark Orchid}{RGB}{153,50,204}
\definecolor{Goblin Green}{RGB}{99,181,33}
\definecolor{Cobalt Ice}{RGB}{70,129,177}
\newcommand{\joe}[1]{\textcolor{Crayola Bittersweet}{Joe: #1}}
\theoremstyle{plain}
\newtheorem{thm}{Theorem}[section]
\newtheorem{lem}[thm]{Lemma}
\newtheorem{prop}[thm]{Proposition}
\theoremstyle{definition}
\newtheorem{defn}[thm]{Definition}
\newtheorem{question}[thm]{Question}
\pgfplotsset{compat=1.7}
\newcommand{\Worst}{\operatorname{Worst}}
\DeclareMathOperator{\uhr}{\upharpoonright}
	\renewcommand{\restriction}{\uhr}
\newcommand{\concat}{^\smallfrown}
\newcommand{\Mix}{\operatorname{Mix}}
\newcommand{\A}{\mathcal{A}}
\newcommand{\symdiff}{\mathbin{\Delta}} 
\newcommand{\cond}{\,|\,} 
\begin{document}

\title[Redundancy of information: Lowering effective dimension]{Redundancy of information:\\Lowering effective dimension}

\author[Goh]{Jun Le Goh}
\author[Miller]{Joseph S.~Miller}
\author[Soskova]{\\Mariya I.~Soskova}
\author[Westrick]{Linda Westrick}

\address[Goh]{Department of Mathematics, National University of Singapore, 10 Lower Kent Ridge Road, Singapore 119076}
\email{gohjunle@nus.edu.sg}
\urladdr{https://blog.nus.edu.sg/gohjunle/}

\address[Miller, Soskova]{Department of Mathematics, University of Wisconsin--Madison, 480 Lincoln Dr., Madison, WI 53706, USA}
\email{jmiller@math.wisc.edu}
\email{msoskova@math.wisc.edu}
\urladdr{http://www.math.wisc.edu/~jmiller/}
\urladdr{http://www.math.wisc.edu/~msoskova/}

\address[Westrick]{Penn State University Department of Mathematics, University Park, PA}
\email{westrick@psu.edu}
\urladdr{http://www.personal.psu.edu/lzw299/}

\thanks{The second and third authors are partially supported by NSF Grant No.\ DMS-2053848.  The fourth author is partially supported by NSF Grant No.\ DMS-2154173.}

\date{\today}

\subjclass[2020]{Primary 03D32; Secondary 68Q30, 94B75.}


\begin{abstract}
Let $\mathcal A_t \subseteq 2^\omega$ denote the set of infinite sequences of effective dimension $t$. Greenberg, Miller, Shen, and Westrick~\cite{GMSW:18} studied both how near and how far an infinite sequence of dimension $s$ can be from the closest sequence of dimension $t$, where distance in $2^\omega$ is measured using the Besicovitch pseudometric. They found $\inf_{X\in \mathcal A_s} d(X,\mathcal A_t)$ and $\sup_{X\in \mathcal A_s} d(X,\mathcal A_t)$ for all $s,t\in[0,1]$, except for the supremum when $t<s<1$. This case is made difficult by the fact that the information in a dimension $s$ sequence can be coded redundantly, so it is not clear what density of changes is needed to erase enough of that information. We completely solve the dimension reduction problem.

We also identify classes of sequences for which these infima and suprema are realized as minima and maxima. When $t<s$, we find $d(X,\mathcal A_t)$ is minimized when $X$ is a Bernoulli $H^{-1}(s)$-random, and maximized when $X$ belongs to a class of infinite sequences that we call $s$-codewords. When $s<t$, the situation is reversed. Finally, we prove that all distances between the extrema are realized.
\end{abstract}

\maketitle

\section{Introduction}

The \emph{effective} (\emph{Hausdorff}) \emph{dimension} of an infinite binary sequence $X\in 2^\omega$ is
\[
\dim(X) = \liminf_{n \to \infty} \frac{K(X\restriction n)}{n},
\]
where $K$ is prefix-free Kolmogorov complexity. This notion was introduced by Lutz~\cite{L:00}. The characterization above, which was given by Mayordomo~\cite{M:02}, makes it clear what $\dim(X)$ is measuring: the asymptotic information density of $X$.

In this paper, we study how far a given infinite sequence $X\in 2^\omega$ with $\dim(X)=s$ can be from the nearest dimension $t$ sequence. In other words, how much do we need to change $X$ to rise or lower its effective dimension by a specified amount? A natural way to measure the distance between two infinite sequences $X,Y\in 2^\omega$ is to consider the \emph{Besicovitch distance}, 
\[
d(X,Y) = \limsup_{n\to\infty} \frac{\left|\{m < n: X(m) \neq Y(m)\}\right|}{n}.
\]
Note that $d$ is only a pseudometric on $2^\omega$ because $d(X,Y) = 0$ does not imply that $X$ and $Y$ are equal.
Nevertheless, if $d(X,Y)=0$, then $\dim(X) = \dim(Y)$, so each Besicovitch equivalence
class has a well-defined dimension. Furthermore, dimension and Besicovitch distance 
have the following relationship, whose proof is straightforward.

\begin{prop}[GMSW~{\cite[Proposition 3.1]{GMSW:18}}]\label{prop:lower_bound}
Consider $X,Y\in 2^\omega$ with $\dim(X)=s$ and $\dim(Y)=t$, where $s \leq t$. Then
\[
d(X,Y)\geq H^{-1}(t-s).
\]
\end{prop}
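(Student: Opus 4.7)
The plan is to describe $Y \restriction n$ in terms of $X \restriction n$ together with the set of positions where they differ, then translate this into a Kolmogorov complexity bound. Write $\delta = d(X,Y)$; since $H^{-1}$ takes values in $[0,1/2]$, the conclusion is trivial when $\delta > 1/2$, so we may assume $\delta \leq 1/2$. For each $n$, let $d_n = |\{m<n : X(m) \neq Y(m)\}|$, so that $\limsup_n d_n/n = \delta$.

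The key bound is
\[
K(Y\restriction n) \leq K(X\restriction n) + \log\binom{n}{d_n} + O(\log n),
\]
obtained by concatenating a shortest prefix-free description of $X \restriction n$ with a self-delimiting encoding of $d_n$ and a $\lceil\log\binom{n}{d_n}\rceil$-bit specification of which size-$d_n$ subset of $\{0,\ldots,n-1\}$ is the disagreement set. Applying the standard entropy bound $\log\binom{n}{k} \leq n\,H(k/n)$, dividing by $n$, and rearranging yields
\[
\frac{K(Y\restriction n)}{n} - H(d_n/n) \leq \frac{K(X\restriction n)}{n} + O\!\left(\frac{\log n}{n}\right).
\]

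Take $\liminf_n$ of both sides: the right-hand side tends to $s$, while for the left-hand side, the inequality $\liminf_n (a_n - b_n) \geq \liminf_n a_n - \limsup_n b_n$ together with the fact that $H$ is continuous and increasing on $[0,1/2]$ (so $\limsup_n H(d_n/n) \leq H(\delta)$) gives a lower bound of $t - H(\delta)$. Hence $H(\delta) \geq t-s$, and since $H \colon [0,1/2] \to [0,1]$ is a bijection, $\delta \geq H^{-1}(t-s)$. The only subtlety in the argument is coordinating the $\liminf$ in the definition of $\dim$ with the $\limsup$ in the definition of $d$, which works out cleanly because $H$ is monotone on $[0,1/2]$.
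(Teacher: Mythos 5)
Your proof is correct and is essentially the argument this paper has in mind when it calls the proposition ``straightforward'' and defers to GMSW: describe $Y\restriction n$ by a description of $X\restriction n$ together with the disagreement set, bound the cost of specifying that set by $\log\binom{n}{d_n} \leq nH(d_n/n)$ plus $O(\log n)$ overhead, and pass to the limit using the monotonicity and continuity of $H$ on $[0,1/2]$. Your handling of the two subtleties --- the trivial case $d(X,Y) > 1/2$ and the coordination of the $\liminf$ in $\dim$ with the $\limsup$ in $d$ via $\liminf(a_n - b_n) \geq \liminf a_n - \limsup b_n$ --- is sound, so nothing is missing.
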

Here $H^{-1}$ is a partial inverse of the binary entropy function $H$ (see Section~\ref{sec:prelim}). So when $X$ and $Y$ are Besicovitch-close, their effective dimensions are close as well.
Of course, the converse fails very badly: $X$ and its complement $\overline{X}$ (defined by $\overline X(n) = 1-X(n)$) have the same dimension but $d(X,\overline X) = 1$.

We seek to understand in more detail the structure of the space $(2^\omega,d,\dim)$. Greenberg, Miller, Shen, and Westrick~\cite{GMSW:18} initiated the study of the relationship between effective dimension and Besicovitch distance. They showed the following. For $X \in 2^\omega$ and $t \in [0,1]$, define
\[
d(X,t) = d(X, \A_t) =\inf \{d(X,Y) : \dim(Y) = t\}.
\]

\begin{thm} Fix $s,t\in[0,1]$. Letting $X$ range over $\A_s$,
\begin{enumerate}
\item \cite[Theorem 4.1 and preceding comments on pg.\ 100]{GMSW:18} For $t\geq s$, $d(X,t)$ achieves its maximum possible value when $X$ is a Bernoulli $H^{-1}(s)$-random, and takes value $H^{-1}(t)-H^{-1}(s)$.
\item \cite[Proposition 3.5]{GMSW:18} For $t\geq s$, 
there is an $X$ for which $d(X,t)$ achieves its minimum possible value 
of $H^{-1}(t-s)$.
\item \cite[Proposition 3.5]{GMSW:18} For $t\leq s$, 
there is an $X$ for which $d(X,t)$ achieves its minimum possible value 
of $H^{-1}(s-t)$. 
\item \cite[Theorem 3.3]{GMSW:18} In the special case $s=1$, $d(X,t)$ has a constant value of $H^{-1}(1-t)$ 
(so this is also the maximum possible value of $d(X,t)$ when $s=1$).
\end{enumerate}
\end{thm}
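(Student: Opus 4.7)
The ``$\geq$'' bounds in parts (2), (3), and (4) follow immediately from Proposition~\ref{prop:lower_bound}. The content is in realizing matching upper bounds. For the minima (parts 2 and 3), I plan to take $X$ of a ``sufficiently generic'' form so that XOR with relatively random Bernoulli noise hits dimension $t$ exactly. For $t \geq s$, pick $X \in \A_s$ such that for $R$ a Bernoulli $H^{-1}(t-s)$-random relative to $X$, the XOR $Y = X \oplus R$ satisfies $\dim(Y) = s + H(H^{-1}(t-s)) = t$ (sharp subadditivity, which holds for suitably independent components) and automatically $d(X,Y) = H^{-1}(t-s)$. Existence of such $X$ uses that one can build dim-$s$ sequences that are independent, in the sense that $K((X \oplus R)\restriction n) \approx K(X\restriction n) + K(R\restriction n)$, from their relative Bernoulli noise. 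For $t \leq s$, reverse the roles: pick $Y \in \A_t$ and an independent Bernoulli $H^{-1}(s-t)$-random $R$, then set $X = Y \oplus R$ so that $\dim(X) = s$ and $d(X,Y) = H^{-1}(s-t)$.

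For the maximum at Bernoulli (part 1), let $X$ be Bernoulli $H^{-1}(s)$-random. Critically, XOR does \emph{not} realize $H^{-1}(t) - H^{-1}(s)$ here: the XOR of two Bernoullis is a Bernoulli with convolved bias whose entropy is strictly less than $s + H(H^{-1}(t-s))$, so this approach yields $\dim(Y) < t$. Instead, realize the distance by selective bit flipping---flip $0$'s of $X$ to $1$'s at density $(H^{-1}(t) - H^{-1}(s))/(1 - H^{-1}(s))$ among the $0$-positions, selected by a Bernoulli random relative to $X$. The result $Y$ is Bernoulli $H^{-1}(t)$-random, so $\dim(Y) = t$, and the overall flip density is exactly $H^{-1}(t) - H^{-1}(s)$. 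For tightness at Bernoulli $X$: any $Y \in \A_t$ has asymptotic $1$-density at least $H^{-1}(t)$ (else its dimension would fall below $t$ by a Shannon compression), which forces at least $H^{-1}(t) - H^{-1}(s)$ flips. The universal maximum claim---that this bound applies to \emph{every} $X \in \A_s$, not just Bernoulli ones---would require a flipping recipe sensitive to the $1$-density structure of an arbitrary $X$.

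For part (4), a sharp case: given any $X \in \A_1$, pick $R$ Bernoulli $H^{-1}(1-t)$-random relative to $X$ and set $Y = X \oplus R$. The distance is $H^{-1}(1-t)$, and sharp subadditivity gives $\dim(Y) = 1 - H(H^{-1}(1-t)) = t$, where $X$'s incompressibility makes the subadditive bound tight. Combined with Proposition~\ref{prop:lower_bound}, this yields $d(X,t) = H^{-1}(1-t)$ for every $X \in \A_1$.

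\textbf{Main obstacle.} Across all parts, the delicate step is verifying that the constructed $Y$ has dimension exactly $t$ rather than merely $\leq t$; this is equality in a subadditive Kolmogorov-complexity bound and requires genuine independence between the components. It is most tractable when one piece has strong structure, as in Bernoulli randomness (parts 1 and 2) or incompressibility (part 4). The hardest subcase is the universal upper bound in part (1) for arbitrary $X \in \A_s$, whose $1$-density may not even be well-defined, and where neither the selective-flip construction nor the XOR construction directly applies.
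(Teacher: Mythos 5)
You are attempting to prove a statement that the paper itself does not reprove: all four items are quoted from \cite{GMSW:18}, so the comparison is with the GMSW arguments and with the machinery this paper later develops to strengthen them. The decisive flaw is your part (4). XORing a dimension-$1$ sequence $X$ with a Bernoulli $H^{-1}(1-t)$-random $R$ relative to $X$ does not lower dimension to $t$: there is no ``sharp subadditivity'' running in that direction. From $Y=X\symdiff R$ one only gets $K(X\restriction n)\leq K(Y\restriction n)+K(R\restriction n)+o(n)$, which reproduces the lower bound $\dim(Y)\geq t$ already given by Proposition~\ref{prop:lower_bound}; the upper bound $\dim(Y)\leq t$ is simply false in general. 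Concretely, if $X$ is Martin-L\"of random, then $(X,R)$ is random for the product measure and $Y=X\symdiff R$ is again Lebesgue-random, hence of dimension $1$, not $t$. Random perturbations preserve or raise dimension rather than erase information---this is exactly the Posobin--Shen phenomenon \cite{PS:19} cited in the introduction, and it is why lowering dimension requires choosing the perturbation adversarially. The correct argument (GMSW Theorem 3.3) replaces each block of $X$ by the nearest center of an $H^{-1}(1-t)n$-covering code of size $2^{tn+o(n)}$ (cf.\ Theorem~\ref{thm:DP} and Lemma~\ref{lem:joe}), with Proposition~\ref{prop:lower_bound} supplying the matching lower bound on $\A_1$.

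The other parts also have gaps, which you partly acknowledge. In part (1) you establish only half the claim: that a Bernoulli $H^{-1}(s)$-random attains the value $H^{-1}(t)-H^{-1}(s)$ (your selective-flip construction plus the density argument is fine in outline, modulo a randomness-preservation step for possibly noncomputable biases, and the density claim should be phrased as $H(\rho(Y\restriction n))\geq t-o(1)$ for all large $n$ rather than via an asymptotic density, which need not exist). The assertion that this value is the \emph{maximum} of $d(X,t)$ over all of $\A_s$ is precisely the content of GMSW's Theorem 4.1---a block-coding argument showing every $X\in\A_s$ has some $Y$ of dimension $t$ within distance $H^{-1}(t)-H^{-1}(s)$---and you leave it open; that is a genuine gap, not a deferred detail. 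In parts (2)--(3), the ``sufficiently generic $X$'' with $K((X\symdiff R)\restriction n)\approx K(X\restriction n)+K^X(R\restriction n)$ is itself the missing idea: relative Bernoulli randomness of $R$ does not yield it (your own remark about convolved biases shows additivity fails for, e.g., Bernoulli $X$). What makes the additivity work is a redundancy structure letting $X\restriction I_n$ be recovered from $(X\symdiff R)\restriction I_n$ with $o(n)$ extra bits---exactly the covering-code property of Lemma~\ref{lem:joe} and the $s$-codewords of Definition~\ref{def:codeword}, as used in Proposition~\ref{prop:codeword_inc-dim} (and, for the decreasing direction, Proposition~\ref{prop:Bernoulli_dec-dim} via coverings of Hamming balls); GMSW's Proposition 3.5 gives an alternative explicit construction. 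Without supplying such a construction, parts (2)--(3) remain a plan rather than a proof.
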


In \cite{GMSW:18}, they asked for the proper version of the fourth item, 
considering arbitrary $t\leq s \leq 1$. Below we provide this generalization by determining the optimal upper bound for $d(X,t)$, where $t \leq s \leq 1$ and $X$ ranges over sequences of dimension $s$. With this we have a complete understanding of the minimum and maximum distance needed to move from an infinite sequence of dimension $s$ to one of dimension $t$ for arbitrary values for $s$ and $t$. Furthermore, we provide (more) details on where the minimum and maximum values of $d(X,t)$ are attained. On one hand we prove that Bernoulli randoms are as close as possible to sequences of lower dimension, complementing \cite[Theorem 4.1]{GMSW:18}. On the other hand, if the algorithmic information in a sequence $X$ is maximally redundant (in a sense made precise in Definition~\ref{def:codeword}), we call $X$ an $s$-codeword and prove that $X$ is as close as possible to sequences of higher dimension and as far as possible from sequences of lower dimension. The improved result is as follows. The first item is unchanged from \cite{GMSW:18}.

\begin{thm} \label{thm:main_thm}
Fix $s,t\in[0,1]$. Letting $X$ range over $\A_s$,
\begin{enumerate}
\item For $t\geq s$, $d(X,t)$ achieves its maximum possible value when $X$ is a Bernoulli $H^{-1}(s)$-random, and takes value $H^{-1}(t)-H^{-1}(s)$.
\item For $t\geq s$, $d(X,t)$ achieves its minimum possible value when $X$ is a 
$s$-codeword, and takes value $H^{-1}(t-s)$.
\item For $t\leq s$, $d(X,t)$ achieves its minimum possible value when $X$ is a 
Bernoulli $H^{-1}(s)$-random, and takes value $H^{-1}(s-t)$. 
\item For $t\leq s$, $d(X,t)$ achieves its maximum possible value when $X$ is an 
$s$-codeword, and takes value 
$$d(X,t) = \Worst(s,t) \mathrel{\mathop:}= \begin{cases}
H^{-1}(1-t) & \text{ if } t \leq 1 - H(c)\\
\frac{c}{s-(1-H(c))}(s-t) & \text{ if } t > 1 - H(c),
\end{cases}$$
where $c=1-2^{s-1}$.
\end{enumerate}
\end{thm}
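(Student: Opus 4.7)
The main novelty is item (4), so I will focus the proof plan there; items (1)--(3) will then follow from more direct arguments. The plan for item (4) is to split into two inequalities: a universal upper bound $d(X,t) \leq \Worst(s,t)$ for every $X \in \A_s$, and a matching lower bound for $s$-codewords.

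For the upper bound, I would explicitly construct a $Y \in \A_t$ close to $X$, with two regimes corresponding to the two cases of $\Worst$. When $t \leq 1-H(c)$ with $c = 1 - 2^{s-1}$, I would overwrite $X$ on a computable set of density $H^{-1}(1-t)$ with a computable pattern, forcing $K(Y\restriction n)/n \to t$ because the inserted deterministic bits suppress the complexity of long prefixes. When $t > 1-H(c)$ a single overwrite is insufficient and a mixing strategy is needed: partition $\omega$ into subsets $A,B$, overwrite $X$ on $A$ with a pattern achieving per-unit complexity $1-H(c)$ at bit-flip cost $c$, and leave $B$ untouched (contributing complexity $s$ per unit density). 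The proportions are chosen so that the weighted average complexity density equals $t$, which yields the linear formula in $s-t$. The threshold $c=1-2^{s-1}$ emerges as the optimal trade-off parameter in this construction, and the case split at $t=1-H(c)$ marks the transition between the two regimes.

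The matching lower bound for $s$-codewords is the technical heart of the argument, and I expect it to be the main obstacle. The strategy is to exploit the defining redundancy of Definition~\ref{def:codeword}: the information in an $s$-codeword is spread maximally evenly along the sequence, so there are no information-poor regions that a close approximation could exploit. I would translate this into a complexity inequality of the following shape: for any $Y$ with $d(X,Y) = \delta$, the codeword structure of $X$ allows $Y\restriction n$ to be reconstructed from $X\restriction n$ together with an error description of size roughly $nH(\delta)$, which (given the redundant distribution of complexity along $X$) forces $K(Y\restriction n)/n$ to lie above an explicit function $f(s,\delta)$. Inverting $f(s,\delta) \leq t$ must yield exactly $\delta \geq \Worst(s,t)$. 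The case split at $t = 1-H(c)$ should reappear here as the point where the optimization over error-pattern densities shifts from ``concentrate all changes in a fraction $c$ of positions'' to ``spread them more thinly,'' mirroring the upper bound's mixing construction by duality. The key lemma is a counting/entropy estimate in the spirit of classical coding theory bounds, adapted to Kolmogorov complexity and the Besicovitch pseudometric.

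Items (2) and (3) follow more directly once the right auxiliary constructions are in place. In both cases the lower bound is Proposition~\ref{prop:lower_bound}, so only achievability is at stake. For (3), given a Bernoulli $H^{-1}(s)$-random $X$ and $t \leq s$, I would construct $Y$ at distance $H^{-1}(s-t)$ by modifying $X$ on an independently chosen set of positions of density $H^{-1}(s-t)$, using the genericity of $X$ to verify that $\dim(Y) = t$. For (2), an $s$-codeword $X$ admits a close sequence of higher dimension by injecting fresh randomness of density $H^{-1}(t-s)$ into the redundant coordinates identified by the codeword structure of Definition~\ref{def:codeword}, giving $d(X,Y) = H^{-1}(t-s)$ and $\dim(Y) = t$ by the additivity of complexity contributions on disjoint position sets.
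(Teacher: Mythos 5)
Your plan for item (4) has the right overall shape (a universal upper bound plus a matching lower bound for $s$-codewords), but both halves as described would fail. For the upper bound: overwriting $X$ on a computable set of density $H^{-1}(1-t)$ with a computable pattern does not force the dimension down to $t$; it only caps the complexity density at $1-H^{-1}(1-t)>t$, and it may not lower it at all (take $t=0$, $s=1/2$, $X=0^\omega\oplus R$ with $R$ random and the overwrite set the even positions: nothing changes). The $t\leq 1-H(c)$ case genuinely needs block-wise replacement by centers of $H^{-1}(1-t)n$-covering codes, which is exactly GMSW's Theorem 3.3 and is simply cited in the paper. More seriously, your fixed partition $A,B$ for $t>1-H(c)$ tacitly assumes that the information in $X$ is spread evenly, so that the untouched part contributes ``complexity $s$ per unit density.'' But $\dim(X)=s$ is only a $\liminf$; the complexity of an arbitrary $X\in\A_s$ can be concentrated on any prescribed set of positions (again $0^\omega\oplus R$), so overwriting a fixed-density set need not lower the $\liminf$ below $s$. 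The paper's Theorem~\ref{thm:exists} avoids this by working adaptively: it chooses sparse lengths $\ell_j$ with $K(X\restriction\ell_j)\leq(s+1/j)\ell_j$, inserts the low-complexity covering-code blocks in a window of computed length $m_j$ immediately after $\ell_j$, breaks that window into chunks so the density of changes stays even (else the Besicovitch $\limsup$ blows up when $t$ is near $1-H(c)$), and finally upgrades $\dim(Z)\leq t$ to $\dim(Z)=t$ via $\Mix$ and the intermediate value theorem---a step your plan omits entirely.

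For the lower bound, your complexity inequality points the wrong way: reconstructing $Y\restriction n$ from $X\restriction n$ plus an $H(\delta)n$-bit error pattern gives an \emph{upper} bound on $K(Y\restriction n)$ and cannot force $K(Y\restriction n)/n$ above anything. The paper's Theorem~\ref{thm:forall} reconstructs $X$ from $Y$ cheaply: the key is the second clause of Lemma~\ref{lem:joe} (the codes are \emph{well distributed}: any Hamming ball of radius $q\geq r$ contains at most about $V(n,q)/V(n,r)$ centers), so $X\restriction I_i$ is specified among the centers near $Y\restriction I_i$ by only $\max\{0,(H(d_i)-(1-s))\}\,i+o(i)$ bits. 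A naive $H(d_i)i$-bit error description only reproduces the trivial bound $\delta\geq H^{-1}(s-t)$ of Proposition~\ref{prop:lower_bound}, not $\Worst(s,t)$; and to pass from per-chunk distances $d_i$ to the Besicovitch $\limsup$ one needs the increasing concave envelope $f$ of Lemma~\ref{lem:calculus} together with the fact that codewords have packing dimension $s$. Your sketches for (2) and (3) also do not work as stated: perturbing a Bernoulli $H^{-1}(s)$-random on an independently chosen density-$H^{-1}(s-t)$ set does not lower its dimension to $t$ (flips raise dimension, and zeroing at that cost leaves dimension strictly above $t$); the paper instead replaces each chunk by a nearby center of a Vereshchagin--Vit\'anyi covering of the Hamming ball of radius $\rho(X\restriction I_n)n$ (Lemma~\ref{lem:VV}, Proposition~\ref{prop:Bernoulli_dec-dim}). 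Likewise, writing fresh random bits on a density-$H^{-1}(t-s)$ set of positions adds at most $H^{-1}(t-s)<t-s$ to the dimension; the paper takes $Y=X\symdiff Z$ with $Z$ Bernoulli $H^{-1}(t-s)$-random relative to $X$ and proves $\dim(Y)\geq t$ by recovering $X$ (hence $Z$) from $Y$ using the same ball-counting property (Proposition~\ref{prop:codeword_inc-dim}).
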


The improvements to the second and third items are straightforward, while the fourth item is the main theorem of the present paper. In Figures~\ref{fig:decrease_dim_to_1/2} and~\ref{fig:decrease_dim_from_1/2}, we visualize items (3) and (4) from two perspectives (fixing a target dimension versus fixing a starting dimension).

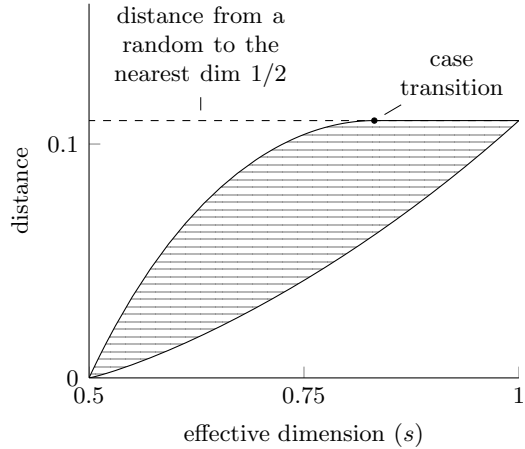
\begin{figure}[hp]
\begin{tikzpicture}
\begin{axis}[
	xlabel = {effective dimension ($s$)},
	ylabel = {distance},
	ymin = 0, ymax = 0.16, ytick = {0,0.1},
	xmin = 0.5, xmax = 1, xtick = {.5,.75,1}, 
	label style={font=\footnotesize},
	ticklabel style = {font=\footnotesize},
	x = 4.5in,
	y = 12.25in,
	axis lines* = left,
	]

	\addplot[name path = A] table[header = false] {Mathematica/Worst_s-0.5.txt};

	\addplot[name path = B] table[header = false] {Mathematica/Best_0.5.txt};

	\addplot[dashed] {0.110028} node[pos=0.563, pin={[text width=66pt, align=center, pin edge={solid, black}, pin distance=5pt] 90:{\footnotesize distance from a\\random to the\\nearest dim 1/2\\}}] {};

	\addplot[only marks, mark size = 1] coordinates {(0.831832,0.110028)} node[pin={[text width=39pt, align=center, pin edge=black, pin distance=3pt] 45:{\footnotesize case\\transition\\}}]{};

	\addplot[pattern = {Lines[angle=-45,distance={4pt/sqrt(2)}]}, pattern color=gray] fill between[of = A and B];

\end{axis}
\end{tikzpicture}
\caption{The distance from a dimension $s\geq 1/2$ sequence to the nearest dimension $1/2$ sequence. (Note that the vertical axis has been scaled up.)}
\label{fig:decrease_dim_to_1/2}
\end{figure}

\begin{figure}[hbpt]
\begin{tikzpicture}
\begin{axis}[
	xlabel = {effective dimension ($t$)},
	ylabel = {distance},
	ymin = 0, ymax = 0.5, ytick = {0,.25,.5},
	xmin = 0, xmax = 0.5, xtick = {0,.25,.5}, 
	label style={font=\footnotesize},
	ticklabel style = {font=\footnotesize},
	x = 4.5in,
	y = 4in,
	axis lines* = left
	]

	\addplot[name path = A] table[header = false] {Mathematica/Worst_0.5-s.txt};

	\addplot[name path = B] table[header = false] {Mathematica/Best_0.5.txt};

	\addplot[dashed] table[header = false] {Mathematica/Best_1.txt} node[pos=0.48, pin={[text width=64pt, align=center, pin edge={solid, black}, pin distance=8pt] 90:{\footnotesize distance from a\\random to the\\nearest dim $t$\\}}] {};

	\addplot[only marks, mark size = 1] coordinates {(0.127571,0.292893)} node[pin={[text width=39pt, align=center, pin edge=black, pin distance=3pt] 45:{\footnotesize case\\transition\\}}]{};

	\addplot[pattern = {Lines[angle=45,distance={4pt/sqrt(2)}]}, pattern color=gray] fill between[of = A and B];

\end{axis}
\end{tikzpicture}
\caption{The distance from a dimension 1/2 sequence to the nearest dimension $t\leq 1/2$ sequence.}
\label{fig:decrease_dim_from_1/2}
\end{figure}
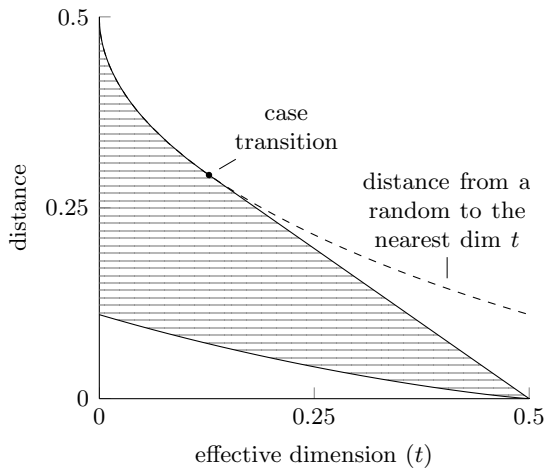

The case transition in Theorem~\ref{thm:main_thm}(4) reveals an interesting phenomenon: there is a dimension $s$ sequence $X$ (e.g., an $s$-codeword) such that if $t$ is sufficiently small compared to $s$, then $X$ is as far away from $\A_{\leq t}$ as it is possible to be, i.e., as far away as a random sequence. Such an $X$ must code its information very redundantly, at least in the sense that it is hard to erase a large amount of the information. On the other hand, thanks to the second case, we see that a very small density of changes can always erase \emph{some} information, i.e., lower the dimension at least a little. (See the discussion after Theorem~\ref{thm:dec-dim} for more on this, including how the situation for finite strings contrasts with what we have for infinite sequences.)

Having characterized the maximum and minimum possible values of $d(X,t)$ 
for $X$ of dimension $s$, it is natural to ask if all intermediate values are 
realized. We show that they are.
\begin{thm}\label{thm:intermediate} Fix $s,t \in[0,1]$.
\begin{enumerate}
\item If $t\geq s$, then for every $r \in [H^{-1}(t-s), H^{-1}(t)-H^{-1}(s)]$, there is 
an $X$ with $\dim(X) = s$ such that $d(X,t) = r$.
\item If $t \leq s$, then for every $r \in [H^{-1}(s-t), \Worst(s,t)]$, there is an $X$ 
with $\dim(X) = s$ such that $d(X,t) = r$.
\end{enumerate}
\end{thm}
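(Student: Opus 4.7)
We prove Theorem~\ref{thm:intermediate} by an intermediate value theorem argument in the Besicovitch pseudometric. By Proposition~\ref{prop:lower_bound} and the triangle inequality, the function $X \mapsto d(X, \A_t)$ is $1$-Lipschitz on $2^\omega$:
\[
|d(X, \A_t) - d(Y, \A_t)| \leq d(X, Y).
\]
It therefore suffices to construct, in each case of Theorem~\ref{thm:main_thm}, a Lipschitz family $\{X_\alpha\}_{\alpha \in [0,1]} \subseteq \A_s$ whose endpoints are Besicovitch-equivalent to the extremal witnesses in that case. Applying the IVT to $\alpha \mapsto d(X_\alpha, \A_t)$ then yields every intermediate value.

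Let $X_{\min}$ and $X_{\max}$ denote extremal witnesses from Theorem~\ref{thm:main_thm}, realising respectively the minimum and maximum values of $d(X, \A_t)$; in all cases these are either a Bernoulli $H^{-1}(s)$-random or an $s$-codeword. To interpolate between them, fix a partition of $\omega$ into consecutive blocks $B_0 < B_1 < \cdots$ whose lengths grow slowly to infinity, and for each $\alpha \in [0,1]$ set
\[
X_\alpha(n) = \begin{cases} X_{\min}(n) & \text{if $n$ lies among the first $\lfloor \alpha |B_k|\rfloor$ positions of its block $B_k$,}\\ X_{\max}(n) & \text{otherwise.}\end{cases}
\]
Note that $X_0 = X_{\max}$ and $X_1 = X_{\min}$. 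For $\alpha \leq \beta$, the sequences $X_\alpha$ and $X_\beta$ disagree only on positions where the assignment has flipped from $X_{\max}$ to $X_{\min}$, a set of asymptotic density at most $\beta - \alpha$; hence $d(X_\alpha, X_\beta) \leq |\beta - \alpha|$. Combined with the Lipschitz property of $d(\cdot, \A_t)$, the intermediate value theorem then delivers every value in $[d(X_{\min}, \A_t), d(X_{\max}, \A_t)]$.

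The main obstacle is verifying that $\dim(X_\alpha) = s$ for all $\alpha \in (0,1)$. The naive upper bound---expressing $X_\alpha\restriction n$ in terms of the full prefixes $X_{\min}\restriction n$ and $X_{\max}\restriction n$---only yields $\dim(X_\alpha) \leq 2s$. Instead, one must describe $X_\alpha\restriction n$ using only the $\alpha n + o(n)$ bits of $X_{\min}$ and $(1-\alpha)n + o(n)$ bits of $X_{\max}$ that actually appear, and bound the complexity of those sub-selections by $\alpha s n + o(n)$ and $(1-\alpha) s n + o(n)$ respectively. For a Bernoulli $H^{-1}(s)$-random this follows from an effective Shannon--McMillan--Breiman-style estimate, and we verify the analogous statement for $s$-codewords directly from their defining coding structure.

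For the lower bound $\dim(X_\alpha) \geq s$, we arrange that the two extremal witnesses be Kolmogorov-independent by relativising the construction of one to the other. A symmetry of information argument then guarantees that the joint prefix complexity of the two sub-selections is the sum of their individual complexities up to an $o(n)$ error, and so totals $sn + o(n)$ on the infinite set of scales where both extremal witnesses attain their dimension. With $\dim(X_\alpha) = s$ thus established for every $\alpha \in (0,1)$, the intermediate value argument completes the proof. Identifying $X_{\min}$ and $X_{\max}$ with the extrema dictated by each of the two cases of the theorem produces the intervals stated.
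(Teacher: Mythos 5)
Your overall strategy is the same as the paper's: interpolate between the two extremal witnesses (an $s$-codeword and a Bernoulli $H^{-1}(s)$-random relativized to it), observe that $X\mapsto d(X,\A_t)$ is $1$-Lipschitz, and apply the intermediate value theorem, with the real work being the claim that every interpolant has dimension exactly $s$. But your interpolation scheme has a genuine gap precisely at that claim. You split \emph{within} each block $B_k$, taking the first $\lfloor\alpha|B_k|\rfloor$ positions from one witness and the rest from the other. To get $\dim(X_\alpha)\leq s$ you then need the \emph{sub-selection} of the $s$-codeword appearing in $X_\alpha\restriction n$ (about $(1-\alpha)n$ bits) to have complexity at most $(1-\alpha)sn+o(n)$, and you assert this can be ``verified directly from the defining coding structure.'' It cannot: Definition~\ref{def:codeword} only constrains whole chunks, namely $X\restriction I_i\in C^{i}_{r_i}$, which bounds $K(X\restriction I_i)$ by $si+O(\log i)$ but says nothing about how that complexity is distributed inside the chunk. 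A partial chunk of length $\beta i$ may a priori have complexity as large as $\min(\beta,s)\,i+O(\log i)$, which exceeds $s\beta i$; the codes of Lemma~\ref{lem:joe} are obtained by exhaustive search subject only to covering and ball-intersection conditions, so nothing prevents their elements from concentrating information in one part of the string. The same missing bound also breaks your lower-bound argument, since the symmetry-of-information computation forces each conditional term to be tight only when every term has the correct proportional \emph{upper} bound. Note also that with block lengths ``growing slowly to infinity'' the blocks are eventually shorter than the chunks $I_j$ (chunk $j$ has length $j$, so near position $N$ the chunks have length about $\sqrt{2N}$), so your split points necessarily land inside chunks; the problem is not a removable edge effect.

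The repair is exactly what the paper's $\Mix$ construction is designed for: interpolate \emph{chunk by chunk}, assigning each whole chunk $I_j$ to the codeword or to the random according to a selector sequence $b(r)$ of density $r$ (Definition~\ref{defn:Mix}). Then every codeword chunk used in $X_r$ is intact, so its complexity is controlled above by code membership and the Bernoulli chunks are controlled by their density, while the lower bound follows from the symmetry-of-information argument you sketch, carried out against the relativized random (this is Lemma~\ref{lem:mix-dimension}); continuity of $r\mapsto X_r$ in the Besicovitch pseudometric (Lemma~\ref{lem:Mix_cts}) then lets the intermediate value argument go through as you intend. Your Lipschitz observations and the relativization of one witness to the other are fine; the only defect is the within-block splitting and the unproved complexity bound for partial codeword chunks that it forces you to rely on.
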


These examples are obtained simply by interpolating between a 
Bernoulli $H^{-1}(s)$-random and an $s$-codeword in an appropriate 
way, but how much of each to include in the interpolation is not made 
explicit in the proof. We strengthen the above result in the special 
case when $t=1$ as follows.

\begin{thm} \label{thm:intermediate_precise}
Fix $s<1$ and $r \in [H^{-1}(1-s), 1/2-H^{-1}(s)]$. Let $Y$ be an arbitrary sequence with $\dim(Y) = 1$. Then there is a sequence $X$ with $\dim(X) = s$, $d(X,1) = r$, and $d(X,Y) = r$.
\end{thm}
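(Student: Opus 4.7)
My plan is to construct $X$ by a blockwise interpolation between two extreme constructions relative to $Y$: an $s$-codeword close to $Y$ (realizing $r = H^{-1}(1-s)$) and a Bernoulli $H^{-1}(s)$-random for which $Y$ is a nearest dimension $1$ point (realizing $r = 1/2 - H^{-1}(s)$). Mixing these two modes with length-weighted proportion $\alpha$ of the random mode realizes any $r = \alpha(1/2 - p) + (1-\alpha) H^{-1}(1-s)$ in the given range, where $p = H^{-1}(s)$.

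Pick $n_k \to \infty$ with $n_{k-1} = o(n_k)$ and set $B_k = [n_{k-1}, n_k)$. Since $\dim Y = 1$ is equivalent to $K(Y \uhr n)/n \to 1$, we get $K(Y \uhr B_k)/|B_k| \to 1$, which forces the empirical frequency of $1$s in $Y \uhr B_k$ to approach $1/2$. Label each $B_k$ as either ``codeword'' or ``random'' so that the length-weighted density of the ``random'' labels tends to $\alpha$. On codeword blocks, let $X \uhr B_k$ be a closest codeword, in a good covering code of size $2^{s|B_k|}$ and covering radius $H^{-1}(1-s)|B_k| + o(|B_k|)$, to $Y \uhr B_k$; such codes exist by the standard greedy/probabilistic construction. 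On random blocks, let $Z$ be a Bernoulli $2p$-random sequence relative to $Y$ and put $X(i) = Y(i) \cdot Z(i)$ for $i \in B_k$. Balancedness of $Y \uhr B_k$ then yields a block $X \uhr B_k$ of weight near $p|B_k|$ and Hamming distance $\approx (1/2 - p)|B_k|$ from $Y \uhr B_k$, and $Y \uhr B_k$ is precisely the optimal ``Bernoulli-$1/2$ completion'' of $X \uhr B_k$, obtained by flipping $0$-bits with conditional probability $(1/2 - p)/(1 - p)$.

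Verifying $\dim X = s$ and $d(X, Y) = r$ is then routine by per-block counting and averaging along the $n_k$. For $d(X, \A_1) \geq r$ (whence $d(X, \A_1) = r$ and $Y$ is a nearest witness), I would argue blockwise: any $Y^\ast \in \A_1$ also has $K(Y^\ast \uhr B_k)/|B_k| \to 1$, so its empirical density on $B_k$ is near $1/2$. On a random block, the density gap $|\mathrm{wt}(X \uhr B_k) - \mathrm{wt}(Y^\ast \uhr B_k)| \leq d_H(X \uhr B_k, Y^\ast \uhr B_k)$ already forces $d_H \geq (1/2 - p)|B_k| - o(|B_k|)$. On a codeword block, a blockwise version of Proposition~\ref{prop:lower_bound} comparing $K(X \uhr B_k) \leq s|B_k| + o(|B_k|)$ with $K(Y^\ast \uhr B_k) \geq |B_k| - o(|B_k|)$ yields $d_H \geq H^{-1}(1-s)|B_k| - o(|B_k|)$. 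Summing with the prescribed weights gives $d(X, Y^\ast) \geq r$. The main obstacle I anticipate is verifying that the random-mode coupling $X(i) = Y(i) \cdot Z(i)$ produces a block $X \uhr B_k$ of complexity $\geq s|B_k| - o(|B_k|)$ (not merely $\leq s|B_k| + o(|B_k|)$ from counting), which is what is needed to get $\dim X \geq s$; this requires a careful Kolmogorov-complexity argument leveraging the randomness of $Z$ relative to $Y$ to show that $X \uhr B_k$ is typical among weight-$p|B_k|$ strings.
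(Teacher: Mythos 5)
Your construction cannot realize the intermediate values of $r$, because of your choice of block scale. If $n_{k-1}=o(n_k)$ and $B_k=[n_{k-1},n_k)$, then $|B_k|/n_k\to 1$, so the running length-weighted proportion of ``random'' labels at the prefix $n_k$ is within $o(1)$ of the indicator of the current block's label; it oscillates between $0$ and $1$ unless all but finitely many blocks carry the same label, so it cannot tend to any $\alpha\in(0,1)$. Worse, the Besicovitch distance is a limsup over prefixes: at the end of a random block, the contribution of the past is only $n_{k-1}=o(n_k)$, while on $B_k$ itself the thinning changes about $\bigl(\tfrac12-H^{-1}(s)\bigr)|B_k|$ bits, so if there are infinitely many random blocks then $d(X,Y)\geq \tfrac12-H^{-1}(s)$, and if there are only finitely many then $d(X,Y)\leq H^{-1}(1-s)$; no strictly intermediate $r$ is achieved. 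You are caught in a real tension: the huge blocks are exactly what give you $K(Y\uhr B_k)/|B_k|\to 1$ (hence balancedness of $Y$, and of any competitor $Y^*\in\A_1$, on each block), but they destroy the averaging. The paper instead mixes on chunks $I_j$ of length $j=o(n_j)$ via $\Mix$ and the density-$r$ sequence $b(r)$, so prefix distances genuinely average; since a dimension-$1$ sequence need not be balanced or incompressible on such small chunks, all verifications are then carried out at the aggregate level: the density of $Y$ (or of a competitor $Z$) over the union of erase chunks, and a two-case compression argument showing that any $Z$ with $d(X,Z)<r$ has $\dim(Z)<1$, either because $X\symdiff Z$ is sparse on the union of center chunks or because $Z$ has low density on the union of erase chunks. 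Your blockwise lower-bound argument (per-block density gap on random blocks, per-block incompressibility on codeword blocks) does not survive the passage to small chunks, precisely because an arbitrary $Y^*\in\A_1$ controls neither its density nor its complexity chunk by chunk.

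Separately, the step you flag as ``the main obstacle''---that the thinned blocks have complexity at least $s|B_k|-o(|B_k|)$, and more generally that the interpolated sequence has dimension at least $s$---is the technical heart of the theorem and is not supplied in your proposal. Per-block lower bounds would not suffice anyway, since the complexity of a concatenation is not bounded below by the sum of the complexities of its parts; the paper handles this globally, by symmetry of information applied to $K(Y\uhr n_j, B\uhr m(n_j))$, where $B$ is the Bernoulli $2H^{-1}(s)$-random relative to $Y$ used for the thinning, together with the ternary entropy identity $1+\tfrac12 H(2p)-(1-p)H\bigl(\tfrac{1/2-p}{1-p}\bigr)=s$ for $p=H^{-1}(s)$ (Lemma~\ref{lem:ternary_entropy_identity}). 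Your two extreme modes themselves are essentially the paper's $X_0$ and $X_1$ (codewords near $Y$, and $Y$ thinned by a Bernoulli $2H^{-1}(s)$-random relative to $Y$), so the endpoint cases are fine; the gaps are the interpolation mechanism and the dimension lower bound.
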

Furthermore, this $X$ is obtained by a precise interpolation between 
an $s$-codeword and a Bernoulli $H^{-1}(s)$-random. It would be 
interesting to know whether all of Theorem~\ref{thm:intermediate} can 
be strengthened in a similar way, starting from a Bernoulli $H^{-1}(s)$-random
in the $t\geq s$ case, and starting from an $s$-codeword in the $t\leq s$ case.

Results relating to Theorem~\ref{thm:main_thm} have been obtained by Posobin and Shen \cite{PS:19}. They study how the dimension of a finite string or an infinite sequence is affected by random perturbations; in particular, they show that a random perturbation of a sequence (of dimension less than $1$) almost surely has higher dimension.

The proofs of the above theorems can be found as follows. Theorem~\ref{thm:main_thm}(2) is proved by combining Proposition~\ref{prop:codeword_inc-dim} and \cite[Proposition 3.5]{GMSW:18}. Theorem~\ref{thm:main_thm}(3) is proved by combining Proposition~\ref{prop:Bernoulli_dec-dim} and \cite[Proposition 3.5]{GMSW:18}. Theorem~\ref{thm:main_thm}(4) is proved in Theorem~\ref{thm:dec-dim}. Slight refinements of Theorems~\ref{thm:intermediate} and Theorem~\ref{thm:intermediate_precise} are proved in Theorems~\ref{thm:interpolate} and~\ref{thm:optimal_s_distance_dimH_1_realized} respectively.

\section{Preliminaries}
\label{sec:prelim}

We denote the set of natural numbers by $\omega$. We use \emph{sequence} for an infinite binary sequence (i.e., an element of $2^\omega$). If $X \in 2^\omega$ and $n \in \omega$, $X\restriction n$ denotes the initial segment of $X$ with length $n$. If $I$ is an interval with endpoints in $\omega$, let $|I|$ denote the number of integers in $I$. Let $X\restriction I$ denote the binary string $\sigma$ of length $|I|$ defined by $\sigma(i) = X(\min(I \cap \omega)+i)$. If $\sigma$ and $\tau$ are finite binary strings, we denote their concatenation by $\sigma \tau$ or $\sigma \concat \tau$. For $i = 0$ or $1$, $i^\omega$ is the infinite binary sequence with constant value $i$.

\subsection*{Hamming distance and the Besicovitch pseudometric}

The \emph{density} $\rho(\sigma)$ of $\sigma \in 2^n$ is the number of 1's in $\sigma$ divided by its length $n$. For $X \in 2^\omega$, the \emph{density} $\rho(X)$ is $\lim_{n \to \infty} \rho(X\restriction n)$, if the limit exists. For $\sigma,\tau \in 2^n$, the \emph{Hamming distance} $\Delta(\sigma,\tau)$ is the number of positions where $\sigma$ and $\tau$ differ, and the \emph{normalized Hamming distance} $d(\sigma,\tau)$ is $\Delta(\sigma,\tau)/n$. 

As defined above, the \emph{Besicovitch pseudometric} is given by
\[
d(X,Y) = \limsup_{n \to \infty} \frac{\Delta(X\restriction n,Y\restriction n)}{n} = \limsup_{n \to \infty} d(X\restriction n,Y\restriction n),
\]
for $X,Y \in 2^\omega$. The induced topological space is complete (see \cite{BFK:97} for a proof). Note that the associated equivalence relation (where $X$ and $Y$ are equivalent if $d(X,Y) = 0$) has been studied from the point of view of Borel equivalence relations \cite{O:06}.

\subsection*{Entropy, Hamming balls, and covering codes}

The \emph{(binary) entropy} function $H\colon [0,1] \to [0,1]$ is defined by
\[ H(p) = -p\log_2 p - (1-p)\log_2(1-p) \quad \text{and} \quad H(0) = H(1) = 0. \]
(Henceforth, we omit the base of $2$ when writing $\log$.) 
Intuitively, $H(p)$ represents the amount of information that can be encoded in a string of density $p$; by equation~\eqref{eqn:V(n,r)_basic_bound} below, there are approximately $2^{H(p)n}$ many strings in $2^n$ with density $p$. Note that $H$ is differentiable on $(0,1)$, $H(1/2) = 1$, $H$ is strictly increasing on $[0,1/2]$, and $H$ is symmetric about $p = 1/2$.

For each $\sigma \in 2^n$ and $r \leq n$, the \emph{Hamming ball $B_r(\sigma)$ with radius $r$} is the set of $\tau \in 2^n$ such that $\Delta(\sigma,\tau) \leq r$. Let $V(n,r)$ denote the size of a Hamming ball of radius $r$ in $2^n$. We note the following useful bound on $V(n,r)$ when $r\leq \frac{1}{2}n$ (see \cite[Cor.\ 9, p.\ 310]{ MS:77}): 
\begin{equation} \label{eqn:V(n,r)_basic_bound}
H\!\left(\frac{r}{n}\right)n - O(\log n)\leq \log(V(n,r))\leq H\!\left(\frac{r}{n}\right)n.
\end{equation}

By a \emph{code of length $n$} we mean a subset $C\subseteq 2^n$. We will refer to elements of $C$ as \emph{centers}, though this is not standard. For $r \leq n$, we say that the code $C\subseteq 2^n$ has \emph{covering radius at most $r$} if for every $\tau\in 2^n$, there is a $\sigma\in C$ such that $\Delta(\tau,\sigma)\leq r$, i.e., $\bigcup_{\sigma \in C} B_r(\sigma) = 2^n$. In this case, we say that $C$ is an \emph{$r$-covering code}.

Let $K(n,r)$ be the minimum size of an $r$-covering code of length $n$. There is an easy lower bound for $K(n,r)$: each Hamming ball of radius $r$ only covers $V(n,r)$ strings, so obviously $K(n,r)\geq 2^n/V(n,r)$. Delsarte and Piret~\cite{DP:86} proved the existence of fairly efficient $r$-covers; they gave an upper bound for $K(n,r)$ that is, for our purposes, quite close to the simple lower bound (see also~\cite[Theorem~12.1.2]{CHLL:97} for a proof).

\begin{thm}[Delsarte and Piret~\cite{DP:86}] \label{thm:DP}
For any $n\geq 1$ and $r\leq n$, there is an $r$-covering code $C\subseteq 2^n$ such that
\[
|C| = \left\lceil \ln(2)n\frac{2^n}{V(n,r)}\right\rceil.
\]
Therefore,
\begin{equation} \label{eqn:K(n,r)_basic_bound}
\frac{2^n}{V(n,r)}\leq K(n,r)\leq \left\lceil \ln(2)n\frac{2^n}{V(n,r)}\right\rceil.
\end{equation}
\end{thm}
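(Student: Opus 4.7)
The plan is to apply the probabilistic method. I set $m = \lceil \ln(2)\, n \cdot 2^n/V(n,r) \rceil$ and draw $m$ centers $\sigma_1,\ldots,\sigma_m$ independently and uniformly at random from $2^n$, defining the random code $C = \{\sigma_1,\ldots,\sigma_m\}$. For any fixed $\tau \in 2^n$, each $\sigma_i$ lies in $B_r(\tau)$ with probability exactly $V(n,r)/2^n$, since $|B_r(\tau)| = V(n,r)$ by the translation symmetry of Hamming distance. Hence by independence,
\[
\Pr[\tau \text{ uncovered by } C] = \left(1 - \frac{V(n,r)}{2^n}\right)^m.
\]
Assuming the nontrivial case $V(n,r) < 2^n$ (if $V(n,r) = 2^n$, any singleton is an $r$-covering code), the strict inequality $(1-x)^m < e^{-mx}$ for $x \in (0,1)$ and $m \geq 1$, together with our choice of $m$, gives
\[
\Pr[\tau \text{ uncovered}] < e^{-m V(n,r)/2^n} \leq e^{-n \ln 2} = 2^{-n}.
\]

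Next I would apply linearity of expectation: the expected number of $\tau \in 2^n$ left uncovered by $C$ is strictly less than $2^n \cdot 2^{-n} = 1$. Since this count is a nonnegative integer, some realization of the random process leaves zero $\tau$ uncovered, witnessing an $r$-covering code of size $m$. This proves the upper bound in equation~\eqref{eqn:K(n,r)_basic_bound} and establishes the existence claim. The lower bound $K(n,r) \geq 2^n/V(n,r)$ is immediate: each ball $B_r(\sigma)$ contains only $V(n,r)$ strings, so covering all of $2^n$ requires at least $2^n/V(n,r)$ centers.

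I do not expect a serious obstacle. The only subtlety is that we need the expected uncovered count to be \emph{strictly} below $1$, not merely at most $1$; this is precisely why one uses the strict form $(1-x)^m < e^{-mx}$ in the nontrivial range. As an alternative, one could instead sample a slightly smaller code, bound the expected number of uncovered strings by some quantity, and then adjoin those uncovered strings as extra centers, incurring a small additive cost; the probabilistic calculation above is chosen to make this correction unnecessary and to match the stated constant $\ln 2$.
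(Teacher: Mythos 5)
Your argument is correct, and it is essentially the standard Delsarte--Piret argument: the paper itself does not reprove Theorem~\ref{thm:DP} (it cites \cite{DP:86} and \cite{CHLL:97}), but it runs the same probabilistic method in-house to prove the stronger Lemma~\ref{lem:joe}. The only mechanical differences are that the paper samples a random code of fixed size \emph{without} replacement (so the coverage failure probability for a fixed $\tau$ is a ratio of binomial coefficients, bounded by $(1-V(n,r)/2^n)^S$) and concludes via a union bound giving failure probability below $1/2$, whereas you sample $m$ centers independently \emph{with} replacement and use a first-moment argument (expected number of uncovered strings strictly below $1$); both yield the same constant $\ln 2$, and your handling of the strictness via $(1-x)^m < e^{-mx}$ is fine. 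Two cosmetic points: since you sample with replacement, your $C$ a priori has size \emph{at most} $m$, so to match the stated exact size $|C| = \bigl\lceil \ln(2)\,n\,2^n/V(n,r)\bigr\rceil$ you should pad with arbitrary extra centers (harmless for the covering property, and all that is needed for \eqref{eqn:K(n,r)_basic_bound} is a code of size at most $m$); and in the degenerate case $r=0$, $n\geq 2$ the exact-size claim cannot hold at all because $\lceil \ln(2)\,n\,2^n\rceil > 2^n$ --- this is a quirk of the statement rather than of your proof, and it is precisely why the paper's Lemma~\ref{lem:joe} excludes $r=0$ in its footnote.
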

The proof of this result is nonconstructive, but of course, once we know that such a cover exists, we can effectively (if not efficiently) find one via an exhaustive search.

\subsection*{Kolmogorov complexity and effective dimension}

We assume familiarity with Kolmogorov complexity for finite binary strings as well as Martin-L\"of randomness for infinite binary sequences; Downey and Hirschfeldt~\cite{DH:10} is a good reference that also includes a chapter on effective dimension.

The prefix-free (Kolmogorov) complexity of a finite string $\sigma$ is denoted by $K(\sigma)$. If $\tau$ is another finite string, we write $K(\sigma,\tau)$ for $K(\langle \sigma,\tau \rangle)$, where $\langle \cdot,\cdot \rangle\colon (2^{<\omega})^2 \to 2^{<\omega}$ is a fixed computable pairing function. The conditional complexity of $\sigma$ given $\tau$ is denoted by $K(\sigma \cond \tau)$. For an oracle $X$, let $K^X(\sigma)$ denote the prefix-free Kolmogorov complexity of $\sigma$ relative to $X$.

The \emph{dimension} of $\sigma$ is $\dim(\sigma) = K(\sigma)/|\sigma|$. For $X \in 2^\omega$, we define $\dim^X(\sigma)$ analogously. As stated previously, the \emph{(effective Hausdorff) dimension} of $X\in 2^\omega$ is $\dim(X) = \liminf_{n \to \infty} K(X\restriction n)/n$. The \emph{(effective) packing dimension} of $X$ is $\dim_P(X) = \limsup_{n \to \infty} K(X\restriction n)/n$. If $\dim(X) = \dim_P(X) = s$, we say that $X$ has \emph{dimension exactly $s$}.

Recall that for each $t \in [0,1]$, we let $\mathcal{A}_t \subseteq 2^\omega$ be the set of sequences of dimension $t$. It follows from Proposition~\ref{prop:lower_bound} that each $\mathcal{A}_t$ is closed with respect to the topology induced by $d$.

\subsection*{\boldmath Bernoulli \fix{$p$}{p}-random sequences}

For each $p \in [0,1]$, the \emph{Bernoulli $p$-measure} $\mu_p$ is defined by specifying that for $\sigma\in 2^{<\omega}$, the measure of the subbasic open set $[\sigma] = \{X\in 2^\omega : \sigma\text{ is a prefix of }X\}$ is
\[ p^{|\{n: \sigma(n) = 1\}|}(1-p)^{|\{n: \sigma(n)=0\}|}. \]
A sequence is said to be \emph{Bernoulli $p$-random} 
 if it passes all $\mu_p$-Martin-L\"of tests.  
Recall that for a probability measure $\lambda$ on $2^\omega$, a 
$\lambda$-Martin-L\"of test is a uniformly $\lambda$-c.e.\ sequence of 
open sets $\langle U_n\rangle_{n\in \omega}$ such that 
$\lambda(U_n) \leq 2^{-n}$.  Equivalently, the bound $2^{-n}$ can be 
replaced with any computable bound that approaches 0 as $n$
approaches infinity.  A sequence $X$ passes the test if 
$X \not\in \bigcap_{n \in \omega} U_n$.  So,
a sequence $X$ is not Bernoulli $p$-random if and only if there is a
$\mu_p$-c.e.\ sequence of open sets $U_n$ where 
$X \in \bigcap_{n\in \omega} U_n$ and where we have a computable 
bound on $\mu_p (U_n)$ that goes to zero in the limit.  Note that $\mu_p \equiv_T p$.\footnote{Interestingly, Kjos-Hanssen \cite{KH10} proved that if $X$ is not a Bernoulli $p$-random sequence, then $X \in \bigcap_{n\in\omega} U_n$ for a uniformly 
c.e.\ sequence $\langle U_n \rangle_n$ with $\mu_p(U_n) \leq 2^{-n}$.  That
is, the tests do not actually need the $\mu_p$ oracle in order 
to capture all non-Bernoulli-$p$-randoms.}

Martin-L\"of was the first person to consider a pointwise notion of Bernoulli $p$-randomness, in the same paper \cite{ML66} in which he defined 
 what is now called a Martin-L\"of random sequence.  We refer the reader 
 to \cite{CP19} and its references for a contemporary review. We need just a couple of facts. It is, of course, well-known that Bernoulli $p$-random sequences have
density $p$. It seems to be folklore that they have effective dimension $H(p)$, where $H$ is the entropy function defined above. For the reader's convenience we prove both facts, particularly as we 
require a slightly stronger, if routine, version of the density claim.

We use Hoeffding's inequality, a standard method to bound the 
probability of tail events in repeated Bernoulli trials. 
A special case of Hoeffding's inequality 
states that if $x_0,\dots,x_{j-1}$
is a sequence of $\{0,1\}$-valued independent random variables
with $E[x_i]=p$, then
$$P\Biggl(\Biggl|\frac{1}{j}\sum_{i<j} x_i - p\Biggr| > \varepsilon\Biggr) < 2e^{-2\varepsilon^2 j}.$$

\begin{prop}\label{prop:bernoulli-1}
Suppose that $Z$ is a Bernoulli $p$-random and let $\varepsilon>0$.
Let $K_j$ be a computable 
sequence of intervals with $|K_j| = cj \pm o(j)$, where $c>0$.
Then for all but finitely many $j$, we have that 
the density of $Z\uhr K_j$ is within $\varepsilon$ of $p$. 
In particular, $\lim_{n\rightarrow\infty} \rho(Z\uhr n) = p$.
\end{prop}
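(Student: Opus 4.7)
The plan is to build a $\mu_p$-Martin-L\"of test that catches exactly those sequences whose densities on the $K_j$'s stray from $p$ by more than $\varepsilon$ infinitely often. For each $j$, let
\[
V_j = \{Z \in 2^\omega : |\rho(Z\uhr K_j) - p| > \varepsilon\}.
\]
Because $\langle K_j \rangle_j$ is computable, $V_j$ is clopen and uniformly computable in $j$; in particular, $\langle V_j \rangle_j$ is uniformly $\mu_p$-c.e. Applying the form of Hoeffding's inequality quoted just before the proposition to the $|K_j|$ independent Bernoulli$(p)$ coordinates $Z(i)$ for $i \in K_j$ gives
\[
\mu_p(V_j) < 2e^{-2\varepsilon^2 |K_j|}.
\]

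Since $|K_j| = cj \pm o(j)$ with $c > 0$, we have $|K_j| \geq cj/2$ for all sufficiently large $j$, so $\mu_p(V_j) \leq 2e^{-\varepsilon^2 c j}$ for $j \geq j_0$, where $j_0$ is some computable index. Define $U_n = \bigcup_{j \geq n} V_j$; this is uniformly $\mu_p$-c.e., and for $n \geq j_0$,
\[
\mu_p(U_n) \leq \sum_{j \geq n} 2 e^{-\varepsilon^2 c j},
\]
which is a computable function of $n$ that tends to $0$. By the equivalent reformulation of Martin-L\"of tests recalled in the preliminaries (where the bound $2^{-n}$ can be replaced by any computable null sequence), $\langle U_n \rangle_{n \geq j_0}$ is a $\mu_p$-Martin-L\"of test. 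Since $Z$ is Bernoulli $p$-random, $Z \notin \bigcap_n U_n$, so some $n$ satisfies $Z \notin U_n$, which says exactly that $|\rho(Z\uhr K_j) - p| \leq \varepsilon$ for all $j \geq n$.

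The final sentence follows by specializing to $K_j = [0,j)$ (hence $c = 1$) and applying the just-proved statement with $\varepsilon = 1/k$ for each $k \geq 1$: for every $k$, $|\rho(Z\uhr j) - p| \leq 1/k$ for all sufficiently large $j$, giving $\lim_{n \to \infty} \rho(Z\uhr n) = p$. The only minor technical step is the bookkeeping that packages the Hoeffding estimate into a \emph{single} computable decreasing bound on $\mu_p(U_n)$; the $o(j)$ slack in $|K_j|$ is harmless because it only affects finitely many initial indices, whose contribution can be coarsened to $1$ without spoiling the limit of the bound.
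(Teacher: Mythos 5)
Your proof is correct and follows essentially the same route as the paper: apply Hoeffding's inequality to the events $|\rho(Z\uhr K_j)-p|>\varepsilon$ and package the tails $\bigcup_{j\geq n}$ of these events into a $\mu_p$-Martin-L\"of test with a geometric bound. The only bookkeeping the paper makes explicit that you gloss is that one should first reduce to rational $\varepsilon$ and replace $c$ by a rational $c'<c$ (with $|K_j|>c'j-M$ for all $j$) so that the sets are uniformly $p$-c.e.\ and the bounding series is genuinely computable; this is exactly the ``minor technical step'' you allude to and is easily supplied.
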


\begin{proof} It suffices to consider rational $\varepsilon$.
Let
\begin{align*}
B_j &= \{W \in 2^\omega : |\rho(W\uhr K_j) -p|>\varepsilon\}\text{, so} \\
\bigcap_{k}\bigcup_{j\geq k} B_j &= \{W \in 2^\omega : |\rho(W\uhr K_j)-p| > \varepsilon \text{ for infinitely many $j$}\}.
\end{align*}
We claim that this is a $\mu_p$-Martin-L\"of test.  The sets $\bigcup_{j\geq k} B_j$ are uniformly $p$-c.e.\ (here is where the restriction to rational 
$\varepsilon$ is used, as well as the hypothesis that the $K_j$ are 
computable).  Now we obtain a computable bound on their measure.  Hoeffding's inequality 
implies that
$$\mu_p(B_j) < 2e^{-2\varepsilon^2 |K_j|} < 2e^{-2\varepsilon^2 (c'j-M)},$$ 
where $c',M \in \mathbb Q$ are such that $0<c'<c$ and 
$|K_j| > c'j - M$ for all $j$.
Thus 
$\mu_p(\bigcup_{j\geq k} B_j)$ can be bounded above 
by $\sum_{j\geq k} 2e^{2\varepsilon^2M}(e^{-2\varepsilon^2c'})^j$, a 
geometric series whose value is a computable function of $k$.

The density of $Z$ is obtained by considering $K_j = [0,j)$.
\end{proof}

\begin{prop}\label{prop:bernoulli-2}
If $Z$ is a Bernoulli $p$-random, then
$$\dim(Z) = \dim_P(Z) = H(p).$$
\end{prop}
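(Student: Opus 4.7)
The plan is to prove the two inequalities $\dim_P(Z) \leq H(p)$ and $\dim(Z) \geq H(p)$ separately; combined with $\dim(Z) \leq \dim_P(Z)$, these yield both equalities. For the upper bound, I would describe any string $\sigma \in 2^n$ by giving its length, its number of $1$'s (using $O(\log n)$ bits), and its index among the $\binom{n}{\rho(\sigma)n}$ strings of length $n$ with the same density; since $\binom{n}{qn} \leq 2^{H(q)n}$, this gives $K(\sigma) \leq H(\rho(\sigma))n + O(\log n)$. By Proposition~\ref{prop:bernoulli-1}, $\rho(Z \uhr n) \to p$, so continuity of $H$ yields $K(Z \uhr n)/n \leq H(p) + o(1)$ and hence $\dim_P(Z) \leq H(p)$.

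For the lower bound, fix rational $\varepsilon > 0$ and let $V_k$ be the open set generated by all strings $\sigma$ with $|\sigma| \geq k$ and $K(\sigma) < (H(p)-\varepsilon)|\sigma|$. The family $\{V_k\}_k$ is uniformly c.e., so to conclude it is a $\mu_p$-Martin-L\"of test it suffices to give a computable bound on $\mu_p(V_k)$ tending to $0$. I would split the strings $\sigma$ generating $V_k$ according to whether $|\rho(\sigma) - p|$ is larger or smaller than a small rational $\delta > 0$. Strings with $|\rho(\sigma) - p| > \delta$ together contribute at most $\sum_{n \geq k} 2 e^{-2\delta^2 n}$ to $\mu_p(V_k)$ by Hoeffding's inequality. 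For $\sigma \in 2^n$ with $|\rho(\sigma) - p| \leq \delta$, the formula $\mu_p([\sigma]) = 2^{-ng(\rho(\sigma))}$, with $g(q) = -q\log p - (1-q)\log(1-p)$ linear and satisfying $g(p) = H(p)$, gives $\mu_p([\sigma]) \leq 2^{-n(H(p) - \delta C(p))}$, where $C(p) = |\log((1-p)/p)|$. Combined with the count of at most $2^{(H(p)-\varepsilon)n}$ such strings of length $n$, these contribute at most $\sum_{n \geq k} 2^{-n(\varepsilon - \delta C(p))}$. Choosing $\delta$ so that $\delta C(p) < \varepsilon/2$ makes both sums into geometric series that are computable in $k$ and tend to $0$. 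Hence $\{V_k\}$ is a $\mu_p$-Martin-L\"of test, so $Z \notin V_k$ for some $k$, yielding $K(Z\uhr n) \geq (H(p) - \varepsilon)n$ for all $n \geq k$. Letting $\varepsilon \to 0$ gives $\dim(Z) \geq H(p)$.

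The main obstacle is that $\mu_p$ is non-uniform, so the naive counting bound on strings of small complexity does not directly bound $\mu_p$-measure: some strings of length $n$ can have $\mu_p([\sigma])$ much larger than $2^{-nH(p)}$. Isolating the near-typical density range via Hoeffding, and then exploiting that the linear function $g$ is close to $H(p)$ when $q$ is close to $p$, is the key technical step.
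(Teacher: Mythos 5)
Your proof is correct and takes essentially the same route as the paper's: the upper bound via an $O(\log n)$-plus-index description of each prefix within its density class, and the lower bound via a $\mu_p$-Martin-L\"of test generated by the low-complexity strings, whose measure is controlled by combining the $2^{(H(p)-\varepsilon)n}$ count with a per-cylinder bound at near-typical densities (you fold the Hoeffding estimate for atypical densities directly into the test and thereby avoid the paper's normalization to $p<1/2$ and its appeal to Proposition~\ref{prop:bernoulli-1} in this half, but the ingredients are the same). Two small points of precision: your sets $V_k$ are uniformly $p$-c.e.\ rather than c.e., since the threshold $(H(p)-\varepsilon)|\sigma|$ requires the oracle $p$ --- exactly what the paper's definition of a $\mu_p$-Martin-L\"of test permits --- and for $p\in\{0,1\}$ your constant $C(p)$ is undefined, so one should note that the lower bound $\dim(Z)\geq H(p)=0$ is trivial there.
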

\begin{proof} We assume without loss of generality that $0<p<1/2$. If $p=0$ or $p=1/2$ the
result is trivial or well-known, and if $p>1/2$ we may 
simply consider $\overline Z$.

For any $\varepsilon>0$, the density of $Z$ implies that all but finitely 
many initial segments $Z\uhr n$ have density $p+\varepsilon$ or less.
There are only $2^{H(p+\varepsilon)n +O(\log n)}$ strings in $2^n$ of density $p+\varepsilon$ or less, so we have for all $\varepsilon>0$ that $K(Z\uhr n) \leq H(p+\varepsilon)n + O(\log n)$. As $\varepsilon$ was arbitrary, $\dim_P(Z) \leq H(p)$.

To prove that $\dim(Z) \geq H(p)$, given rational $\varepsilon>0$, suppose for the sake of contradiction that infinitely often $K(Z\uhr n) < (H(p)-\varepsilon)n$. 
Let rational $\delta>0$ be so small that $\delta \bigl|\log\bigl(\frac{p}{1-p}\bigr)\bigr| < \varepsilon/2$.
Let $B_n = \bigcup_{\sigma \in S_n} [\sigma]$, where
\[
S_n = \{ \sigma \in 2^n : K(\sigma) < (H(p)-\varepsilon)n \text{ and } \rho(\sigma) \geq (p-\delta)n\}.
\]
Then $|S_n| \leq 2^{(H(p)-\varepsilon)n}$. Also, the density 
requirement in the definition of $S_n$ guarantees that, for each $\sigma \in S_n$,
\[
\mu_p([\sigma]) \leq p^{(p-\delta)n}(1-p)^{(1-p+\delta)n},
\]
where here we have used the fact that $p<1/2$ to guarantee 
that $(1-p)$ is the larger number for the sake of specifying this bound. Therefore, 
\begin{align*}
\mu_p(B_n) &\leq |S_n|p^{(p-\delta)n}(1-p)^{(1-p+\delta)n}\\
 & \leq 2^{(H(p)-\varepsilon)n}2^{(p-\delta)n\log(p) + (1-p+\delta)n\log(1-p)}\\
 & = 2^{(-\varepsilon -\delta\log p + \delta \log(1-p))n}.
\end{align*}
By the choice of $\delta$, we have $\mu_p(B_n) \leq 2^{-\frac{\varepsilon}{2} n}$.  The $B_n$ are uniformly $p$-c.e.

Now the previous proposition guarantees that for all $n$
sufficiently large, $Z\uhr n$ has density at least $p-\delta$.
Therefore, $Z \in \bigcap_k \bigcup_{n\geq k} B_n$. But 
this is a $\mu_p$-Martin-L\"of test because $\mu_p(\bigcup_{n\geq k} B_n)$
is bounded above by a geometric series, similar to the 
previous proposition but simpler.
\end{proof}

\subsection*{Interpolating between infinite sequences}

Under the Besicovitch pseudometric, $2^\omega$ is pathwise connected. In other words, we can smoothly interpolate between two sequences $X_0, X_1\in 2^\omega$. Downarowicz and Iwanik \cite{DI:88} proved pathwise connectedness for the Weyl pseudometric and hence for the Besicovitch pseudometric, which never exceeds the Weyl pseudometric. A simpler proof is given in \cite{BFK:97}. We will explicitly construct an interpolation that works well for our purposes.

\begin{defn}
For $r \in [0,1]$, define $b(r) \in 2^\omega$ by recursion:
\[ b(r) = 
\begin{cases}
0^\omega & r = 0 \\
0^\omega \oplus b(2r) & 0 < r \leq 1/2 \\
b(2r-1) \oplus 1^\omega & 1/2 \leq r < 1 \\
1^\omega & r = 1,
\end{cases} \]
where $X \oplus Y$ is defined by $(X \oplus Y)(2n) = X(n)$ and $(X \oplus Y)(2n+1) = Y(n)$ for all $n \in \omega$. (This is identical to the function $f$ defined in \cite[p.~112]{BFK:97}.)
\end{defn}

Observe that $b(1/2) = 0^\omega \oplus 1^\omega$. For every $n \in \omega$, $b(r)(n)$ is defined at some finite stage of the above recursion. The recursion terminates if and only if $r$ is a dyadic rational.

\begin{lem}\label{lem:b-continuity}
$b\colon [0,1] \to 2^\omega$ is continuous with respect to the Besicovitch pseudometric $d$ on $2^\omega$.
\end{lem}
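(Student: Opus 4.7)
I will prove the stronger statement that $b$ is $1$-Lipschitz, namely $d(b(r), b(s)) = |r - s|$ for all $r, s \in [0, 1]$, from which continuity is immediate. The argument has three components.

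First, I would show that the density $\rho(b(r))$ exists and equals $r$. Directly from the recursion, if $r \leq 1/2$ then $b(r)\restriction 2n$ has zeros at all $n$ even positions and $b(2r)\restriction n$ at the $n$ odd positions, so $\rho(b(r)\restriction 2n) = \tfrac{1}{2}\rho(b(2r)\restriction n)$; the case $r \geq 1/2$ is symmetric. Unrolling $k$ times gives
\[ \rho(b(r) \restriction 2^k n) \;=\; r + 2^{-k}\bigl(\rho(b(r^{(k)})\restriction n) - r^{(k)}\bigr), \]
where $r^{(k)}$ is the result of applying the appropriate choice of doubling map $k$ times. Since both $\rho(\cdot)$ and $r^{(k)}$ lie in $[0,1]$, this yields $|\rho(b(r)\restriction 2^k n) - r| \leq 2^{-k}$; writing an arbitrary $m = 2^k n + l$ with $l < 2^k$ and adjusting by $O(2^k/m)$ extends the bound to all $m$, giving $\rho(b(r)) = r$.

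Next I would derive three identities for the Besicovitch distance by direct counting on initial segments of even length (the limsup over all $n$ and over even $n$ agree since they differ by $O(1/n)$). When $r, s \in [0, 1/2]$, $b(r)$ and $b(s)$ agree on every even position and disagree at $2i+1$ iff $b(2r)(i) \neq b(2s)(i)$, yielding $d(b(r),b(s)) = \tfrac{1}{2} d(b(2r), b(2s))$; symmetrically, if $r, s \in [1/2, 1]$, we obtain $d(b(r),b(s)) = \tfrac{1}{2} d(b(2r-1), b(2s-1))$. When $r \leq 1/2 \leq s$, the number of mismatches among the first $2n$ positions is exactly $|\{i < n : b(2s-1)(i) = 1\}| + |\{i < n : b(2r)(i) = 0\}|$, which by the first step converges to $(s - \tfrac{1}{2}) + (\tfrac{1}{2} - r) = s - r$ after dividing by $2n$; hence $d(b(r),b(s)) = s - r$.

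Finally, I would iterate. For $r < s$ in $[0,1]$, as long as the pair lies in the same half of the unit interval, one of the first two identities applies, doubling $s - r$ and contributing a factor $\tfrac{1}{2}$ to the accumulated distance. Since $s - r$ cannot exceed $\tfrac{1}{2}$ while the pair remains in a common half, the third case must trigger after at most $K = \lceil \log_2(1/(2(s-r))) \rceil$ steps. At that point the accumulated factor $2^{-K}$ exactly cancels the expansion $2^K(s - r)$, giving $d(b(r), b(s)) = s - r$; combined with the trivial case $r = s$, this proves $d(b(r), b(s)) = |r - s|$ and hence continuity. The main technical subtlety is that the Besicovitch pseudometric uses a limsup rather than a limit, so the third identity in Step 2 truly requires the densities of $b(2r)$ and $b(2s-1)$ to exist as limits (not merely as one-sided bounds on limsup/liminf); this is precisely what Step 1 secures.
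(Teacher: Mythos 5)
Your proof is correct, but it takes a genuinely different route from the paper's. The paper proves only what is needed: by induction on $k$ it shows that if $x_0,x_1$ lie in a common dyadic interval $[m/2^k,(m+1)/2^k]$ then $d(b(x_0),b(x_1))\le 2^{-k}$, the inductive step being exactly your first two identities $d(b(x_0),b(x_1))=\tfrac12\,d(b(2x_0),b(2x_1))$ and its mirror; continuity follows, and the fact that $\rho(b(r))=r$ is then deduced \emph{afterwards}, for general $r$, from this continuity. You instead compute the distance exactly, showing that $b$ is an isometric embedding, $d(b(r),b(s))=|r-s|$. To do this you must establish $\rho(b(r))=r$ directly, without appealing to continuity, and your unrolled recursion does this correctly (it also subsumes the paper's subsequent density lemma as a by-product). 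The extra ingredient beyond the paper's argument is your third identity: in the straddling case $r\le 1/2\le s$ the mismatch count has an actual limit, equal to $s-r$, precisely because the densities of $b(2r)$ and $b(2s-1)$ exist, and your doubling bound guarantees this case is reached after finitely many steps; the degenerate values are harmless since $b(0)=0^\omega\oplus b(0)$ and $b(1)=b(1)\oplus 1^\omega$, so the recursion identities persist there. The trade-off is clear: the paper's induction is shorter and already gives a modulus of continuity sufficient for every later use, while your argument costs a direct density computation but yields strictly more information, namely that $b$ is $1$-Lipschitz and in fact isometric.
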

\begin{proof}
We shall prove by induction on $k$ that for every $x_0$ and $x_1$ in the dyadic interval $[m/2^k,(m+1)/2^k]$, we have $d(b(x_0),b(x_1)) \leq 2^{-k}$. It would then follow that $b$ is continuous on $[0,1]$.

The base case $k = 0$ is immediate. For the inductive step, suppose that $x_0$ and $x_1$ lie in $[m/2^{k+1},(m+1)/2^{k+1}]$. Consider first the case where this interval lies in $[0,1/2]$. Then $b(x_i) = 0^\omega \oplus b(2x_i)$ (even if $x_i = 0$ or $1/2$), and $[m/2^k,(m+1)/2^k]$ is a dyadic interval containing $2x_0$ and $2x_1$. So
\begin{align*}
d(b(x_0),b(x_1)) &= d(0^\omega \oplus b(2x_0),0^\omega \oplus b(2x_1)) \\
&= d(b(2x_0),b(2x_1))/2 \\
&\leq 2^{-k}/2,
\end{align*}
by the inductive hypothesis. The case where $[m/2^{k+1},(m+1)/2^{k+1}] \subseteq [1/2,1]$ can be handled similarly. This completes the induction.
\end{proof}

\begin{lem}
For every $r \in [0,1]$, the density of \textup{1'}s in $b(r)$ is $r$.
\end{lem}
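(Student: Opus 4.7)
The plan is to establish the density of $b(r)$ in two stages: first for dyadic rationals, where the recursion defining $b$ terminates in finitely many steps, and then for arbitrary $r$ by a Besicovitch-approximation argument built on Lemma~\ref{lem:b-continuity}.

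For a dyadic rational $r = m/2^k$ with $0 \leq m \leq 2^k$, I would prove by induction on $k$ that $b(m/2^k)$ is periodic of period $2^k$ with exactly $m$ ones per period. The base case $k = 0$ is immediate from $b(0) = 0^\omega$ and $b(1) = 1^\omega$. For the inductive step, observe that interleaving adds ``ones per period'' and doubles the period: if $X$ is $L$-periodic with $a$ ones per period and $Y$ is $L$-periodic with $b$ ones per period, then $X \oplus Y$ is $2L$-periodic with $a+b$ ones per period. If $m \leq 2^{k-1}$, apply this to the recursion $b(m/2^k) = 0^\omega \oplus b(m/2^{k-1})$, with $0^\omega$ viewed as $2^{k-1}$-periodic with $0$ ones per period; if $m \geq 2^{k-1}$, use the symmetric recursion involving $1^\omega$. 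In either case, the density of $b(m/2^k)$ exists and equals $m/2^k$.

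For arbitrary $r \in [0,1]$, fix $k \geq 1$ and choose $m$ with $r \in [m/2^k,(m+1)/2^k]$. The estimate proved inside Lemma~\ref{lem:b-continuity} yields $d(b(r), b(m/2^k)) \leq 2^{-k}$. Since $|\rho(\sigma) - \rho(\tau)| \leq d(\sigma,\tau)$ for any finite strings $\sigma,\tau$ of equal length (a single bit flip changes each count of $1$'s by at most one), and since $\rho(b(m/2^k) \restriction n) \to m/2^k$ as $n \to \infty$ by the first stage, both the upper and lower densities of $b(r)$ lie within $2^{-k}$ of $m/2^k$, and hence within $2 \cdot 2^{-k}$ of $r$. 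Letting $k \to \infty$ shows that the density of $b(r)$ exists and equals $r$.

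The only subtlety worth flagging, and the reason a naive argument does not suffice, is that one cannot simply conclude $\rho(b(r)) = r$ from the recursion $b(r) = 0^\omega \oplus b(2r)$ by ``averaging the densities of the two interleaved streams,'' since that step presupposes $b(2r)$ already has a well-defined density. The dyadic-plus-continuity route sidesteps this circularity: combinatorics handles a dense set of parameters directly, and the Besicovitch continuity of $b$ then transfers the convergence of finite-prefix densities to all $r$ simultaneously.
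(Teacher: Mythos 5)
Your proof is correct and follows essentially the same route as the paper: an induction on $k$ establishing that $b(m/2^k)$ is $2^k$-periodic with $m$ ones per period, followed by transfer to arbitrary $r$ via the Besicovitch continuity of $b$ (Lemma~\ref{lem:b-continuity}). The only difference is that you spell out the details the paper leaves implicit, including the useful observation that naively averaging densities through the recursion would be circular.
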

\begin{proof}
If $r$ is dyadic, we can prove the desired result by induction. The result for general $r$ follows from the previous lemma.
\end{proof}

For each $j$, define $n_j = \sum_{i<j} i$. We refer to each interval of the form $[n_j,n_{j+1})$ as a \emph{chunk}, and denote the $j$th chunk by $I_j$. Now we define a way to interpolate between two sequences, while keeping chunks together.

\begin{defn} \label{defn:Mix}
For $X_0,X_1 \in 2^\omega$ and $r \in [0,1]$, define $\Mix(X_0,X_1,r) \in 2^\omega$ by
\[ \Mix(X_0,X_1,r) \restriction I_j = X_{b(r)(j)} \restriction I_j. \]
\end{defn}

Since $b(0)(j) = 0$ and $b(1)(j) = 1$ for all $j \in \omega$, we have $\Mix(X_0,X_1,0) = X_0$ and $\Mix(X_0,X_1,1) = X_1$.

\begin{lem} \label{lem:Mix_cts}
$\Mix(X_0,X_1,\cdot)\colon [0,1] \to 2^\omega$ is continuous with respect to the Besicovitch pseudometric $d$ on $2^\omega$.
\end{lem}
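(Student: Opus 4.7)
The plan is to reduce continuity of $\Mix(X_0,X_1,\cdot)$ to the continuity of $b$ (Lemma~\ref{lem:b-continuity}) by establishing a Lipschitz-style inequality of the form
$$d(\Mix(X_0,X_1,r), \Mix(X_0,X_1,r')) \leq 2\, d(b(r), b(r'))$$
for all $r, r' \in [0,1]$. Once this is in place, given $\varepsilon > 0$ we can choose $\delta > 0$ so that $|r-r'|<\delta$ forces $d(b(r),b(r'))<\varepsilon/2$, and the inequality delivers continuity.

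To obtain the inequality, I would first observe from Definition~\ref{defn:Mix} that $\Mix(X_0,X_1,r)$ and $\Mix(X_0,X_1,r')$ agree on chunk $I_j$ whenever $b(r)(j) = b(r')(j)$, so all possible bit-level disagreements are confined to chunks indexed by $D = \{j : b(r)(j) \neq b(r')(j)\}$. Since $|I_j| = n_{j+1} - n_j = j$ and $n_N = N(N-1)/2$, for any $n$ with $n_N \leq n < n_{N+1}$ the prefix disagreement is bounded by
$$\Delta(\Mix(X_0,X_1,r)\uhr n,\, \Mix(X_0,X_1,r')\uhr n) \leq \sum_{j \in D \cap [0,N+1)} j \leq (N+1)\cdot |D \cap [0,N+1)|,$$
where the last step just uses $j \leq N+1$ term by term. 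Dividing by $n \geq n_N$ and passing to the limsup as $n\to\infty$ (so $N \to \infty$) yields the claimed bound, with the factor of $2$ coming from the ratio $(N+1)/(N(N-1)/2)$ in the limit.

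The only conceptually subtle point is the potential blow-up from chunk-level disagreements to bit-level disagreements: because $|I_j| = j$ grows linearly, a single flip at chunk index $j$ contributes up to $j$ bit-flips. Fortunately the cumulative length $n_N$ grows quadratically in $N$, so the very coarse bound $j \leq N+1$ on each summand is enough to absorb this blow-up into the harmless constant $2$ (and any constant would suffice for continuity). No delicate combinatorial estimate is needed; the geometry of the quadratic chunk sizes does the work.
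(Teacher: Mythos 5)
Your proposal is correct and is essentially the paper's own argument: the paper likewise reduces to Lemma~\ref{lem:b-continuity} by showing the chunk-expansion map $p \mapsto c(p)$ with $c(p)\restriction I_j = X_{p(j)}\restriction I_j$ is Lipschitz with constant $2$ for the Besicovitch pseudometric, using exactly the comparison of linear chunk sizes against the quadratic growth of $n_j$. (Your attribution of the constant $2$ to the ratio $(N+1)/(N(N-1)/2)$ is worded loosely---one must also normalize $|D\cap[0,N+1)|$ by $N+1$, giving the asymptotic factor $2(N+1)^2/(N(N-1))\to 2$---but the estimate itself is right.)
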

\begin{proof}
By Lemma~\ref{lem:b-continuity}, it suffices to show that the function $c\colon 2^\omega \to 2^\omega$ defined by $c(p)\restriction I_j = X_{p(j)} \restriction I_j$ is continuous with respect to the Besicovitch pseudometric. Suppose that $d(p_0,p_1)\leq \delta$. If $n\in I_{j-1}$, then we have 
\begin{align*}
\frac{\Delta(c(p_0) \restriction n, c(p_1)\restriction n)}{n} &\leq (j-1)\;\frac{\Delta(p_0 \restriction j, p_1\restriction j)}{n} \\
&\leq 2\;\frac{\Delta(p_0 \restriction j, p_1\restriction j)}{j-2},
\end{align*}
because $n\geq n_{j-1} = (j-2)(j-1)/2$. Therefore, $d(c(p_0),c(p_1)) \leq 2 \delta$.
\end{proof}

\begin{lem}\label{lem:mix-density}
For any $r \in [0,1]$, the density of \textup{1'}s in $\Mix(0^\omega,1^\omega,r)$ is $r$.
\end{lem}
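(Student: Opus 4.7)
The plan is to compute the limiting density of $1$'s directly, leveraging the density claim already established for $b(r)$ itself in the preceding lemma. Unpacking Definition~\ref{defn:Mix}, on chunk $I_j$ (which has length $|I_j| = j$) the sequence $\Mix(0^\omega, 1^\omega, r)$ is constant with value $b(r)(j)$, so the number of $1$'s in $\Mix(0^\omega,1^\omega,r)\uhr n_k$ is exactly
\[ T_k := \sum_{j<k} j \cdot b(r)(j), \]
where $n_k = \sum_{j<k} j = k(k-1)/2$. The goal reduces to showing $T_k / n_k \to r$ and then handling positions between $n_k$ and $n_{k+1}$.

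The main technical step is passing from the uniformly weighted density of $b(r)$ to the linearly weighted sum $T_k$. Let $S_k = \sum_{j<k} b(r)(j)$; by the previous lemma, $S_k/k \to r$. Summation by parts gives
\[ T_k = (k-1) S_k - \sum_{j=1}^{k-1} S_j. \]
Dividing by $n_k = k(k-1)/2$, the first term is $2 S_k / k \to 2r$. For the second, a routine Ces\`aro estimate---writing $S_j = rj + o(j)$ and using $\sum_{j<k} j = k(k-1)/2$---gives $\sum_{j=1}^{k-1} S_j = r k(k-1)/2 + o(k^2)$, so the second term tends to $r$. Altogether, $T_k / n_k \to 2r - r = r$.

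Finally, I will extend convergence from the subsequence $(n_k)$ to all $n$. For $n \in [n_k, n_{k+1})$, both the $1$-count and $n$ itself differ from $T_k$ and $n_k$ by at most $|I_k| = k$, which is $o(n_k)$ since $n_k \sim k^2/2$; so the full density limit exists and equals $r$. There is no real obstacle here---the only nontrivial ingredient is the Abel-summation manipulation, which is entirely routine once the correct weighting is identified.
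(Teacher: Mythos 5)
Your proof is correct, but it takes a different route from the paper's. The paper handles dyadic $r$ first, exploiting that $b(r)$ is then periodic with period $2^k$ and counting the 1's of $\Mix(0^\omega,1^\omega,r)$ over blocks of $2^k$ consecutive chunks, and then passes to arbitrary $r$ by continuity of $r\mapsto\Mix(0^\omega,1^\omega,r)$ in the Besicovitch pseudometric (a limit-exchange step that implicitly needs a small squeeze argument to see the density limit exists for non-dyadic $r$). You instead work uniformly in $r$: you take the already-proved fact that $\rho(b(r))=r$, i.e.\ $S_k/k\to r$ with $S_k=\sum_{j<k}b(r)(j)$, and convert this unweighted density into the chunk-length-weighted count $T_k=\sum_{j<k}j\,b(r)(j)$ via Abel summation, $T_k=(k-1)S_k-\sum_{j=1}^{k-1}S_j$, together with the Ces\`aro estimate $\sum_{j<k}S_j=r\,k(k-1)/2+o(k^2)$; the passage from the subsequence $(n_k)$ to all $n$ is handled correctly since $|I_k|=k=o(n_k)$. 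Your identity and limits check out (note $|I_j|=j$, $n_k=k(k-1)/2$, so $2S_k/k\to 2r$ and the Ces\`aro term tends to $r$, giving $r$). What each approach buys: yours avoids the dyadic case analysis and any appeal to continuity of $\Mix$, at the cost of the summation-by-parts bookkeeping; the paper's proof reuses the same dyadic-plus-continuity template it already set up for $b(r)$ and for $\Mix$, keeping all the analytic content in those earlier lemmas. Either argument suffices for the later applications of the lemma.
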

\begin{proof}
We prove the result for dyadic $r$, and for the remaining $r$ 
it follows by continuity. If $r=m/2^k$ is dyadic, then the process of creating $b(r)$ terminates, and $b(r)$ is periodic with period $2^k$. Furthermore, each interval of $b(r)$ of length $2^k$ contains exactly $m$ many 1's. Now, letting $Z = \Mix(0^\omega,1^\omega,r)$, consider $Z\uhr I$ where $I$ is any interval of the form $[n_j,n_{j+2^k})$, so that this interval contains exactly $2^k$ chunks with the smallest chunk sized $j$ and the largest sized $j+2^k-1$. Letting $\ell$ be the number of 1's in $Z\uhr I$, we have 
$mj < \ell < m(j+2^k)$. We also know that $2^kj < |I| < 2^k(j+2^k)$, so
\[
\frac{mj}{2^kj+2^k} < \frac{\ell}{|I|} < \frac{m(j+2^k)}{2^kj}.
\]
Thus as $j$ increases, the density of $Z\uhr I$ approaches $r$. Furthermore,
$Z$ is a concatenation of such intervals, where each interval's length is 
negligible compared to the accumulation of intervals that came before.
Therefore, the density of $Z$ is $r$.
\end{proof}

\section{Well-distributed covering codes and \fix{$s$}{s}-codewords}
\label{sec:covering}

For our purposes, the existence of a fairly efficient $r$-covering code of length $n$ is not quite enough. Consider a Hamming ball $B_q(\tau)$ in $2^n$. How many centers from an optimal $r$-cover $C$ should we expect to be in $B_q(\tau)$? Since each $\sigma$ is in exactly $V(n,q)$ balls of radius $q$, each center has a probability of $V(n,q)/2^n$
to be in a randomly chosen Hamming ball of radius $q$. Therefore, averaging over all $\tau$, we should expect $B_q(\tau)$ to contain
\[
K(n,r)\frac{V(n,q)}{2^n} \approx \frac{V(n,q)}{V(n,r)}
\]
centers from $C$, where the approximation is idealized, but informative. Of course, this is an average, so a given ball $B_q(\tau)$ might contain more centers. We want to choose our covering codes so they do not have \emph{too many} more centers in any ball. This is because we want to be able to bound the amount of information needed to recover a center that has been moved by a Hamming distance of (at most) $q$. The following lemma tells us that sufficiently nice covering codes exist.

We will use two simple estimates for binomial coefficients. First, we have the well-known upper bound
\[
\binom{a}{b} < \bigg(\frac{ea}{b}\bigg)^b\text{, as long as }b\geq 1.
\]
Also note that as long as $a\leq b$, then
\[
\frac{a-1}{b-1}\leq \frac{a}{b}\text{, and hence }\left.\binom{a}{c}\middle/\binom{b}{c}\right.\leq \bigg(\frac{a}{b}\bigg)^c.
\]

\begin{lem}\label{lem:joe}
For any $n\geq 1$ and $0< r\leq n$\footnote{The case $r=0$ is excluded because the only $0$-covering code is $C=2^n$, which is smaller than promised by the first condition. Note that it still satisfies the second condition of the theorem.}, there is an $r$-covering code $C\subseteq 2^n$ such that
\[
|C| = S \mathrel{\mathop:}= \left\lceil \ln(2)(n+1)\frac{2^n}{V(n,r)}\right\rceil = 2^{(1-H(r/n))n+O(\log n)}.
\]
Furthermore, for every $q \in [r,n]$ and every $\tau\in 2^n$, we have
\[
|B_q(\tau)\cap C|< S_q \mathrel{\mathop:}= \left\lceil 5(n+1)\frac{V(n,q)}{V(n,r)}\right\rceil.
\]
\end{lem}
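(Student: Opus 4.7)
The plan is to use the probabilistic method: I would select $C$ uniformly at random from all $S$-element subsets of $2^n$ and show that with positive probability $C$ is simultaneously an $r$-covering code and satisfies the well-distribution bound.

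For the covering condition, I would fix $\tau \in 2^n$ and bound the probability that $C \cap B_r(\tau) = \emptyset$ by $\binom{2^n - V(n,r)}{S}/\binom{2^n}{S} \leq (1 - V(n,r)/2^n)^S \leq e^{-SV(n,r)/2^n}$, which by the definition of $S$ is at most $2^{-(n+1)}$. A union bound over the $2^n$ candidates for $\tau$ would then yield that $C$ fails to cover with probability at most $1/2$. This essentially reproduces Theorem~\ref{thm:DP}, but with the slightly inflated constant $n+1$ (rather than $n$) in $S$, which leaves headroom for the second condition.

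For the well-distribution condition, I would fix $\tau \in 2^n$ and $q \in [r,n]$, let $X = |B_q(\tau) \cap C|$, and realize $C$ as the unordered image of $(c_1, \ldots, c_S)$ sampled uniformly without replacement from $2^n$. A union bound over the $\binom{S}{S_q}$ many $S_q$-subsets of coordinates then gives
\[
P(X \geq S_q) \leq \binom{S}{S_q} \prod_{i=0}^{S_q - 1}\frac{V(n,q) - i}{2^n - i} \leq \binom{S}{S_q}\left(\frac{V(n,q)}{2^n}\right)^{S_q} \leq \left(\frac{eSV(n,q)}{S_q\, 2^n}\right)^{S_q},
\]
where the last step uses $\binom{S}{S_q} \leq (eS/S_q)^{S_q}$ from the preliminaries. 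After substituting the definitions of $S$ and $S_q$, the base of the final expression is bounded by $e\ln(2)/5 + o(1)$, which lies well below $1/2$ for large $n$. Since $S_q \geq 5(n+1)$, this decays fast enough that a union bound over the at most $2^n(n+1)$ pairs $(\tau, q)$ contributes at most $1/2$ to the total failure probability, for all sufficiently large $n$. The corner case $V(n,r) \leq 5(n+1)$ is handled trivially, because then $S_q \geq V(n,q)$ and the distribution bound is automatic.

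Combining the two estimates proves existence of a $C$ as described, and the asymptotic identity $|C| = 2^{(1-H(r/n))n + O(\log n)}$ is then immediate from equation~\eqref{eqn:V(n,r)_basic_bound}. The main obstacle will be verifying that the constants cooperate: the factor $5$ in the definition of $S_q$, as opposed to the natural expectation constant $\ln 2$, is precisely what provides the slack needed to absorb both the $e$ arising from $\binom{S}{S_q} \leq (eS/S_q)^{S_q}$ and the $2^n$ factor in the union bound.
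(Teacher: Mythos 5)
Your proposal is correct and follows essentially the same route as the paper's proof: a uniformly random size-$S$ code, the estimate $(1-V(n,r)/2^n)^S \le 2^{-(n+1)}$ plus a union bound over $\tau$ for the covering property, and the bound $\binom{S}{S_q}\left(V(n,q)/2^n\right)^{S_q} \le \left(eS\,V(n,q)/(S_q 2^n)\right)^{S_q}$ plus a union bound over the pairs $(\tau,q)$ for the well-distribution property. The only difference is in the constant bookkeeping: the paper absorbs the ceiling in $S$ via $S < 2\widehat{S}$ and the numerical fact $\left(2e\ln 2/5\right)^{5/2} < 1/2$, which yields failure probability $< 1/2 + (n+1)2^{-n-2} < 1$ for every $n \ge 1$, whereas your $e\ln 2/5 + o(1)$ estimate establishes the lemma only for all sufficiently large $n$ (which is all the applications use, but the statement as written claims every $n \ge 1$, so the small cases would still need the paper's tighter handling or a direct check).
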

\begin{proof}
It is easy to see that the two expressions for $S$ are equal using equation~\eqref{eqn:V(n,r)_basic_bound} from Section~\ref{sec:prelim}.
We will use the probabilistic method, as is essentially used in~\cite{DP:86}, though they frame it slightly differently. The idea is to show that if we randomly choose a cover $C$ of size $S$, then with positive probability it has the desired properties.

We begin by bounding the probability that $C$ is \emph{not} an $r$-covering code. For a fixed $\tau\in 2^n$,
\[
P(\Delta(\tau,C)>r) = \left.\binom{2^n-V(n,r)}{S}\middle/\binom{2^n}{S}\right. \leq \left(1-\frac{V(n,r)}{2^n}\right)^S.
\]
Using the fact that $(1-1/x)^x < 1/e$ for all $x>1$, and by the choice of $S$,
\[
P(\Delta(\tau,C)>r) < e^{-\ln(2)(n+1)} = 2^{-n-1}.
\]
The probability of a union of events is at most the sum of their respective probabilities, hence the probability that $C$ is not an $r$-cover is
\[
P((\exists\tau)\; \Delta(\tau,C)>r) < 2^n\cdot 2^{-n-1} = 1/2.
\]

Next, we bound the probability that there are too many centers from $C$ in some Hamming ball. Fix $q\geq r$. Also fix a $\tau\in 2^n$. Note that
\begin{align*}
P(|B_q(\tau)\cap C|\geq S_q) &\leq \left.\binom{S}{S_q}\binom{V(n,q)}{S_q}\middle/\binom{2^n}{S_q}\right. \leq \binom{S}{S_q}\left(\frac{V(n,q)}{2^n}\right)^{S_q} \\
	&\leq \left(\frac{eS}{S_q}\right)^{S_q}\left(\frac{V(n,q)}{2^n}\right)^{S_q}.
\end{align*}
Let $\widehat{S}$ be the expression that we take the ceiling of to get $S$ and note that $\widehat{S}\geq 2\ln 2 > 1$. Thus we have $S < \widehat{S}+1 < \widehat{S}+\widehat{S} = 2\widehat{S}$. Also note that $S_q\geq 5(n+1)$, so
\begin{align*}
P(|B_q(\tau)\cap C|\geq S_q) &\leq \left(\frac{eS}{S_q}\cdot\frac{V(n,q)}{2^n}\right)^{S_q} < \left(\frac{2e\widehat{S}}{S_q}\cdot\frac{V(n,q)}{2^n}\right)^{S_q} \\
	&\leq \left(\frac{2e\ln(2)(n+1) 2^n}{V(n,r)}\cdot \frac{V(n,r)}{5(n+1)V(n,q)}\cdot \frac{V(n,q)}{2^n}\right)^{S_q} \\
	&= \left(\frac{2e\ln(2)(n+1)}{5(n+1)}\right)^{S_q} \leq \left(\frac{2e\ln 2}{5}\right)^{5(n+1)} < 2^{-2(n+1)},
\end{align*}
where we use the fact that $\left(2e\ln(2) / 5\right)^{5/2} < 1/2$.

Again, using the na\"\i ve bound on the probability of a union of events,
\[
P((\exists\tau)\; |B_q(\tau)\cap C|\geq S_q) < 2^n\cdot 2^{-2n-2} = 2^{-n-2}.
\]
There are $n+1$ possible values of $q$; adding the probabilities of all the ways that $C$ can fail to satisfy our requirements we have
\[
P(C\text{ does not satisfy the lemma}) < 1/2 + (n+1) 2^{-n-2},
\]
which is less than $1$ for $n\geq 1$. Therefore, a randomly chosen code $C$ has a positive probability of satisfying the lemma, and hence there exists a code that does so.
\end{proof}

Since the properties of $C$ asserted by Lemma~\ref{lem:joe} are computably recognizable, we may fix a computable sequence $\langle C^n_r \rangle_{n,r}$ of $r$-covering codes of length $n$ that satisfy the properties in Lemma~\ref{lem:joe}.

The centers of these codes $C^n_r$ have their information encoded very redundantly. For example, if $\sigma \in C^n_r$, then every $\tau$ within distance $r$ of $\sigma$ has $K(\tau) \geq K(\sigma) \pm o(n)$. This is because, by the second part of Lemma~\ref{lem:joe}, we know that $|B_r(\tau) \cap C^n_r| < 5(n+1)$. So we may give a description of $\sigma$ by giving a description of $\tau$ along with $\log\left(5(n+1)\right)$ bits to pick $\sigma$ out of $B_r(\tau)\cap C^n_r$.

Given $s \in [0,1]$, we are now ready to define a class of sequences, all of which have exact dimension $s$ and (as we shall argue in Section~\ref{sec:dec-dim}) have their information coded with a maximum of redundancy.

\begin{defn} \label{def:codeword}
Let $s \in [0,1]$. We say that $X \in 2^\omega$ is an \emph{$s$-codeword} if there is a $Y\in 2^\omega$ with $\dim(Y) = 1$ and a sequence of integers $\langle r_n\rangle_{n\in \omega}$ such that $r_n = H^{-1}(1-s)n \pm o(n)$, and such that for each $n$ we have $\Delta(X\uhr I_n, Y\uhr I_n) \leq r_n$ and $X\uhr I_n \in C^n_{r_n}$.
\end{defn}

We have the following straightforward property.
\begin{prop}\label{prop:codeword-s}
Every $s$-codeword $X$ has $\dim(X) = \dim_P(X) = s$.
\end{prop}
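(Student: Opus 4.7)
The plan is to prove $\dim_P(X) \leq s$ and $\dim(X) \geq s$ separately, exploiting the two halves of Definition~\ref{def:codeword} in complementary ways; together with the trivial inequality $\dim(X) \leq \dim_P(X)$, these force $\dim(X) = \dim_P(X) = s$.

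For the upper bound, I would use only the fact that $X\uhr I_i \in C^i_{r_i}$. Since the sequence $\langle C^n_r \rangle_{n,r}$ is uniformly computable, $X\uhr I_i$ can be described by encoding $r_i$ in $O(\log i)$ bits together with its fixed-length index in $C^i_{r_i}$. By Lemma~\ref{lem:joe} and the continuity of $H$ at $H^{-1}(1-s)$, this index has length
\[ \log |C^i_{r_i}| = (1 - H(r_i/i))i + O(\log i) = si + o(i). \]
Concatenating these descriptions for $i \leq j$ (with $j$ itself prepended in $O(\log j)$ bits) gives $K(X\uhr n_{j+1}) \leq \sum_{i\leq j}(si + o(i)) + O(j\log j) = sn_{j+1} + o(n_{j+1})$, since $n_{j+1} \sim j^2/2$ and $\sum_{i\leq j} o(i) = o(j^2)$. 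For an arbitrary $n$ with $n_j \leq n < n_{j+1}$, recovering $X\uhr n$ as a prefix of $X\uhr n_{j+1}$ costs only $O(\log n)$, and the slack $n_{j+1}-n < j = O(\sqrt n)$ is absorbed into $o(n)$, so $\dim_P(X) \leq s$.

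For the lower bound, I would use the Hamming-closeness $\Delta(X\uhr I_i, Y\uhr I_i) \leq r_i$ together with $\dim(Y) = 1$. Since $Y\uhr I_i$ lies in the ball $B_{r_i}(X\uhr I_i)$ of size $V(i,r_i)$, it can be specified from $X\uhr I_i$ using $\log V(i,r_i) + O(\log i) = (1-s)i + o(i)$ bits by~\eqref{eqn:V(n,r)_basic_bound} and continuity of $H$. Summing over $i \leq j$ yields
\[ K(Y\uhr n_{j+1}) \leq K(X\uhr n_{j+1}) + (1-s)n_{j+1} + o(n_{j+1}). \]
The assumption $\dim(Y) = 1$, combined with the trivial upper bound $K(Y\uhr n) \leq n + O(\log n)$, forces $K(Y\uhr n_{j+1}) \geq n_{j+1} - o(n_{j+1})$, and rearranging gives $K(X\uhr n_{j+1}) \geq sn_{j+1} - o(n_{j+1})$. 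For arbitrary $n$ with $n_j \leq n < n_{j+1}$, truncation yields $K(X\uhr n_j) \leq K(X\uhr n) + O(\log n)$; combined with $n_j = n - O(\sqrt n)$, this gives $K(X\uhr n) \geq sn - o(n)$, hence $\dim(X) \geq s$.

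The main technical care lies in the $o$-bookkeeping, particularly in verifying that $\sum_{i\leq j} o(i) = o(n_{j+1})$ (a routine Ces\`aro-type estimate) and that the passage between the subsequence $\{n_{j+1}\}$ and arbitrary $n$ preserves the $o(n)$ error via the chunk-size asymptotic $j = O(\sqrt n)$; neither step presents any deeper obstacle.
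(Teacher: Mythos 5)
Your proof is correct. The upper bound $\dim_P(X)\le s$ is essentially the paper's own argument: index each $X\uhr I_i$ inside $C^i_{r_i}$, pay $O(\log i)$ to specify $r_i$, sum over chunks, and pass from the checkpoints $n_j$ to arbitrary $n$ using the fact that the chunk length is $O(\sqrt n)$. For the lower bound $\dim(X)\ge s$ you take a genuinely different route: you transfer complexity chunkwise from $Y$ to $X$, bounding $K(Y\uhr n_j)\le K(X\uhr n_j)+\sum_{i<j}\bigl(\log V(i,r_i)+O(\log i)\bigr)$ by indexing $Y\uhr I_i$ inside the Hamming ball $B_{r_i}(X\uhr I_i)$, and then use $K(Y\uhr n)\ge n-o(n)$ (from $\dim(Y)=1$) and rearrange. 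The paper instead observes that $\Delta(X\uhr I_i,Y\uhr I_i)\le r_i$ forces $d(X,Y)\le H^{-1}(1-s)$ and quotes Proposition~\ref{prop:lower_bound} with $\dim(Y)=1$, which gives $\dim(X)\ge s$ in one line. The two arguments are essentially equivalent in content---your ball-indexing computation is exactly the counting that underlies Proposition~\ref{prop:lower_bound}---so the paper's version is shorter, while yours is self-contained and makes explicit the chunkwise information accounting that the paper reuses later (e.g., in Proposition~\ref{prop:codeword_inc-dim} and Theorem~\ref{thm:forall}). A cosmetic difference: the paper dismisses $s=1$ as trivial and invokes differentiability of $H$ only for $s<1$, whereas your appeal to continuity of $H$ at $H^{-1}(1-s)$ suffices for all $s$. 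Your $o$-bookkeeping (Ces\`aro summation of the per-chunk errors and the $O(\sqrt n)$ slack between checkpoints) is exactly the care required, so I see no gap.
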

\begin{proof}
As the proposition is trivial when $s = 1$, we assume $s < 1$. Suppose $X$ is an $s$-codeword as witnessed by $\langle r_j\rangle_{j \in \omega}$ and $Y$. Then 
\[ d(X,Y) = \limsup_{n \to \infty}\frac{1}{n}\sum_{j<n} r_j = H^{-1}(1-s), \]
so $\dim_P(X) \geq \dim(X) \geq s$ by Proposition~\ref{prop:lower_bound}. 
Conversely, for any $j$, we may describe $X\uhr n_j$ by 
specifying, for each $i<j$, an element of $C^i_{r_i}$. By Lemma~\ref{lem:joe}, it takes $\log(2^i/V(i,r_i)) + O(\log i)$ bits to specify an element of $C^i_{r_i}$. Since $\log(V(i,r_i)) = H(r_i/i)i \pm O(\log i) = (1-s)i \pm o(i)$, the required number of bits is $si + o(i)$.
(The calculation $H(r_i/i)i = (1-s)i \pm o(i)$
follows when $s < 1$ because $H$ is differentiable on $(0,1/2]$. So when $r_i/i$ 
deviates from $H^{-1}(1-s)$ by $o(1)$, $H(r_i/i)$ 
deviates from $(1-s)$ by $o(1)$.)
We must also specify each $r_i$, since there is no computability 
restriction on that sequence, but this takes only $O(\log i)$ bits.
So we have 
\[
K(X\uhr n_j) \leq \sum_{i<j} (si + o(i)) \leq sn_j + o(j^2).
\]
Since $n_j$ grows as $j^2$, 
the second term makes no contribution in the limit, and we have 
$\dim(X) \leq s$. We also have $\dim_P(X) \leq s$, because $n_{j+1}-n_j = j+1 \ll n_j$.
\end{proof}

Now we show that starting from an $s$-codeword $X$ allows the 
dimension to be increased to $t$ using the minimum possible distance,
which is $H^{-1}(t-s)$. The strategy is to define $Y=X\symdiff Z$ where $Z$ is a Bernoulli $H^{-1}(t-s)$-random relative to $X$. 

Using the properties of Bernoulli $p$-randoms, we show that for every $s$-codeword, there is a dimension $t\geq s$ sequence at the minimum possible distance (i.e., that established by Proposition~\ref{prop:lower_bound}).

\begin{prop} \label{prop:codeword_inc-dim}
If $X$ is an $s$-codeword, then for any $t\geq s$, there is 
a sequence $Y$ with $\dim(Y) = t$ and $d(X,Y) = H^{-1}(t-s)$.
\end{prop}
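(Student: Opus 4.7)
The plan is to follow the stated strategy: let $Z$ be Bernoulli $p$-random relative to $X$, where $p = H^{-1}(t-s)$, and set $Y = X \symdiff Z$. Since $X \oplus Y = Z$, Proposition~\ref{prop:bernoulli-1} applied with $K_n = [0,n)$ immediately gives $d(X,Y) = \rho(Z) = p$. It remains to show $\dim(Y) = t$.

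For the upper bound $\dim(Y) \leq t$, I would observe $K(Y \uhr n) \leq K(X \uhr n) + K(Z \uhr n) + O(\log n)$, and use Propositions~\ref{prop:codeword-s} and~\ref{prop:bernoulli-2} to bound $K(X \uhr n) \leq sn + o(n)$ and $K(Z \uhr n) \leq H(p)n + o(n) = (t-s)n + o(n)$.

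The lower bound $\dim(Y) \geq t$ is the substantive part. By symmetry of algorithmic information, $K(Y \uhr m) \geq K(X \uhr m, Y \uhr m) - K(X \uhr m \cond Y \uhr m) - O(\log m)$. Since $Y = X \oplus Z$ and $Z$ is Bernoulli $p$-random relative to $X$, the relativized Proposition~\ref{prop:bernoulli-2} yields $K^X(Z \uhr m) = H(p)m + o(m)$, so $K(X \uhr m, Y \uhr m) = K(X \uhr m, Z \uhr m) = K(X \uhr m) + K^X(Z \uhr m) + o(m) = tm + o(m)$. It therefore suffices to prove $K(X \uhr n_j \cond Y \uhr n_j) = o(n_j)$ along the chunk-boundary subsequence; the full bound $\liminf_n K(Y \uhr n)/n \geq t$ then follows because consecutive chunk boundaries are spaced only $j = o(n_j)$ apart, so $K(Y \uhr n)$ cannot dip far below $K(Y \uhr n_j)$ for $n \in [n_j, n_{j+1})$.

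For the chunkwise bound, the $s$-codeword structure and Lemma~\ref{lem:joe} are what make it work. On chunk $I_n$, Proposition~\ref{prop:bernoulli-1} applied with $K_n = I_n$ gives $\Delta(X \uhr I_n, Y \uhr I_n) = |Z \uhr I_n|_1 = pn + o(n)$. Because $t \leq 1$ forces $p \leq H^{-1}(1-s)$, this Hamming distance is at most $r_n + o(n)$, where $r_n = H^{-1}(1-s)n + o(n)$. Setting $q_n = \max(|Z \uhr I_n|_1, r_n)$, Lemma~\ref{lem:joe} yields $|B_{q_n}(Y \uhr I_n) \cap C^n_{r_n}| \leq 5(n+1)\,V(n,q_n)/V(n,r_n) = 2^{(H(q_n/n) - H(r_n/n))n + O(\log n)} = 2^{o(n)}$, since both $q_n/n$ and $r_n/n$ converge to $H^{-1}(1-s)$. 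Hence $K(X \uhr I_n \cond Y \uhr I_n) = o(n)$ per chunk, and telescoping across $j$ chunks gives $K(X \uhr n_j \cond Y \uhr n_j) = o(n_j)$, because $n_j \sim j^2/2$ dominates $\sum_{i<j} o(i)$. The main subtlety is the borderline case $t = 1$: there $|Z \uhr I_n|_1$ may genuinely exceed $r_n$ by $o(n)$, which is why the proof needs the enlarged radius $q_n$ together with the softer $2^{o(n)}$ bound, rather than the cleaner $O(n)$ bound that the $q = r$ case of Lemma~\ref{lem:joe} would supply if $|Z \uhr I_n|_1 \leq r_n$ held strictly.
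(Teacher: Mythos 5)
Your proposal is correct and follows essentially the same route as the paper's proof: set $Y = X \symdiff Z$ for $Z$ Bernoulli $H^{-1}(t-s)$-random relative to $X$, use the well-distributedness of the codes $C^n_{r_n}$ from Lemma~\ref{lem:joe} to recover $X$ from $Y$ at $o(n)$ cost per chunk, and conclude via symmetry of information together with Propositions~\ref{prop:codeword-s} and~\ref{prop:bernoulli-2}. The only divergence is the borderline $t=1$: the paper simply notes this case is trivial (the dimension-$1$ sequence witnessing that $X$ is an $s$-codeword already works), while you absorb it into the main argument via the enlarged radius $q_n$ and the $q\geq r$ clause of Lemma~\ref{lem:joe}, which is a valid, slightly more uniform alternative.
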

\begin{proof}
First observe that if $t=1$, the proposition is trivial (using the dimension 1 sequence $Y$ which witnesses that $X$ is an $s$-codeword.) So from now on we assume that $t<1$.

Let $\langle r_j \rangle_{j \in \omega}$ be a sequence witnessing that 
$X$ is an $s$-codeword.
Let $p = H^{-1}(t-s)$, and let $Z$ be a Bernoulli $p$-random 
relative to $X$.
Let $Y = X \symdiff Z$. We claim $Y$ satisfies the proposition.
We have $d(X,Y) = p$ because $p$ is the 
density of $Z$. Therefore, $\dim(Y) \leq t$ by Proposition~\ref{prop:lower_bound}.

To show that $\dim(Y)\geq t$, fix some $\varepsilon>0$,
small enough that $p + \varepsilon < H^{-1}(1-s)$.
First we claim that
$$K(Y\uhr n_j) + o(n_j) \geq K(X\uhr n_j, Z\uhr n_j).$$
We will give a code for $X\uhr n_j$ by giving a code for 
$Y\uhr n_j$ and then concatenating $j$ codes which select 
$X\uhr n_j$ from among those elements of $C^j_{r_j}$ that
are within distance $r_j$ of $Y\uhr I_j$. 

By Lemma~\ref{lem:joe}, there are at most 
$5(j+1)$ elements of $C^j_{r_j}$ within distance $r_j$, 
so assuming that $X\uhr I_j$
is indeed among them, we can pick it out using an additional 
$o(j)$ bits. Therefore, to give all $j$ of the codes we require in total
$\sum_{i<j} o(i) \leq jo(j) \leq o(n_j)$
bits after the code for $Y\uhr I_j$.
Knowing $Y\uhr n_j$ and $X\uhr n_j$ then establishes 
the value of $Z\uhr n_j$.

Of course, if $Z\uhr I_j$ has more than $r_j$ many 1's, then
$X\uhr I_j$ would not be among those codes, so the above scheme
would fail. However,
by Proposition~\ref{prop:bernoulli-1},
for sufficiently large $j$, we have $\rho(Z\uhr I_j) < p+\varepsilon$. 
Therefore, by the choice of $\varepsilon$, for sufficiently large $j$ 
we have $\rho(Z\uhr I_j)j < r_j$. 
We may correct for finitely many $X\uhr I_j$
at the beginning using finitely much additional 
information. This proves the claim. 

Next, since $K(X\uhr n_j, Z\uhr n_j) = K(X\uhr n_j) + K(Z\uhr n_j \cond X\uhr n_j) + o(n_j)$, and since $K(Z\uhr n_j \cond X\uhr n_j) + o(n_j) \geq K^X(Z\uhr n_j)$, we know that
$$\dim(Y) \geq \dim(X) + \dim^X(Z)$$
because $\lim_j \frac{K(X\uhr n_j)}{n_j}$ and $\lim_j \frac{K^X(Z\uhr n_j)}{n_j}$
both exist (Propositions~\ref{prop:codeword-s} and~\ref{prop:bernoulli-2}, 
the latter relativized to $X$). Therefore, by those same propositions,
$$\dim(Y) \geq \dim(X) + \dim^X(Z) = s + H(p) = t.$$
as desired.
\end{proof}

\section{Decreasing dimension}
\label{sec:dec-dim}

We begin this section by proving that it is easy to decrease the dimension of a Bernoulli random. Recall from \cite[Proposition 3.1]{GMSW:18} that if $\dim(X) = s$ and $\dim(Y) = t\leq s$, then $d(X,Y) \geq H^{-1}(s-t)$. This lower bound is best possible by \cite[Proposition 3.5]{GMSW:18}. We shall improve the latter result by showing that the lower bound is realized whenever $X$ is a Bernoulli $H^{-1}(s)$-random. For this we shall need a variation of the result of Delsarte and Piret (Theorem~\ref{thm:DP}) that yields a fairly efficient $r$-cover not of all of $2^n$, but of a ball $B_q(0^n)\subseteq 2^n$ where $r \leq q \leq n$.

\begin{lem}[Vereshchagin and Vit\'anyi~{\cite[Lemma~5]{VV:10}}] \label{lem:VV}
Let $r\leq q\leq n$. Every Hamming ball of radius $q$ can be covered by at most $\alpha(n)V(n,q)/V(n,r)$ Hamming balls of radius $r$, where $\alpha(n)$ is a polynomial.
\end{lem}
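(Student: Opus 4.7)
The plan is to adapt the probabilistic method from the proofs of Theorem~\ref{thm:DP} and Lemma~\ref{lem:joe} to the restricted task of covering a single ball rather than all of $2^n$. By translation invariance of Hamming distance on $2^n$, it suffices to produce a good $r$-cover of $B_q(0^n)$.

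A first attempt is to invoke Lemma~\ref{lem:joe} directly. The $r$-covering code $C$ it produces satisfies $|C \cap B_{q+r}(0^n)| = O(n) \, V(n,q+r)/V(n,r)$, and every center $c \in C$ whose $r$-ball meets $B_q(0^n)$ must lie in $B_{q+r}(0^n)$, so $C \cap B_{q+r}(0^n)$ covers $B_q(0^n)$. This already gives the desired bound in the regime $q \geq n/2$, since then $V(n,q+r) \leq 2^n \leq 2\, V(n,q)$, so $O(n)\, V(n,q+r)/V(n,r) = O(n)\, V(n,q)/V(n,r)$.

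To sharpen the bound when $q < n/2$, I would split $B_q(0^n)$ into the core $B_{q-r}(0^n)$ and the thin annulus $A = B_q(0^n) \setminus B_{q-r}(0^n)$, and cover each separately. For the core, draw $S_1 = O(n \, V(n,q)/V(n,r))$ centers uniformly at random from $B_q(0^n)$. For any $\tau$ in the core (weight $w \leq q-r$), every $c$ within Hamming distance $r$ of $\tau$ has $|c| \leq w + r \leq q$, so $B_r(\tau) \subseteq B_q(0^n)$ and hence $|B_r(\tau) \cap B_q(0^n)| = V(n,r)$ (via the Vandermonde identity $\sum_{a+b=j}\binom{n-w}{a}\binom{w}{b} = \binom{n}{j}$). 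The probability a random center covers $\tau$ is then at least $V(n,r)/V(n,q)$, and a union bound over the $V(n,q)$ core points shows that $S_1$ centers suffice with positive probability.

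The main obstacle is covering the annulus $A$. On one hand, the crude estimate $|A| \leq r\binom{n}{q} \leq r\,V(n,q)$, valid when $q \leq n/2$, shows that $A$ is already a polynomial factor smaller than $V(n,q)$, so the target bound leaves ample room for an inefficient cover. On the other hand, naive random sampling fails in both natural directions: centers drawn uniformly from $2^n$ give only the global Delsarte--Piret bound $O(n)\,2^n/V(n,r)$, which is too large when $V(n,q) \ll 2^n$, while centers drawn from $B_q(0^n)$ have vanishing coverage probability for annulus points of weight close to $q$, since for such $\tau$ most of $B_r(\tau)$ escapes $B_q(0^n)$. My approach would be to sample centers from a shell of the form $\{c \in 2^n : q - r \leq |c| \leq q + r\}$, for which a refined Vandermonde-type calculation produces a uniform lower bound on $|B_r(\tau) \cap \text{shell}|$ over $\tau \in A$, yielding a coverage probability bounded below by $V(n,r)/\bigl(\mathrm{poly}(n)\cdot V(n,q)\bigr)$. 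Combining this with the cover of the core then produces an $r$-cover of $B_q(0^n)$ of size $\mathrm{poly}(n) \cdot V(n,q)/V(n,r)$, as required.
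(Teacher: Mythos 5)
You are judging this one on its own merits in any case: the paper does not prove Lemma~\ref{lem:VV} but imports it from Vereshchagin and Vit\'anyi, so there is no in-paper argument to compare against. Your reduction to $B_q(0^n)$, the disposal of the regime $q\ge n/2$ via Lemma~\ref{lem:joe}, and the cover of the core $B_{q-r}(0^n)$ by sampling from $B_q(0^n)$ are all correct, and you have correctly located the crux in the annulus. But the proposed annulus step has a genuine gap. If centers are drawn uniformly from the shell $Sh=\{c: q-r\le |c|\le q+r\}$, the probability that a single draw covers a fixed $\tau\in A$ is $|B_r(\tau)\cap Sh|/|Sh|\le V(n,r)/|Sh|$, and when $q<n/2$ and $r=\Theta(n)$ the shell is exponentially larger than $V(n,q)$, since it contains the sphere of weight $\min(q+r,\lfloor n/2\rfloor)$. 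So the coverage probability you claim, $V(n,r)/(\mathrm{poly}(n)\,V(n,q))$, is unattainable for this distribution no matter how the Vandermonde computation is refined, and with the allowed budget of $\mathrm{poly}(n)\,V(n,q)/V(n,r)$ draws a worst-case $\tau$ of weight $q$ remains uncovered with probability $1-o(1)$. Restricting to the lower half-shell $\{c: q-r\le|c|\le q\}$ does not rescue the plan either: for $q=n/3$, $r=n/4$, $|\tau|=q$, the half-shell has size about $2^{0.918n}$ while $|B_r(\tau)\cap\{c:|c|\le q\}|$ is about $2^{0.782n}$ (dominated by centers of weight $\approx q$, reached by flipping about $r/2$ ones and $r/2$ zeros), giving coverage probability about $2^{-0.136n}$ against the required $\binom{n}{r}/\binom{n}{q}\approx 2^{-0.107n}$. (Your aside that $|A|\le r\binom{n}{q}$ makes $A$ ``a polynomial factor smaller than $V(n,q)$'' is also backwards---for $q<n/2$ the annulus is essentially the whole ball---though the property you actually use, $|A|\le \mathrm{poly}(n)V(n,q)$, is true.)

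What is missing is that the sampling distribution must be tuned to the weight of the point being covered; a single uniform draw from a wide shell around weight $q$ is dominated by the wrong spheres. A correct route: for each $w\le q$ (with $w\ge r$; smaller $w$ are covered by $B_r(0^n)$ alone) cover the sphere $S_w$ of weight-$w$ strings separately, drawing centers uniformly from the single sphere $S_v$ with $v=\lceil n(w-r)/(n-2r)\rceil\in[w-r,w]$. By symmetry and double counting, the probability that a random $c\in S_v$ covers a fixed $\tau\in S_w$ equals $|B_r(c)\cap S_w|/\binom{n}{w}$, and for this choice of $v$ the constrained maximum of $\binom{v}{a}\binom{n-v}{b}$ over $a+b\le r$, $b-a=w-v$ is attained within $O(1)$ of the index maximizing the Vandermonde expansion of $\binom{n}{r}$ (indeed $a=rv/n$ before rounding), hence is at least $\binom{n}{r}/\mathrm{poly}(n)$. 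So each $S_w$ is covered by $\mathrm{poly}(n)\binom{n}{w}/\binom{n}{r}$ balls of radius $r$, and summing over the at most $n+1$ values of $w\le q$ gives $\mathrm{poly}(n)V(n,q)/V(n,r)$, as required.
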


\begin{prop} \label{prop:Bernoulli_dec-dim}
For any Bernoulli $H^{-1}(s)$-random $X$ and any $t<s$, there is a sequence $Y$ with $\dim(Y) = t$ and $d(X,Y) = H^{-1}(s-t)$.
\end{prop}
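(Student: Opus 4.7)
The plan is to build $Y$ chunk by chunk: each $Y \restriction I_j$ will be a center of a covering code tailored to the chunk $X \restriction I_j$. Writing $p = H^{-1}(s)$ (so $X$ has density $p$ by Proposition~\ref{prop:bernoulli-1}) and $r_j = \lceil H^{-1}(s-t)\,j \rceil$, I will let $q_j = \rho(X \restriction I_j)$ and appeal to Lemma~\ref{lem:VV} to obtain, uniformly computably in $(j, q_j j)$, an $r_j$-cover $D_j \subseteq 2^j$ of the Hamming ball $B_{q_j j}(0^j)$ of size at most $\alpha(j)\,V(j, q_j j)/V(j, r_j)$. Since $X \restriction I_j \in B_{q_j j}(0^j)$, I can choose $Y \restriction I_j$ to be some center in $D_j$ at Hamming distance at most $r_j$ from $X \restriction I_j$. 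A small caveat---Lemma~\ref{lem:VV} requires $r_j \leq q_j j$---holds for cofinitely many $j$ because $q_j \to p \geq H^{-1}(s-t)$, and the finitely many exceptional chunks can be filled in arbitrarily.

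By construction $d(X, Y) \leq H^{-1}(s-t)$. The crux is the upper bound $\dim(Y) \leq t$. I will describe $Y \restriction n_j$ by listing, for each $i < j$, the integer $q_i\,i$ (costing $O(\log i)$ bits) and the index of $Y \restriction I_i$ within $D_i$. Using the estimate \eqref{eqn:V(n,r)_basic_bound}, the index costs at most $(H(q_i) - (s - t))\,i + O(\log i)$ bits. Since $q_i \to p$ and $H$ is continuous at $p$, we have $H(q_i) \to s$, so for every $\delta > 0$ the chunk-$i$ description costs at most $(t + \delta)\,i + O(\log i)$ bits once $i$ is large enough. Summing yields $K(Y \restriction n_j) \leq (t + \delta)\,n_j + o(n_j)$, and letting $\delta \to 0$ gives $\dim(Y) \leq t$.

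To conclude, apply Proposition~\ref{prop:lower_bound} to $X$ and $Y$: this yields $d(X, Y) \geq H^{-1}(s - \dim(Y)) \geq H^{-1}(s - t)$. Equality throughout forces $d(X, Y) = H^{-1}(s - t)$ and $\dim(Y) = t$ (using that $H^{-1}$ is strictly increasing on $[0,1]$).

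The main obstacle is arranging the complexity bound to give exactly $t$ rather than $t + O(\varepsilon)$ for some positive $\varepsilon$. The naive approach of fixing a small $\varepsilon > 0$ and covering the slightly enlarged ball $B_{(p + \varepsilon)j}(0^j)$ for every chunk only yields $\dim(Y) \leq t + O(\varepsilon)$. The remedy, which drives the construction above, is to let the radius of the covered ball track the actual density $q_j$ of $X \restriction I_j$; since $q_j \to p$ by Proposition~\ref{prop:bernoulli-1}, this squeezes the slack out of the complexity bound in the limit rather than for any single fixed value of $\varepsilon$.
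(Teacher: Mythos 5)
Your proposal is correct and takes essentially the same route as the paper's proof: chunk by chunk, cover the Hamming ball of radius $\rho(X\restriction I_j)\,|I_j|$ by balls of radius about $H^{-1}(s-t)\,|I_j|$ via Lemma~\ref{lem:VV}, take a nearby center so the index costs $(H(\rho(X\restriction I_j)) - (s-t))|I_j| \pm o(|I_j|) = t|I_j| \pm o(|I_j|)$ bits (the key point in both arguments being that the covered ball's radius tracks the actual chunk density, which tends to $H^{-1}(s)$), and then pin down the exact values of $\dim(Y)$ and $d(X,Y)$ by squeezing against Proposition~\ref{prop:lower_bound}. The only difference is presentational: the paper leaves the chunkwise assembly and the final squeeze implicit, while you spell them out.
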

\begin{proof}
It suffices to show that for each $n \in \omega$, there is some $\tau \in 2^n$ such that $d(X\restriction I_n,\tau) \leq H^{-1}(s-t)$ and $\dim(\tau) \leq t + o(1)$. Given $n \in \omega$, consider the Hamming ball in $2^n$ with radius $\rho(X\restriction I_n)n$. By Lemma~\ref{lem:VV}, we can cover this ball with $\alpha(n)V(n,\rho(X\restriction I_n)n)/V(n,H^{-1}(s-t)n)$ many Hamming balls of radius $H^{-1}(s-t)n$, where $\alpha(n)$ is a polynomial. Fix a set $C$ consisting of all centers of such a set of Hamming balls. By \eqref{eqn:V(n,r)_basic_bound} in Section~\ref{sec:prelim}, we have
\begin{align*}
\log |C|  & = H(\rho(X\restriction I_n))n - H(H^{-1}(s-t))n \pm O(\log n) \\
& = H(\rho(X\restriction I_n))n - (s-t)n \pm O(\log n).
\end{align*}
Now, $H(\rho(X\restriction I_n)) = s \pm o(1)$ because $\rho(X\restriction I_n) = H^{-1}(s) \pm o(1)$ and $H$ is differentiable at $H^{-1}(s) > 0$. So $\log |C| = tn \pm o(n)$. Finally, let $\tau$ be the least string in $C$ such that $d(X\restriction I_n,\tau) \leq H^{-1}(s-t)$. By the above bound on $\log |C|$, we have $\dim(\tau) \leq t + o(1)$.
\end{proof}

In the rest of this section we address the worst case. The main result of the section, and indeed the paper, is the following.

\begin{thm}\label{thm:dec-dim} Let $0\leq t < s \leq 1$.
For any sequence $X$ of dimension $s$, there is a 
sequence $Z$ with $\dim(Z) =t$ and 
\[
d(X,Z) \leq
\begin{cases}
H^{-1}(1-t) & \text{ if } t \leq 1 - H(c)\\
\frac{c}{s-(1-H(c))}(s-t) & \text{ if } t > 1 - H(c),
\end{cases}
\]
where $c=1-2^{s-1}$.
Furthermore, these bounds are tight when $X$ is an $s$-codeword.
\end{thm}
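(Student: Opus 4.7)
The proof establishes an upper bound $d(X, Z) \leq \Worst(s, t)$ for arbitrary $X \in \A_s$, together with tightness when $X$ is an $s$-codeword. The upper bound construction splits according to the two cases in the definition of $\Worst(s,t)$. When $t \leq 1 - H(c)$, the simple construction is to replace each chunk $X \uhr I_i$ by the nearest element of the covering code $C^i_{r_i}$ with $r_i = \lfloor H^{-1}(1-t)\, i \rfloor$; Lemma~\ref{lem:joe} gives $|C^i_{r_i}| = 2^{(1-H(r_i/i))i + o(i)} = 2^{ti + o(i)}$, so $K(Z \uhr n_j) \leq tn_j + o(n_j)$ and thus $\dim(Z) \leq t$, while the per-chunk Hamming distance is bounded by $H^{-1}(1-t) = \Worst(s, t)$.

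For the harder case $t > 1 - H(c)$, I would use an interpolation between preservation and destruction. Set $Z_0 = X$ and define $Z_1$ by replacing each chunk $X \uhr I_i$ with its nearest element of $C^i_{\lfloor c i \rfloor}$, then let $Z = \Mix(Z_0, Z_1, \gamma)$ with $\gamma = (s-t)/(s-(1-H(c)))$. By the argument of Lemma~\ref{lem:mix-density} applied to the chunk-selection pattern, the fraction of bits of $Z$ coming from $Z_1$-chunks is $\gamma$; since $d(Z_0 \uhr I_i, Z_1 \uhr I_i) \leq c$ for each $i$, the Besicovitch distance satisfies $d(X, Z) \leq \gamma c = \Worst(s, t)$. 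For the dimension bound, when $X$ is an $s$-codeword with $X \uhr I_i \in C^i_{r_i}$ (and $r_i/i \to H^{-1}(1-s)$), each chunk satisfies $K(X \uhr I_i) \leq \log |C^i_{r_i}| = si + o(i)$, so the $Z_0$-chunks contribute $\leq s(1-\gamma)n_j$ and the $Z_1$-chunks contribute $\leq (1-H(c))\gamma n_j$ to $K(Z\uhr n_j)$, summing to exactly $tn_j + o(n_j)$ by the choice of $\gamma$; this gives $\dim(Z) \leq t$. The main obstacle is extending this to arbitrary $X \in \A_s$, where individual chunk complexities need not be bounded by $si$: the joint bound $K(X \uhr n_j) \leq sn_j + o(n_j)$ alone is insufficient, since the preserved chunks $X\uhr P$ may still carry up to $sn_j$ bits of information. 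Resolving this likely requires adapting the chunk pattern to $X$'s per-chunk complexity profile, or combining the Mix with the Bernoulli-type construction of Proposition~\ref{prop:Bernoulli_dec-dim} to handle $X$ that is far from any $s$-codeword in structure.

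For the tightness assertion, let $X$ be an $s$-codeword and $Z$ any sequence with $\dim(Z) = t$, and set $q_i = \Delta(X\uhr I_i, Z \uhr I_i)/i$. The second part of Lemma~\ref{lem:joe} bounds $|B_{q_i i}(Z\uhr I_i) \cap C^i_{r_i}| \leq 5(i+1)\, V(i, q_i i)/V(i, r_i)$, which gives $K(X\uhr I_i \mid Z \uhr I_i) \leq \max(H(q_i) - (1-s), 0)\, i + o(i)$ using $V(i, q_i i) = 2^{H(q_i)i + o(i)}$ and $V(i, r_i) = 2^{(1-s)i + o(i)}$. Summing and combining with $K(X\uhr n_j) \geq (s-\varepsilon)n_j$ (since $\dim(X) = s$ by Proposition~\ref{prop:codeword-s}) and $K(Z\uhr n_j) \leq (t+\varepsilon)n_j$ for infinitely many $j$ (since $\dim(Z) = t$) yields the constraint $\sum_{i<j} \max(H(q_i) - (1-s), 0)\, i \geq (s-t)n_j - o(n_j)$. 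Minimizing the weighted average $\sum_{i<j} q_i i / n_j$ (which bounds $d(X, Z)$) subject to this constraint is a Lagrangian optimization; the first-order condition simplifies via $H(c) - cH'(c) = -\log(1-c) = 1-s$, identifying $c = 1 - 2^{s-1}$ as the optimal nonzero value of $q$. The optimum is achieved by a two-point distribution on $\{0, c\}$ with weight $(s-t)/(H(c)-(1-s))$ on $c$, and its value is precisely $\Worst(s, t)$, giving $d(X, Z) \geq \Worst(s, t)$ as required.
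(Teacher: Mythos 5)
Your tightness argument is essentially the paper's: bound the number of centers of $C^i_{r_i}$ near $Z\restriction I_i$ chunk by chunk via Lemma~\ref{lem:joe}, sum the resulting conditional complexities, and then optimize. The paper packages your Lagrangian step as a concavity (Jensen) argument with the envelope function $f$ of Lemma~\ref{lem:calculus}, which also handles the regime $t\leq 1-H(c)$ cleanly (there the optimum is a single point at $H^{-1}(1-t)$, not your two-point distribution on $\{0,c\}$, whose weight would exceed $1$), but the content is the same. The case $t\leq 1-H(c)$ of the upper bound is likewise fine and is exactly the covering-code argument the paper imports from GMSW.

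The genuine gap is where you flag it: the upper bound in the case $t>1-H(c)$ for an \emph{arbitrary} $X$ of dimension $s$, which is the heart of the theorem. Your $\Mix(X,Z_1,\gamma)$ construction needs the preserved chunks of $X$ to carry only about $s\cdot(\text{their length})$ bits, but $\dim(X)=s$ is only a $\liminf$ statement; $X$ may have packing dimension $1$, so the preserved portion of $X\restriction n_j$ can be nearly incompressible and the complexity budget $t n_j$ is blown. The paper's fix is not to adapt the chunk pattern to a per-chunk complexity profile but to abandon even mixing altogether (Theorem~\ref{thm:exists}): choose a sparse sequence $\ell_1<\ell_2<\cdots$ with $K(X\restriction \ell_j)\leq (s+1/j)\ell_j$ and $K(X\restriction n)\geq (s-1/j)n$ for $n>\ell_j$, and immediately after each such prefix corrupt the next $m_j=\lceil \frac{s-t}{t-(1-H(c))}\ell_j\rceil$ bits at density $c$, replacing them in sub-blocks of length $\ell_{j-1}$ by covering-code centers of complexity $(1-H(c))\ell_{j-1}+o(\ell_{j-1})$ (the sub-blocking evens out the density of changes). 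Since $\ell_{j+1}\gg(\ell_j+m_j)^2$, the changes from earlier stages are negligible, and the density of changes never exceeds $\frac{cm_j}{\ell_j+m_j}+o(1)=\frac{c(s-t)}{s-1+H(c)}+o(1)$, with $\dim$ of the corrupted prefix dropping to $t+o(1)$ by the choice of $m_j$ --- no per-chunk control on $X$ is needed. Separately, the theorem asserts $\dim(Z)=t$ exactly, while your constructions only give $\dim(Z)\leq t$; the paper closes this by setting $Z=\Mix(X,\hat Z,r)$ and invoking continuity of dimension along $\Mix$ (Lemma~\ref{lem:Mix_cts} together with Proposition~\ref{prop:lower_bound}) plus the intermediate value theorem, a step your proposal omits.
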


Before giving the formal proof, we describe the general idea in a non-rigorous way (blithely omitting all lower-order error terms). Recall from \cite[Lemma 3.2]{GMSW:18} that if $\sigma$ is any initial segment of $X$, it is always possible to change $\sigma$ on bits of density at most $H^{-1}(1-t)$ to obtain a string of dimension at most $t$. In particular, this is true for $\sigma$ of dimension 1, and in this case no less than a density of $H^{-1}(1-t)$ changes will do.

The more surprising fact is that there are $\sigma$ of dimension strictly less than 1, indeed there are $\sigma$ of any dimension $s \in (0,1)$, such that reducing the dimension of $\sigma$ any further, to some $t<s$, \emph{still} requires a density of $H^{-1}(1-t)$ changes. The $\sigma$ for which this is true include all the dimension $s$ elements of any $H^{-1}(1-s)n$-covering code as in Lemma~\ref{lem:joe}. For if $\tau$ is any string with $\dim(\tau) = t$, then one can give a code for $\sigma$ by first giving a code for $\tau$ and then a code that picks $\sigma$ out of those elements of the covering code within (normalized) distance $d=d(\sigma,\tau)$ of $\tau$. By Lemma~\ref{lem:joe}, there are at most $2^{(H(d) - 1 +s)n}$ such elements of the covering code. So
\[
sn \leq tn + (H(d) -1 + s)n
\]
and rearranging we find that $d \geq H^{-1}(1-t)$, as claimed.

Thus, for finite strings, to decrease the dimension from $s$ to $t$, a density $H^{-1}(1-t)$ of changes may be required, even if $s$ is very small. The infinite case differs because, if we are given $X$ with $\sigma \prec X$ and $\dim(\sigma)=s$, we can lower the complexity of an initial segment of $X$ not by changing $\sigma$, but by changing the bits that follow it. For simplicity, first imagine that we change the next $\varepsilon n$ bits following $\sigma$ to all 0's. (Here we are imagining $\sigma$ is a difficult string as in the previous paragraph, and its length is $n$.) Then, over a total length of $(1+\varepsilon)n$, we have made at most $\varepsilon n$ changes, or a density $\varepsilon/(1+\varepsilon)$ of changes. And the dimension of the resulting string is $s/(1+\varepsilon)$. In other words, there \emph{is} an initial segment of $X$ whose dimension can be reduced below $s$ with only a small density of changes.

Now, changing the bits directly following $\sigma$ to 0 (which in most cases is a density 1/2 of changes on the post-$\sigma$ part) is not the most efficient way to lower their dimension. We could instead decide in advance that we will change density $c$ of the bits following $\sigma$, and do so in a way to decrease the dimension as much as possible. Even if the following bits have dimension 1, with density $c$ of changes we can get them down to dimension $1-H(c)$. So if our target dimension $t$ satisfies $1-H(c) < t$, then by changing the bits after $\sigma$ for long enough, we can get down to dimension $t$. (If $1-H(c) = t$, then we may have to change bits at this rate forever. This degenerate case corresponds to the original strategy from \cite[Theorem 3.3]{GMSW:18}.)

So suppose we are changing the bits after $\sigma$ at this rate $c$. If we change $cm$ of the next $m$ bits, then the new complexity of the next $m$ bits is at most $(1-H(c))m$. If we had not done anything, the complexity would have been at least $sm$ (since $\dim(X) = s$). Therefore, the benefit we got from our changes was a reduction in complexity of $(s-1+H(c))m$ bits. So if we want to get the greatest possible reduction in complexity at a minimum of changes, we would do well to choose the $c$ which maximizes the benefit/cost ratio of
\[
\frac{s-1+H(c)}{c}.
\]
In this case, the number of bits $m$ that we change should be chosen as the minimum needed to ensure that the resulting string will have dimension $t$. That is, $m$ should be chosen to satisfy
\[
sn+(1-H(c))m = t(n+m).
\]

The proof of Theorem~\ref{thm:dec-dim} uses these same ideas with more technical detail. First, we need a lemma that tells us, among other things, the optimal rate $c$ at which the bits after $\sigma$ should be changed. This $s$-dependent rate gives us the most efficient dimension reduction in the strategy described above, no matter which dimension $t$ we are aiming for. We omit the proof of the lemma which is straightforward, if tedious, calculus. See Figure~\ref{fig:understading-f} for an explanation of the geometric importance of $f(d)$.

\begin{figure}[hbpt]
\begin{tikzpicture}
\begin{axis}[
	xlabel = {$d$},
	xlabel shift = -6pt,
	ylabel = {$f(d)$},
	ylabel shift = -10pt,
	ymin = 0, ymax = 0.635777, ytick = {0,.5},
	xmin = 0, xmax = 0.5, xtick = {0,.5}, 
	label style={font=\footnotesize},
	ticklabel style = {font=\footnotesize},
	x = 4.5in,
	y = 4in,
	axis lines* = left,
	clip = false
	]

	\addplot[solid, thick] table[header = false] {Mathematica/f.txt};

	\addplot[dashed, thick] table[header = false] {Mathematica/max.txt} node[pos=0.48, pin={[text width=87pt, align=center, pin edge={solid, black}, pin distance=9pt] 0:{\footnotesize $\max\{0,s-1+H(d)\}$}}] {};

	\addplot[densely dotted, thick] table[header = false] {Mathematica/line.txt} node[pos=0.65, pin={[text width=59pt, align=center, pin edge={solid, black}, pin distance=9pt] 180:{\footnotesize $\displaystyle\frac{s-1+H(c)}{c}d$}}] {};

	\addplot[only marks, mark size = 1.25] coordinates {(0.292893,0.372429)} node[pin={[text width=60pt, align=center, pin edge=black, pin distance=3pt] -45:{\footnotesize case transition\\when $d=c$\\}}]{};

\end{axis}
\end{tikzpicture}
\caption{Illustrated for $s=1/2$, the function $f(d)$ is the smallest possible increasing, concave down function on $[0,1/2]$ that bounds $\max\{0,s-1+H(d)\}$.}
\label{fig:understading-f}
\end{figure}
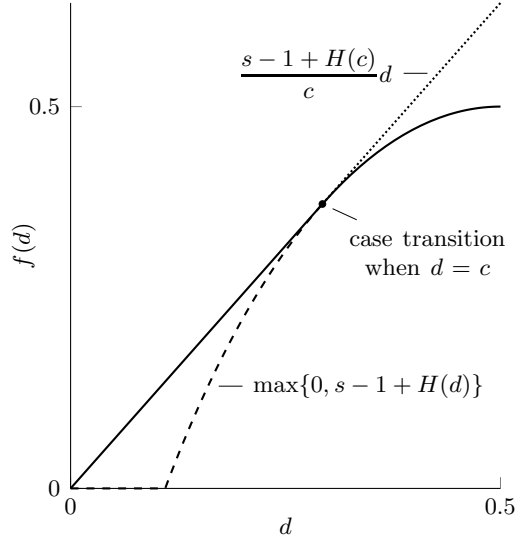

\begin{lem}\label{lem:calculus}
The function $\frac{s-1+H(d)}{d}$, 
considered as a function of $d$ on the interval $(0, 1/2]$,
attains its maximum at $c=1-2^{s-1}$. Furthermore, the function 
$f\colon [0,1/2]\rightarrow [0,s]$ defined by 
$$f(d) = \begin{cases} s-1+H(d) & \text{ if } d \geq c\\ \frac{s-1+H(c)}{c}d & \text{ if } d< c\end{cases}$$
is increasing, concave down, and satisfies 
$s-1+H(d) \leq f(d) \leq \frac{s-1+H(c)}{c}d$
on $[0,1/2]$.
\end{lem}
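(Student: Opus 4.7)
The plan is to analyze the function $g(d) = (s-1+H(d))/d$ on $(0,1/2]$ by direct calculus, and then use the location of its maximizer to verify the three claimed properties of $f$ at once. The first step is to differentiate: using $H'(d) = \log\tfrac{1-d}{d}$, the equation $g'(d)=0$ simplifies after multiplying by $d^2$ to the condition $d\,H'(d) = s-1+H(d)$, which after combining the $\log d$ terms reduces to $\log(1-d) = s-1$. This has the unique solution $d = c = 1-2^{s-1}$, which lies in $(0,1/2]$ when $s\in(0,1]$. (The degenerate cases $s=0$ and $s=1$ can be read off directly from the piecewise definition.) To confirm $c$ is the global maximizer on $(0,1/2]$, I would note that $g(d)\to -\infty$ as $d\to 0^+$ (when $s<1$), while at the right endpoint $g'(1/2) = -4s < 0$ because $H'(1/2)=0$; since $g$ has only the one critical point, the global max on the interval occurs at $c$.

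The key geometric fact coming out of this calculation is that $g'(c)=0$ is equivalent to $H'(c) = g(c) = (s-1+H(c))/c$; that is, the tangent to the curve $y = s-1+H(d)$ at $d=c$ passes through the origin. This single identity is what will make the piecewise definition of $f$ glue together nicely. With this in hand, I would verify the three properties in order. For monotonicity: the slope of the linear piece is $g(c)$, which is positive because $g(c)\geq g(1/2) = 2s>0$; on $[c,1/2]$, $f$ agrees with the strictly increasing function $s-1+H(d)$; and continuity at $c$ is built into the definition.

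For concavity, each piece is individually concave (one linear, one a shift of $H$), so it suffices to check that $f$ is $C^1$ at $c$; but this is exactly the tangent identity above. Hence $f$ is concave down on all of $[0,1/2]$. For the two inequalities $s-1+H(d)\leq f(d)\leq \tfrac{s-1+H(c)}{c}\,d$, on the intervals where one side equals $f(d)$ by definition there is nothing to prove, so it reduces in each case to showing $g(d)\leq g(c)$ on the opposite interval, which is immediate from maximality; one just notes separately that when $s-1+H(d)<0$ (possible for small $d$), the inequality with the nonnegative linear function is trivial, so dividing by $d$ is safe everywhere the comparison with $g(c)$ is actually needed.

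The only mildly delicate point, which I would flag as the main ``obstacle,'' is recognizing that the first-order optimality condition for $g$ is simultaneously the statement that the tangent line to $s-1+H(d)$ at $c$ has slope $g(c)$ and passes through the origin. Once that equivalence is visible (and it is also the content of Figure~\ref{fig:understading-f}), both the $C^1$-gluing needed for concavity and the sandwich inequality follow at the same stroke; the remainder of the proof is bookkeeping.
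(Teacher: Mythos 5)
Your proof is correct, and it is precisely the ``straightforward, if tedious, calculus'' that the paper omits: the reduction of $g'(d)=0$ to $\log(1-d)=s-1$, the observation that this condition says exactly that the tangent to $s-1+H(d)$ at $d=c$ has slope $g(c)=\frac{s-1+H(c)}{c}$ and passes through the origin (which gives the $C^1$ gluing of the two concave pieces, hence concavity, and the sandwich inequalities via maximality of $g(c)$), and the endpoint/limit argument pinning the maximum at the unique critical point all check out. One trivial slip: $c=1-2^{s-1}$ lies in $(0,1/2]$ for $s\in[0,1)$, not $s\in(0,1]$ (at $s=1$ one has $c=0$), but since you set $s=1$ aside as degenerate---and the paper only needs the lemma when $s<1$---this does not affect the argument.
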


\begin{thm}\label{thm:exists}
For any $X$ of dimension $s$ and any $t \in (1-H(c), s)$, where $c=1-2^{s-1}$, there is a sequence $Z$ with $\dim(Z) \leq t$ and $$d(X,Z) \leq \frac{c}{s-1+H(c)}(s-t).$$
\end{thm}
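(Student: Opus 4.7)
The plan is to realize the heuristic laid out before the theorem. Since $\dim(X)=s$, along a sufficiently sparse subsequence $n_1 < n_2 < \cdots$ we have $K(X\restriction n_k) \leq (s+\varepsilon_k)n_k$ with $\varepsilon_k \to 0$. Setting $\rho = (s-t)/(t-1+H(c))$ (positive and finite by the hypothesis $1-H(c) < t < s$), I would let $m_k = \lceil \rho n_k \rceil$ and $N_k = n_k + m_k$. Using Lemma~\ref{lem:joe}, pick a codeword $\sigma_k \in C^{m_k}_{\lceil c m_k \rceil}$ with $\Delta(\sigma_k, X\restriction [n_k,N_k)) \leq cm_k$. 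Define $Z$ by $Z\restriction[n_k,N_k) = \sigma_k$ and $Z\restriction[N_{k-1}, n_k) = X\restriction[N_{k-1},n_k)$, with $N_0 = 0$. I would insist that the $n_k$ grow so quickly that $N_{k-1} = o(n_k)$ and $k \log N_k = o(n_k)$; standard tower-of-exponentials growth suffices.

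Next I would verify $\dim(Z) \leq t$ by bounding $K(Z\restriction N_k)$. The key observation is that $Z\restriction N_k$ can be reconstructed from just $X\restriction n_k$, the indices $n_1, N_1, \ldots, n_k, N_k$, and the single codeword $\sigma_k$, because the earlier $\sigma_i$ (for $i < k$) are computable from the substrings $X\restriction[n_i,N_i)$ of $X\restriction n_k$ and the fixed computable sequence of covering codes. By Lemma~\ref{lem:joe}, $K(\sigma_k) \leq (1-H(c))m_k + O(\log m_k)$, so
\[
K(Z\restriction N_k) \leq (s+\varepsilon_k)n_k + (1-H(c))m_k + o(N_k).
\]
Dividing by $N_k = (1+\rho)n_k + O(1)$ yields a bound approaching $(s + (1-H(c))\rho)/(1+\rho)$, and a direct algebraic check (using the definition of $\rho$) shows this equals exactly $t$.

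For the distance bound, the total number of changes in $[0, N_k)$ is at most $c\sum_{i\leq k} m_i$. The fast growth condition forces $\sum_{i<k} m_i = o(n_k)$, so the density of changes at position $N_k$ approaches $cm_k/N_k \to c\rho/(1+\rho)$, which simplifies to $c(s-t)/(s-1+H(c))$. Between $N_k$ and $n_{k+1}$ no new changes occur while the denominator grows, so the density only decreases on that interval; hence $d(X,Z) \leq c(s-t)/(s-1+H(c))$. The main obstacle is really just the bookkeeping around the growth rate of $n_k$: it must be fast enough that indices, accumulated earlier modifications, and $O(\log n)$ overheads are all $o(N_k)$, while preserving the correct asymptotic ratio $m_k/n_k \to \rho$ and ensuring limsup control of density at all lengths (not just at the $N_k$).
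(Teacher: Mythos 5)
Your dimension argument is essentially sound: since $\dim(Z)$ is a liminf, it suffices to verify $K(Z\restriction N_k)/N_k \to t$ along the block ends, and your reconstruction of $Z\restriction N_k$ from $X\restriction n_k$, the indices, and $\sigma_k$ works once you fix a canonical (say least) choice of covering codeword in each block; the algebra giving $t$ is the same as the paper's. The genuine gap is in the distance bound, and it is not the growth-rate bookkeeping you describe. Replacing the whole block $X\restriction[n_k,N_k)$ by a \emph{single} element $\sigma_k$ of $C^{m_k}_{\lceil cm_k\rceil}$ only bounds the total number of disagreements in the block by roughly $cm_k$; it says nothing about how those disagreements are distributed inside the block. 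But $d(X,Z)$ is a limsup over \emph{all} prefix lengths, including lengths interior to a block, and you only check $n=N_k$ and $n\in[N_k,n_{k+1})$. Nothing in Lemma~\ref{lem:joe} prevents the nearest codeword from differing from $X\restriction[n_k,N_k)$ on, say, its first $\lceil cm_k\rceil$ positions; in that case, at $n=n_k+\lceil cm_k\rceil$ the density of disagreement is about $\frac{c\rho}{1+c\rho}$ (where $\rho=\frac{s-t}{t-(1-H(c))}$), which is strictly larger than the claimed bound $\frac{c\rho}{1+\rho}=\frac{c(s-t)}{s-1+H(c)}$ whenever $c<1$ and $\rho>0$, i.e., throughout the range of the theorem. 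No condition on how fast $n_k$ grows can repair this, because the problem is internal to a single block.

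This is exactly the issue the paper flags ("if the changes are not distributed evenly\dots we could end up with $d(X,Z)$ being too large") and fixes by subdividing each modified block into chunks of length $\ell_{j-1}$ and applying Lemma~\ref{lem:joe} to each chunk with radius $\lceil c\ell_{j-1}\rceil$. Then every chunk carries at most $c\ell_{j-1}+1$ changes, so the changes are spread across the block at rate essentially $c$; at any intermediate prefix $\ell_j+k\ell_{j-1}$ the number of changes is at most $ck\ell_{j-1}$ plus lower-order terms, and since $x\mapsto x/(\ell_j+x)$ is increasing, the worst density is $\frac{cm_j}{\ell_j+m_j}+o(1)$, which is the target value. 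Your proposal becomes correct if you make the same chunkwise modification; note that the complexity estimate is unaffected, since concatenating the chunkwise codewords costs $(1-H(c))m_k+o(m_k)$ bits, just as your single codeword does.
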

\begin{proof}
Let $\ell_1, \ell_2,\dots$ be an increasing sequence of indices
such that for all $j$, $K(X\uhr \ell_j) \leq (s + 1/j)\ell_j$ and 
for all $n>\ell_j$, $K(X\uhr n) \geq (s-1/j)n$.
We require that these $\ell_j$ be rather spaced apart; in particular, we require that 
\[
\ell_{j+1} > \left\lceil\ell_j\left(1+\frac{s-t}{t-(1-H(c))}\right)\right\rceil^2.
\]
Clearly, there is a sequence $\langle \ell_j \rangle_{j \geq 1}$ satisfying these conditions.

Now, for each $j$, we define $Z\uhr[\ell_j,\ell_{j+1})$ as follows. Let
$$m_j = \left\lceil \frac{s-t}{t-(1-H(c))}\ell_j \right\rceil.$$
By the spacing condition on the 
sequence $\langle \ell_j\rangle$, we have $\ell_j + m_j < \ell_{j+1}$. (Indeed, $\ell_j + m_j < \sqrt{\ell_{j+1}}$, which will be used later.) The most naive idea would be to define $Z\uhr[\ell_j,\ell_{j+1})$ by 
changing $cm_j$ of the first $m_j$ bits of $X\uhr[\ell_j,\ell_{j+1})$ 
in some arbitrary way
in order to lower the dimension of that first part of the string 
to $1-H(c)$. There is a problem with this. If the 
$cm_j$-many changes are not distributed evenly, for example 
if they concentrated at the beginning, then 
we could end up with $d(X,Z)$ being too large. If $t$ is 
very close to $1-H(c)$, then $m_j$ is extremely large 
and density fluctuations within it would matter quite a bit. So 
we must break the interval $[\ell_j,\ell_j+m_j)$ into small chunks 
and limit the density of changes in each chunk.

We will even out the density of changes by making the changes in small chunks of 
length $\ell_{j-1}$. (Unless $j=1$, in which case we simply make the change 
in a single chunk; as all of the properties we are concerned with are tail properties, it is always safe to ignore what happens for finitely many $j$.) By Lemma~\ref{lem:joe} (with $r = \lceil c\ell_{j-1} \rceil$ and $n = \ell_{j-1}$), for each string $\sigma_k = X\uhr[\ell_j+k\ell_{j-1}, \ell_j+(k+1)\ell_{j-1})$ there is a string $\tau_k$ of complexity at
most $(1-H(c))\ell_{j-1} + o(\ell_{j-1})$
within distance $\lceil c\ell_{j-1} \rceil$ of $\sigma$. Equivalently,
$\tau_k$ has complexity at most $(1-H(c) + o(1))\ell_{j-1}$.\footnote{Whenever we use $O$- or $o$-notation in this proof, the parameters $s$ and $t$ are 
considered as constants, and the asymptotic notation bounds the 
dependence on $j$ and $\ell_j$. Thus an $o(1)$ term is a term whose limit 
is 0 as $j$ approaches infinity, while an $O(1)$ term is bounded above by a 
constant which may involve $s$ and $t$.} Let $v_j=\lfloor m_j/\ell_{j-1}\rfloor$.
Letting $\xi_j = \tau_0\tau_1\dots\tau_{v_j-1}$, we define
$$Z\uhr[\ell_j, \ell_{j+1}) = \xi_j \concat X\uhr[\ell_j+v_j\ell_{j-1}, \ell_{j+1}).$$
 
We may estimate $\dim((X\uhr \ell_j)\concat \xi_j)$
as follows:
\begin{align*}
K((X\uhr \ell_j) \concat \xi_j) &\leq s\ell_j + o(\ell_j) + \sum_{i=0}^{v_j-1} [(1-H(c))\ell_{j-1} + o(\ell_{j-1})]\\
& = s\ell_j + o(\ell_j) + (1-H(c)+o(1))m_j + o(m_j).
\end{align*}
By the definition of $m_j$, we have
$$(s-t)\ell_j = (t-(1-H(c)))m_j + O(1).$$
Therefore, since the classes $o(\ell_j)$ and $o(m_j)$ coincide, we have
\begin{alignat*}{2}
K((X\uhr \ell_j) \concat \xi_j) &\leq s\ell_j &&- (s-t)\ell_j \\
& &&+ (t-(1-H(c)))m_j + (1-H(c))m_j + o(\ell_j)\\
& = t(\ell_j+m_j) + o(\ell_j),\hidewidth
\end{alignat*}
so $\dim((X\uhr \ell_j) \concat \xi_j) \leq t +o(1)$.

We now must show that $Z$ is as required. For any $j$, 
the places where $X\uhr \ell_j$ and $Z\uhr \ell_j$ differ are confined to the interval
$[0,\ell_{j-1}+m_{j-1})$. As noted above,
$\ell_{j-1}+m_{j-1} < \sqrt{\ell_j}$. Therefore, 
$$\dim(Z\uhr \ell_j+v_j\ell_{j-1}) \leq \dim((X\uhr \ell_j)\concat \xi_j) + O(1/\sqrt{\ell_j}) \leq t+o(1).$$
This shows that $\dim Z \leq t$.

Finally, to compute the distance between $X$ and $Z$, it suffices to check 
values of $d(X\uhr n, Z\uhr n)$ where $n$ is of the form $\ell_j + k\ell_{j-1}$ for 
$k \leq (\ell_{j+1}-\ell_j)/\ell_{j-1}$. At all other $n$, the values differ by at most 
$\ell_{j-1}/\ell_j$, which is $o(\ell_j)$.
Among these points we check, it is clear the largest densities occur 
along the sequence $n=\ell_j+k \ell_{j-1}$, where $k \leq v_j$. 
In this case the number 
of bits on which $X$ and $Z$ differ is bounded by 
$\ell_{j-1}+m_{j-1}+k\lceil c\ell_{j-1}\rceil \leq ck\ell_{j-1} + o(m_j)$.
Therefore, $$d(X\uhr (\ell_j+k\ell_{j-1}), Z\uhr (\ell_j+k\ell_{j-1})) \leq \frac{ck\ell_{j-1}}{\ell_j+k\ell_{j-1}} + o(1).$$
As $x/(\ell_j+x)$ is an increasing function in $x$, and $k\ell_{j-1}\leq m_j$,
this quantity is bounded by 
$cm_j/(\ell_j+ m_j) +o(1)$. We have then, by the definition of $m_j$,
\begin{align*}
\frac{cm_j}{\ell_j+m_j} &= \frac{c \frac{s-t}{t-(1-H(c))}\ell_j}{\ell_j + \frac{s-t}{t-(1-H(c))}\ell_j} + o(1)\\
&= \frac{c(s-t)}{s-1+H(c)} + o(1),
\end{align*}
and thus $d(X,Z)$ is bounded by this quantity, as required.
\end{proof}

\begin{thm}\label{thm:forall}
Let $t<s<1$, and let $X$ be an $s$-codeword. Let $Z$ be any other sequence with $\dim Z = t$. Let $d = d(X,Z)$.
Then 
$$s - t \leq \frac{s-1+H(c)}{c} d,$$
where $c = 1-2^{s-1}$. Furthermore, if $d \geq c$, then
$$s-t \leq s-1+H(d).$$
\end{thm}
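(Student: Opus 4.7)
The plan is to exploit the redundancy of the $s$-codeword $X$ by efficiently describing $X \uhr n_j$ from $Z \uhr n_j$ together with the per-chunk Hamming distances, and then comparing with the dimensional bounds on $K(X \uhr n_j)$ and $K(Z \uhr n_j)$.

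First I would fix a sequence $\langle r_i \rangle$ witnessing that $X$ is an $s$-codeword, and for each chunk set $q_i = \Delta(X \uhr I_i, Z \uhr I_i)$, so that $X \uhr I_i \in C^i_{r_i} \cap B_{q_i}(Z \uhr I_i)$. Using Lemma~\ref{lem:joe} (applied with radius $\max\{q_i, r_i\}$), the bound~\eqref{eqn:V(n,r)_basic_bound}, and the relation $r_i = H^{-1}(1-s)\,i \pm o(i)$, I expect the uniform per-chunk bound
$$\log \lvert C^i_{r_i} \cap B_{q_i}(Z \uhr I_i) \rvert \leq \tilde f(q_i/i)\,i + O(\log i),$$
where $\tilde f \colon [0,1] \to [0,s]$ equals $f$ (from Lemma~\ref{lem:calculus}) on $[0,1/2]$ and is constant $s$ on $[1/2,1]$. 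When $q_i < r_i$ the intersection has at most $5(i+1)$ elements; when $q_i \geq r_i$, Lemma~\ref{lem:joe} directly yields $(H(q_i/i) - (1-s))\,i + O(\log i)$; both are majorized by $\tilde f(q_i/i)\,i$, using $s - 1 + H(d) \leq f(d)$ on $[0,1/2]$ (by the tangency at $d=c$ implicit in Lemma~\ref{lem:calculus}). Encoding $r_i$, $q_i$, and the chosen element of the intersection for each $i < j$, and absorbing $O(j \log j)$ overhead into $o(n_j)$ via $n_j = \Theta(j^2)$, should then give
$$K(X \uhr n_j) \leq K(Z \uhr n_j) + \sum_{i<j} \tilde f(q_i/i)\,i + o(n_j).$$

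Next I would verify by a short case analysis that $\tilde f$ is concave and non-decreasing on $[0,1]$ (pasting the concave $f$ on $[0,1/2]$ to the constant $s$ on $[1/2,1]$ preserves concavity because $f$ attains its maximum $s$ at $1/2$). Jensen's inequality for the weighted mean of $q_i/i$ with weights $i/n_j$ then gives
$$\sum_{i<j} \tilde f(q_i/i)\,i \leq n_j \cdot \tilde f\!\left(\frac{\Delta(X \uhr n_j,\, Z \uhr n_j)}{n_j}\right).$$
Proposition~\ref{prop:codeword-s} gives $K(X \uhr n_j) \geq (s - o(1))\,n_j$, and $\dim Z = t$ supplies an infinite $J' \subseteq \omega$ along which $K(Z \uhr n_j) \leq (t + o(1))\,n_j$. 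Combining these inequalities and passing along $J'$ to a sub-subsequence on which $\Delta(X \uhr n_j, Z \uhr n_j)/n_j$ converges --- necessarily to some $d^\ast \leq d = d(X,Z)$ by the $\limsup$ definition --- continuity and monotonicity of $\tilde f$ yield $s - t \leq \tilde f(d)$.

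Finally, for $d \leq 1/2$ this reads $s - t \leq f(d)$, and Lemma~\ref{lem:calculus} supplies both claimed inequalities: $f(d) \leq \frac{s - 1 + H(c)}{c}\,d$ always, and $f(d) = s - 1 + H(d)$ when $d \geq c$. For $d > 1/2$, $\tilde f(d) = s$, so the first inequality reduces to $s - t \leq s \leq \frac{s - 1 + H(c)}{c}\,d$, where the last step uses $\frac{s - 1 + H(c)}{c} = f(c)/c \geq f(1/2)/(1/2) = 2s$ by concavity of $f$. The hard part will be the coding bookkeeping: cleanly verifying the uniform per-chunk bound across both the $q_i < r_i$ and $q_i \geq r_i$ regimes (including $q_i/i > 1/2$), and confirming that the $O(\log i)$ overhead for encoding the varying parameters $r_i$ and $q_i$ genuinely disappears in the limit. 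Once those are in hand, Jensen's inequality together with the subsequence argument completes the proof routinely.
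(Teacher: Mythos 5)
Your proposal is correct and follows essentially the same route as the paper's proof: describe $X\restriction n_j$ from $Z\restriction n_j$ chunk-by-chunk by selecting $X\restriction I_i$ among the elements of $C^i_{r_i}$ within the chunk distance (Lemma~\ref{lem:joe}), majorize each chunk's contribution by a concave, increasing function extending $\max\{0,s-1+H(\cdot)\}$, apply Jensen, and compare against $\dim_P(X)=s$ and $\dim(Z)=t$ along a subsequence before invoking Lemma~\ref{lem:calculus}; your explicit extension $\tilde f$ is in fact a small improvement, since it cleanly covers chunks with $q_i/i>1/2$, which the paper's per-chunk estimate and its use of $f$ (defined only on $[0,1/2]$) pass over in silence. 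The one caveat is that your argument, exactly like the paper's, yields the second inequality only when $c\leq d\leq 1/2$ --- but that is the only regime in which it is later used (for $d>1/2$ the needed consequence $d\geq H^{-1}(1-t)$ is trivial), and indeed the second inequality as literally stated can fail when $d>1/2$ (take $Z$ of dimension $t$ close to $\overline{X}$ via Theorem~\ref{thm:dec-dim}, so that $d(X,Z)\geq 1-\Worst(s,t)$ while $H(d)<1-t$), so no proof could cover that case and nothing provable is lost.
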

\begin{proof}
For each $i$, let $d_i = d(X\uhr I_i, Z\uhr I_i)$, so that $d_i$ 
represents the density of changes made on the $i$th chunk of $X$ when moving 
to $Z$. Let $\langle r_i\rangle_{i \in \omega}$ be a sequence witnessing that 
$X$ is an $s$-codeword. Then by Lemma~\ref{lem:joe}, we have for each $i$,
\begin{enumerate}
\item If $id_i \leq r_i$, then there are at most $O(i)$ elements of $C^i_{r_i}$ within 
distance $d_i$ from $Z \uhr I_i$, and
\item If $id_i \geq r_i$, then there are at most $2^{(H(d_i) - (1-s))i + o(i)}$ elements 
of $C^i_{r_i}$ within that distance.
\end{enumerate}
(The latter estimate also relies on the standard approximation 
$V(n,r) = 2^{H(r/n)n + o(n)}$, as well as $H(r_i/i) = H (H^{-1}(1-s) + o(1)) = 1-s + o(1),$
because $s<1$, $H$ is differentiable at $H^{-1}(1-s)$ and so the error on the 
input is within a constant factor of the error on the output.)

For each $j$, let $\bar t_j = \dim(Z\uhr[0,n_j))$, let $\bar s_j = \dim(X\uhr[0,n_j))$, and 
let $\bar d_j = d(X\uhr[0,n_j)),Z\uhr[0,n_j))$. Observe that 
$\bar d_j = \sum_{i<j} \frac{i}{n_j}d_i$. 

Now, we can always give a code for $X\uhr[0,n_j)$ 
by the following method. First give a code for $Z\uhr[0,n_j)$, 
and then for each $i<j$, give a code for $id_i$, a code for $r_i$,
and a code picking $X\uhr I_i$ out of those elements
of $C^i_{r_i}$ within distance $id_i$ of $Z\uhr I_i$.

The optimal code for $Z\uhr[0,n_j)$ has length $n_j\bar t_j$, the code for each $id_i$ and $r_i$
have length $O(\log i)$, and the length of each $C^i_{r_i}$ part has length either
$O(\log i)$ (if $id_i \leq r_i$) or $(s-1+H(d_i))i + o(i)$ (if $id_i > r_i$).
By Lemma~\ref{lem:calculus}, each 
$C^i_{r_i}$ part has length bounded by $f(d_i)i + o(i)$,
where $f$ is the function defined in that lemma.
Therefore, the code we have just described for $X\uhr[0,n_j)$ implies the following 
inequality:
$$n_j\bar s_j \leq n_j \bar t_j + \sum_{i<j} o(i) + \sum_{i<j} f(d_i)i.$$
Since $\sum_{i<j} o(i)$ is $o(j^2)$, dividing through by $n_j$ and rearranging yields
$$\bar s_j - \bar t_j \leq \sum_{i<j}\frac{i}{n_j} f(d_i) + o(1).$$
By Lemma~\ref{lem:calculus}, $f$ is concave down, so 
$$\bar s_j - \bar t_j \leq f(\bar d_j) + o(1).$$
Now we take the limsup on both sides, obtaining
$$ s - \liminf_{j\to\infty} \bar t_j \leq f\Bigr(\limsup_{j\to\infty} d_j\Bigl).$$
By Proposition~\ref{prop:codeword-s}, $\lim_{j\rightarrow \infty} \bar s_j=s$
(i.e., the packing dimension of $X$ is also $s$), which justifies the left-hand side. The right-hand
side is justified by the fact that $f$ is increasing and thus commutes with the limsup.
But this equation now says simply that 
$$s-t \leq f(d).$$
The first part of the theorem now follows because 
$f(d) \leq \frac{s-1+H(c)}{c}d$ for all $d$, and the second part follows by the 
definition of $f$.
\end{proof}

\begin{proof}[Proof of Theorem~\ref{thm:dec-dim}]
First we show that there is some $\hat Z$ with $\dim(\hat Z) \leq t$ such 
that $d(\hat X,Z)$ is as required. 
Theorem 3.3 from \cite{GMSW:18} 
states that for any $X$, we may find 
$\hat Z$ with $\dim(\hat Z) \leq t$ and $d(X,\hat Z) \leq H^{-1}(1-t)$.
By Theorem~\ref{thm:exists}, if $t > 1-H(c)$,
we can even find such 
$\hat Z$ within the smaller distance $\frac{c}{s-1+H(c)}(s-t)$, where 
$c = 1-2^{s-1}$. It is easy to see that this distance 
is in fact smaller, because when $t=1-H(c)$ both functions take the
same value $c$ and the same slope 
$-\frac{c}{s-1+H(c)}$, but $H^{-1}(1-t)$ is concave up while 
the other function is linear.

It remains to find $Z$ with $\dim(Z)= t$ and $d(X,Z)$ as required.
We shall define $Z = \Mix(X,\hat Z, r)$ (Definition~\ref{defn:Mix}) for some $r \in [0,1]$.
By the definition of $\Mix$, we have $d(X,Z) \leq d(X,\hat Z)$
regardless of what $r$ is chosen. Also, $r=0$ gives $Z=X$, which 
has dimension $s$, and 
$r=1$ gives $Z = \hat Z$, which has dimension $\leq t$. By Lemma~\ref{lem:Mix_cts}, the dimension of $Z$ varies continuously
with $r$ (this also uses the fact that $W\mapsto \dim W$ 
is continuous with respect to the Besicovitch pseudometric by
Proposition~\ref{prop:lower_bound}). 
So by the intermediate value theorem, there is some 
choice of $r$ which gives $\dim(Z) = t$, as required.

On the other hand, Theorem~\ref{thm:forall} establishes that 
when $X$ is an $s$-codeword,
the distance to any $Z$ of dimension $t$ must be at least 
$\frac{c}{s-1+H(c)}(s-t)$. This bound is obtained by a simple rearrangement
of the first bound in Theorem~\ref{thm:forall}.
If additionally $t \leq 1- H(c)$, this first
bound implies that the distance $d = d(X,Z)$ is at least $c$.
Therefore, the second bound in Theorem~\ref{thm:forall} applies,
and a simple rearrangement yields $d\geq H^{-1}(1-t)$. 
Therefore any $s$-codeword
witnesses the tightness of these bounds.
\end{proof}

\begin{question}
Say a sequence $X$ is \emph{optimally dimension $s$} if $X$ has dimension 
exactly $s$ and $d(X,1) = H^{-1}(1-s)$. Does every 
optimally dimension $s$ sequence exhibit the worst case behavior above? 
If so, does this worst case behavior characterize the optimally dimension 
$s$ sequences?
\end{question}

\section{All intermediate cases occur}
\label{sec:intermediate}

For sequences $X\in 2^\omega$ of dimension $s$, we have seen by Theorem~\ref{thm:main_thm} that
\begin{align*}
d(X,t) &\in [H^{-1}(t-s),H^{-1}(t)-H^{-1}(s)] & \text{when }t \geq s \\
\text{and}\quad d(X,t) &\in [H^{-1}(s-t),\Worst(s,t)] & \text{ when }t\leq s, 
\end{align*}
and furthermore, that one endpoint of each interval is realized when $X$ is a Bernoulli $H^{-1}(s)$-random and the other when $X$ is an $s$-codeword. We shall show that if we mix sequences of these two types, then we obtain sequences of dimension exactly $s$ that realize every intermediate value. Recall that a sequence $Y$ has dimension \emph{exactly} $s$ if $\dim(Y)=\dim_P(Y)=s$.

\begin{lem}\label{lem:mix-dimension}
Let $s \in [0,1]$, let $X$ be an $s$-codeword, and let $Z$ be 
Bernoulli $H^{-1}(s)$-random relative to $X$. Then for any $r\in [0,1]$,
the dimension of $\Mix(X,Z,r)$ is exactly $s$.
\end{lem}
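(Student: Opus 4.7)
Write $Y = \Mix(X,Z,r)$ and partition $[0,n_j) = A_{n_j}\sqcup B_{n_j}$ according to $b(r)\restriction j$, so that $Y$ agrees with $X$ on $A_{n_j}$ and with $Z$ on $B_{n_j}$. Applying Lemma~\ref{lem:mix-density} to $\Mix(0^\omega,1^\omega,r)$ gives $|B_{n_j}|/n_j \to r$. I will show $\dim_P(Y)\le s$ and $\dim(Y)\ge s$.

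The upper bound is routine: each chunk $Y\restriction I_i$ is either an element of $C^i_{r_i}$ (complexity $\le si+o(i)$) or a chunk of $Z$ whose density tends to $H^{-1}(s)$ by Proposition~\ref{prop:bernoulli-1} (again complexity $\le si+o(i)$). Concatenating prefix-free chunk descriptions yields $K(Y\restriction n_j)\le sn_j+o(n_j)$, hence $\dim_P(Y)\le s$.

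The lower bound is the main content. The strategy is to recover an $X$-piece and a $Z$-piece from $Y$ and bound their joint complexity via the chain rule. Given $Y\restriction n_j$ together with the $j+O(\log j)$ bits encoding $b(r)\restriction j$, one can split off $X\restriction A_{n_j}$ and $Z\restriction B_{n_j}$, so
\[ K(X\restriction A_{n_j},\ Z\restriction B_{n_j}) \le K(Y\restriction n_j) + j + O(\log j). \]
Each of the two pieces is bounded below by subtracting a chunk-wise upper bound from a global lower bound. For $X$: Proposition~\ref{prop:codeword-s} gives $K(X\restriction n_j)\ge sn_j-o(n_j)$, while the codeword chunk bound plus the $j$-bit partition gives $K(X\restriction B_{n_j})\le srn_j+o(n_j)$, so $K(X\restriction A_{n_j})\ge s(1-r)n_j-o(n_j)$. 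For $Z$: Proposition~\ref{prop:bernoulli-2} relativized to $X$ gives $K^X(Z\restriction n_j)\ge sn_j-o(n_j)$, and chunk-wise density bounds give $K^X(Z\restriction A_{n_j})\le s(1-r)n_j+o(n_j)$, yielding $K^X(Z\restriction B_{n_j})\ge srn_j-o(n_j)$. Since enlarging the oracle from $X\restriction A_{n_j}$ to $X$ can only decrease complexity, $K(Z\restriction B_{n_j}\mid X\restriction A_{n_j})\ge K^X(Z\restriction B_{n_j})-O(1)$, so the chain rule combines these into $K(X\restriction A_{n_j},\ Z\restriction B_{n_j})\ge sn_j-o(n_j)$. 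Therefore $K(Y\restriction n_j)\ge sn_j-o(n_j)$, and the $O(\sqrt{n})$ gap between $n_j$ and an arbitrary $n$ is absorbed into the error.

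The main subtlety is that $r\in[0,1]$ is arbitrary and may be pathologically related to $X$ or $Z$; one might naively want $Z$ to be Bernoulli-random relative to $X\oplus r$, which we do not assume. The crucial observation is that $b(r)\restriction j$ costs only $j=O(\sqrt{n_j})$ bits, so it can be appended explicitly to any description throughout the argument, absorbing the cost into the $o(n_j)$ error and allowing every Kolmogorov complexity used in the lower bound to be taken relative to $X$ alone, not $X\oplus r$.
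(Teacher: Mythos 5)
Your proof is correct, and it gets to the paper's conclusion by a somewhat different arrangement of the same ingredients. The paper forms the joint complexity $K(b(r)\restriction j, X\restriction n_j, Z\restriction n_j) = 2sn_j + o(n_j)$ and decomposes it by the chain rule into five conditional terms covering both the retained and the discarded chunks of $X$ and $Z$; since the four chunk-wise upper bounds sum to exactly $2sn_j$, each must be tight, and adding the two relevant terms yields $K(\Mix(X,Z,r)\restriction n_j) = sn_j + o(n_j)$. You never form the joint or mention the discarded halves: you lower-bound the two pieces actually present in $\Mix(X,Z,r)$ by subtracting chunk-wise upper bounds on the complementary pieces from the global bounds $K(X\restriction n_j) \geq sn_j - o(n_j)$ (Proposition~\ref{prop:codeword-s}) and $K^X(Z\restriction n_j) \geq sn_j - o(n_j)$ (relativized Proposition~\ref{prop:bernoulli-2}), then glue with the nontrivial direction of symmetry of information, whose logarithmic error is harmless. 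This is more modular; the paper's version instead pins down all four conditional complexities exactly, which is not needed for the lemma. One imprecision: $K(Z\restriction B_{n_j}\mid X\restriction A_{n_j}) \geq K^X(Z\restriction B_{n_j}) - O(1)$ is not literally oracle monotonicity, because the oracle $X$ does not determine which positions form $A_{n_j}$ (that depends on $r$); the correct error is the cost of $b(r)\restriction j$, which is $O(j) = o(n_j)$ — but your closing paragraph makes exactly this repair, so the argument stands.
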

\begin{proof}
We shall show that $K(\Mix(X,Z,r)\uhr n) = sn + o(n)$. Note that $K(b(r)\uhr n)$ is $O(\log n)$, so it suffices to consider $K(b(r)\uhr n, \Mix(X,Z,r)\uhr n)$ instead. To reduce the notation, we make the following abbreviations: let 
$\beta = b(r)\uhr n$ and for any $W,Y \in 2^\omega$, 
let $\alpha(W,Y) = \Mix(W,Y,r)\uhr n$. So we seek to analyze 
$K(\beta,\alpha(X,Z))$.
Since $\beta$ is enough information to determine which chunks came
from $X$ and which came from $Z$, we have 
\[
K(\beta, \alpha(X,Z)) = K(\beta, \alpha(X,0^\omega), \alpha(0^\omega,Z)) + O(1).
\]
Similarly, we can write $K(\beta, X\uhr n, Z\uhr n)$'s information as a 5-tuple
\[
K(\beta, X\uhr n, Z\uhr n) = K(\beta, \alpha(X,0^\omega), \alpha(0^\omega,Z), \alpha(Z,0^\omega), \alpha(0^\omega, X)) + O(1).
\]
As $X$ is a $s$-codeword and $Z$ is Bernoulli $H^{-1}(s)$-random relative 
to $X$, Proposition~\ref{prop:codeword-s} and the relativized 
Proposition~\ref{prop:bernoulli-2} imply that $K(X\uhr n, Z\uhr n) = 2sn + o(n)$, and thus $K(\beta, X\uhr n, Z\uhr n)$ has the same complexity up to 
an $o(n)$ term. Now we decompose the right hand side:
\begin{align*}
2sn = K(\beta) & + K(\alpha(X,0^\omega) \cond \beta)\\
& +K(\alpha(0^\omega,Z) \cond \beta,\alpha(X,0^\omega))\\
& + K(\alpha(Z,0^\omega) \cond \beta, \alpha(X,0^\omega), \alpha(0^\omega,Z))\\
& + K(\alpha(0^\omega, X) \cond \beta, \alpha(X,0^\omega), \alpha(0^\omega,Z), \alpha(Z,0^\omega)) + o(n).
\end{align*}
Since each $i$th chunk of $X$ comes from a set of codewords of log-size 
$si + o(i)$, given $\beta$ we may concatenate codes for those centers, yielding
\[
K(\alpha(X,0^\omega) \cond \beta) \leq (1-r)sn + o(n).
\]
The factor $(1-r)$ follows 
from Lemma~\ref{lem:mix-density}, because the total number of bits of
$\alpha(X,0^\omega)$ that came from $X$ is $(1-r)n + o(n)$.
By the same reasoning,
$K(\alpha(0^\omega,X) \cond \beta) \leq rsn + o(n)$.
Similarly, since each $i$th chunk of $Z$ has density $H^{-1}(s)i + o(i)$ 
(Proposition~\ref{prop:bernoulli-1}), 
each $i$th chunk of $Z$ has a code of length $si + o(i)$, and we have 
$K(\alpha(0^\omega,Z) \cond \beta) \leq rsn + o(n)$. Again, the factor of 
$r$ arises because in total $rn +o(n)$ bits 
of $\alpha(0^\omega,Z)$ were sourced from $Z$.
By the same reasoning, $K(\alpha(Z,0^\omega) \cond \beta)\leq (1-r)sn + o(n)$.
When more information is given in the conditionals, these bounds can only 
decrease further. However, their sum is $2sn +o(n)$. Therefore, each 
of these four bounds must be tight. In particular, 
$K(\alpha(X,0^\omega) \cond \beta) = (1-r)sn + o(n)$, and 
$K(\alpha(0^\omega,Z) \cond \beta, \alpha(X,0^\omega)) = rsn +o(n)$.
Adding these two terms to $K(\beta)$ gives
\[
K(\beta,\alpha(X,0^\omega),\alpha(0^\omega,Z)) = (1-r)sn + rsn + o(n) = sn +o(n),
\]
which completes the proof.
\end{proof}

\begin{thm} \label{thm:interpolate} Fix $s,t \in [0,1]$.
\begin{enumerate}
\item If $t\geq s$, then for any $d\in [H^{-1}(t-s), H^{-1}(t)-H^{-1}(s)]$, there is a sequence $X$ of dimension exactly $s$ such that $d(X,t) = d$.
\item If $t\leq s$, then for any $d \in [H^{-1}(s-t), \Worst(s,t)]$, there is a sequence $X$ of dimension exactly $s$ such that $d(X,t) = d$.
\end{enumerate}
\end{thm}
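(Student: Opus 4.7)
The plan is to interpolate between the two extremal types of dimension $s$ sequences using the $\Mix$ operator from Definition~\ref{defn:Mix}. The edge cases $s \in \{0,1\}$ collapse both intervals to a single point (for instance, when $s=1$ we have $c=0$ so $\Worst(1,t) = H^{-1}(1-t) = H^{-1}(s-t)$, and similarly the case (1) interval degenerates), so I may assume $0 < s < 1$. Fix an $s$-codeword $X_0$ (which exists by Definition~\ref{def:codeword} combined with Theorem~\ref{thm:DP} / Lemma~\ref{lem:joe}) and a Bernoulli $H^{-1}(s)$-random $X_1$ relative to $X_0$. For $r \in [0,1]$, define $Y_r = \Mix(X_0, X_1, r)$ and
\[
f(r) = d(Y_r, \mathcal{A}_t).
\]

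First, Lemma~\ref{lem:mix-dimension} guarantees that $\dim(Y_r) = s$ \emph{exactly} for every $r \in [0,1]$, so each $Y_r$ is an admissible witness. Second, I argue that $f\colon [0,1] \to [0,1]$ is continuous. The map $r \mapsto Y_r$ is continuous from $[0,1]$ into $(2^\omega, d)$ by Lemma~\ref{lem:Mix_cts}, and the distance-to-a-set function $Y \mapsto d(Y, \mathcal{A}_t)$ is $1$-Lipschitz in the Besicovitch pseudometric by the standard triangle inequality applied to the infimum defining it. Composing gives continuity of $f$.

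Third, I identify the endpoints. At $r=0$, $Y_0 = X_0$; at $r = 1$, $Y_1 = X_1$. Theorem~\ref{thm:main_thm} then tells us exactly what $f(0)$ and $f(1)$ are. In case (1) (with $t \geq s$), the $s$-codeword $X_0$ gives $f(0) = H^{-1}(t-s)$ and the Bernoulli random $X_1$ gives $f(1) = H^{-1}(t)-H^{-1}(s)$. In case (2) (with $t \leq s$), the roles swap: $f(0) = \Worst(s,t)$ for the codeword and $f(1) = H^{-1}(s-t)$ for the random. In either case, $f(0)$ and $f(1)$ are precisely the two endpoints of the claimed interval, so the intermediate value theorem produces, for every $d$ in that interval, an $r \in [0,1]$ with $f(r) = d$; the sequence $X = Y_r$ witnesses the theorem.

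The only genuine obstacle is ensuring that the mixing truly produces sequences of dimension \emph{exactly} $s$, rather than merely at least $s$ (without this, $f$ could fail to be well-defined as a distance to $\mathcal A_t$ for a dimension $s$ sequence, and indeed the $Y_r$ might slip below dimension $s$). That is exactly the content of Lemma~\ref{lem:mix-dimension}, which exploits the facts that $X_0$ has packing dimension $s$ and $X_1$ is Bernoulli-random relative to $X_0$. Once that lemma is in hand, the rest is a single invocation of the IVT against a $1$-Lipschitz distance function.
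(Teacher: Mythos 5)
Your proposal is correct and is essentially the paper's own argument: mix an $s$-codeword with a Bernoulli $H^{-1}(s)$-random (relative to it) via $\Mix$, use Lemma~\ref{lem:mix-dimension} for exact dimension $s$, Lemma~\ref{lem:Mix_cts} together with the Lipschitz continuity of $Z \mapsto d(Z,\A_t)$ for continuity, and the endpoint values from Theorem~\ref{thm:main_thm} with the intermediate value theorem. Your explicit remarks on the degenerate endpoints $s\in\{0,1\}$ and on the $1$-Lipschitz bound are harmless additions to the same proof.
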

\begin{proof}
Let $X_0$ be an $s$-codeword and let $X_1$ be a Bernoulli $H^{-1}(s)$-random 
relative to $X_0$. For each $r\in[0,1]$, let $X_r = \Mix(X_0,X_1,r)$. Regardless of $r$, $\dim(X_r) = \dim_P(X_r) = s$ by Lemma~\ref{lem:mix-dimension}.
Now consider the function $r\mapsto d(X_r, t)$. By Lemma~\ref{lem:Mix_cts},
this function is continuous (using also that $Z \mapsto d(Z,t)$ is a continuous function when its domain $2^\omega$ is considered with the Besicovitch pseudometric). Now we apply the intermediate value theorem. If 
$t\geq s$, we have $d(X_0,t) = H^{-1}(t-s)$ and $d(X_1,t) = H^{-1}(t) - H^{-1}(s)$, so there is some $r$ with $d(X_r,t) = d$. If $t \leq s$, we have 
$d(X_0,t) = \Worst(s,t)$ and $d(X_1,t) = H^{-1}(s-t)$, so again there is some 
$r$ with $d(X_r,t) = d$.
\end{proof}

The above proof is not very constructive; it does not tell us---for either part of the theorem---\emph{which} sequence $X = X_r$ satisfies the claim. It also does not give any information about which (Besicovitch equivalence classes of) sequences $Y \in \A_t$ can witness that $d(X,\A_t) = d$. If $s < t$, then such sequences could be viewed as being located ``on the lower edge'' of $\A_t$, because sequences of dimension $s$ (living ``below'') encounter them first when we raise their dimension. Similarly, for $s > t$, we could consider $Y \in \A_t$ to be ``on the upper edge'' of $\A_t$ if it can serve as the witness that $d(X,\A_t) = d$ for some $X \in \A_s$. The spacial analogy seems to break down a little because we do not know whether the answer differs depending on the choice of $s$ and $d$, but we can rule out certain possibilities.

Consider first the lower edge of $\A_t$ (i.e., the case where $s<t$). If $X \in \A_s$ and $d(X,\A_t) = d < \Worst(t,s)$, then no $t$-codeword can be the witness of this distance, simply because the $d$-ball around a $t$-codeword does not intersect $\A_s$. So, at least from the standpoint of these $s$ and $d$, no $t$-codeword is on the lower edge of $\A_t$. Note that such examples occur: if $s<t<1$, then $H^{-1}(t-s) < \Worst(t,s)$, but we know that it is possible that $d(X,\A_t) = d = H^{-1}(t-s)$ for $X\in\A_s$.

Turning our attention to the upper edge, consider when $s>t$. No Bernoulli $H^{-1}(t)$-random can be on the upper edge of $\A_t$ when $d < H^{-1}(s)-H^{-1}(t)$, similarly because the $d$-ball around a Bernoulli $H^{-1}(t)$-random does not intersect $\A_s$. Again, this is not a vacuous restriction.

However, it remains possible that all Bernoulli $H^{-1}(t)$-randoms are on the lower edge of $\A_t$ (for all possible values of $s$ and $d$), and it also remains possible that all $t$-codewords are on the upper edge of $\A_t$ (again, for all possible $s$ and $d$). Formally, we have the following question.

\begin{question}\ \label{qn:interpolate}
\begin{enumerate}
\item Given $t \geq s$, $d \in [H^{-1}(t-s), H^{-1}(t)-H^{-1}(s)]$, and any Bernoulli $H^{-1}(t)$-random $Y$, is there $X\in \A_s$ with $d(X,\A_t) = d(X,Y) = d$?
\item Given $t \leq s$, $d \in [H^{-1}(s-t), \Worst(s,t)]$, and any $t$-codeword $Y$, is there $X \in \A_s$ with $d(X,\A_t) = d(X,Y) = d$?
\end{enumerate}
\end{question}

\section{Independence of distance from a random}

Here we prove a refinement of Theorem~\ref{thm:interpolate}(1) in the case $t = 1$: $X$ can be chosen to be of distance $d$ from a given sequence of dimension $1$. This answers Question~\ref{qn:interpolate}(1) in the case $t = 1$, since Bernoulli $H^{-1}(t)$-randoms have dimension $1$ when $t=1$.  First we need a lemma that follows from expressing ternary entropy in two different ways. This lemma must be well-known, but we did not find a convenient reference so we provide a proof.

\begin{lem} \label{lem:ternary_entropy_identity}
If $a \in (0,1]$, $b \in [0,1)$, and $a\geq b$, then
\[ H(a) + H\left(\frac{a-b}{a}\right)a = H(1-b) + H\left(\frac{a-b}{1-b}\right)(1-b). \]
\end{lem}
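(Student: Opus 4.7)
The plan is to interpret both sides as two different decompositions of the ternary entropy of the same distribution. Specifically, consider the distribution on three outcomes with probabilities
\[ (p_1, p_2, p_3) = (a-b,\; b,\; 1-a), \]
which is a valid probability distribution exactly under the hypotheses $a \in (0,1]$, $b \in [0,1)$, $a \geq b$. Let
\[ H_3(p_1,p_2,p_3) = -p_1\log p_1 - p_2\log p_2 - p_3\log p_3. \]

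Next, I would apply the standard chain-rule identity for entropy: for any probability $q \in (0,1)$ and any partition of the outcomes into one outcome of probability $q$ and the rest,
\[ H_3(p_1,p_2,p_3) = H(q) + (1-q)\,H\!\left(\frac{p}{1-q}\right), \]
where $p$ is one of the two remaining probabilities. (This is immediate by expanding the right-hand side and comparing to the definition of $H_3$.) I would apply this identity in two ways, grouping $p_3$ against $\{p_1,p_2\}$ on one hand and $p_2$ against $\{p_1,p_3\}$ on the other:
\begin{align*}
H_3(a-b,b,1-a) &= H(1-a) + a\,H\!\left(\tfrac{a-b}{a}\right), \\
H_3(a-b,b,1-a) &= H(b) + (1-b)\,H\!\left(\tfrac{a-b}{1-b}\right).
\end{align*}
Equating and using the symmetry $H(1-a)=H(a)$ and $H(b)=H(1-b)$ yields the claimed identity.

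The only thing to watch is that the decomposition formula was stated for $q \in (0,1)$; I would briefly check the boundary cases $a=1$, $b=0$, and $a=b$ directly, each of which reduces both sides to $H(1-b)$, $H(a)$, and $H(a)$ respectively using $H(0)=H(1)=0$. There is no real obstacle here — the main point is just to notice that the two sides of the identity arise from conditioning the same ternary distribution on two different ``first outcomes,'' after which the result drops out with no computation.
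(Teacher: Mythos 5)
Your proof is correct and is essentially the paper's argument: the paper likewise identifies both sides with the ternary entropy of $(1-a,\,a-b,\,b)$ (it expands the left side by hand and invokes the symmetry under $a \mapsto 1-b$, $b \mapsto 1-a$, which is the same content as your two applications of the grouping identity plus $H(x)=H(1-x)$). Your handling of the boundary cases $a=1$, $b=0$, $a=b$ via $H(0)=H(1)=0$ is fine and matches the conventions the paper uses implicitly.
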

\begin{proof}
By definition of $H$,
\begin{align*}
H\left(\frac{a-b}{a}\right)a &= -a\left(\frac{a-b}{a}\right)\log\left(\frac{a-b}{a}\right) - a\left(\frac{b}{a}\right)\log\left(\frac{b}{a}\right) \\
&= -(a-b)(\log(a-b)-\log(a)) - b\log b + b\log a \\
&= -(a-b)\log(a-b) + a\log a - b\log a - b\log b + b\log a \\
&= -(a-b)\log(a-b) + a\log a - b\log b.
\end{align*}
Therefore
\begin{align*}
&H(a) + H\left(\frac{a-b}{a}\right)a \\
= \; &-a\log a - (1-a)\log(1-a) - (a-b)\log(a-b) + a\log a - b\log b \\
= \; &-(1-a)\log(1-a) - (a-b)\log(a-b) -b\log b.
\end{align*}

Observe that the above expression is invariant under the map $a \mapsto 1-b$, $b \mapsto 1-a$. (This expression is the ternary entropy with probabilities $1-a$, $a-b$ and $b$.) The invariance implies that
\begin{align*}
H(a) + H\left(\frac{a-b}{a}\right)a &= H(1-b) + H\left(\frac{1-b-(1-a)}{1-b}\right)(1-b) \\
&= H(1-b) + H\left(\frac{a-b}{1-b}\right)(1-b),
\end{align*}
as desired.
\end{proof}

\begin{thm} \label{thm:optimal_s_distance_dimH_1_realized}
Fix $s \in [0,1]$ and $d \in [H^{-1}(1-s),1/2 - H^{-1}(s)]$. For any $Y$ such that $\dim(Y) = 1$, there is a sequence $X$ of dimension exactly $s$ such that $d(X,1) = d$ and $d(X,Y) = d$ (so $Y$ witnesses that $d(X,1) \leq d$).
\end{thm}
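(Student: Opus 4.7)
The plan is to construct $X$ by interpolating via $\Mix$ between two extremal sequences of dimension exactly $s$, each of which has $Y$ as a witness for the distance to $\A_1$. A preliminary fact used throughout: $\dim(Y)=1$ forces $\lim_n\rho(Y\uhr n)=1/2$, since otherwise some $Y\uhr n$ of density bounded away from $1/2$ would satisfy $K(Y\uhr n)\le H(c)n+O(\log n)$ with $c<1/2$, contradicting $\dim(Y)=1$. The same holds for every $Z\in\A_1$, which is how we will control $d(X,\A_1)$ at the end.

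I would first build the two extremes. Let $X_0$ be an $s$-codeword with $Y$ as witness, selected chunkwise so that $X_0\uhr I_j$ differs from $Y\uhr I_j$ only at $r_j=H^{-1}(1-s)j+o(j)$ positions, all via $1\to 0$ flips; then $\rho(X_0)=1/2-H^{-1}(1-s)$, $d(X_0,Y)=H^{-1}(1-s)$, and $\dim(X_0)=s$ by Proposition~\ref{prop:codeword-s}. Let $X_1=Y\wedge A$, where $A$ is Bernoulli $2H^{-1}(s)$-random relative to $(X_0,Y)$; then $\rho(X_1)=H^{-1}(s)$ and $d(X_1,Y)=1/2-H^{-1}(s)$. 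The key claim is $\dim(X_1)=s$. The upper bound is immediate from the density constraint. For the lower bound I would invoke the symmetry of information
\[
K(X_1\uhr n)=K(Y\uhr n)+K(X_1\uhr n\cond Y\uhr n)-K(Y\uhr n\cond X_1\uhr n)+O(\log n),
\]
with $K(X_1\cond Y)=(n/2)H(2H^{-1}(s))+o(n)$ (the Bernoulli contribution of $A$ at $Y$'s $1$-positions, via the relativised Proposition~\ref{prop:bernoulli-2}) and $K(Y\cond X_1)\le(1-H^{-1}(s))\,H\!\left(\frac{1/2-H^{-1}(s)}{1-H^{-1}(s)}\right)n+o(n)$ by counting the placements of $Y$'s $(1/2-H^{-1}(s))n$ additional $1$'s among $X_1$'s $(1-H^{-1}(s))n$ zeros. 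Combined with $K(Y\uhr n)\ge n-o(n)$, applying Lemma~\ref{lem:ternary_entropy_identity} at $a=1/2$, $b=H^{-1}(s)$ collapses these three terms to exactly $sn$.

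Next set $X_\lambda=\Mix(X_0,X_1,\lambda)$ for $\lambda\in[0,1]$. I would verify $\dim(X_\lambda)=s$, $\rho(X_\lambda)=1/2-d_\lambda$, and $d(X_\lambda,Y)=d_\lambda$, where $d_\lambda=(1-\lambda)H^{-1}(1-s)+\lambda(1/2-H^{-1}(s))$. The density and distance identities come by chunk averaging, analogous to Lemma~\ref{lem:mix-density}, and a direct computation gives $\rho(X_\lambda)=1/2-d_\lambda$. The dimension upper bound $\dim(X_\lambda)\le s$ is chunkwise, using $K(X_0\uhr I_j),K(X_1\uhr I_j)\le sj+o(j)$. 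For the lower bound, the symmetry-of-information calculation from the previous paragraph extends: $K(Y\cond X_\lambda)$ splits into a $(1-\lambda)$-weighted sum of $(1-s)i$ contributions from $X_0$-chunks (covering-ball bound) plus a $\lambda$-weighted sum of $Q\,i$ contributions from $X_1$-chunks, where $Q=1+H(2H^{-1}(s))/2-s$ is the constant produced by Lemma~\ref{lem:ternary_entropy_identity}; $K(X_\lambda\cond Y)$ contributes only on $X_1$-chunks, totalling $\lambda\cdot(n/2)H(2H^{-1}(s))$; everything cancels and we arrive at $K(X_\lambda\uhr n)\ge sn-o(n)$.

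To finish, $\lambda\mapsto d(X_\lambda,Y)$ is continuous by Lemma~\ref{lem:Mix_cts} and sweeps $[H^{-1}(1-s),1/2-H^{-1}(s)]$ as $\lambda$ runs from $0$ to $1$, so the intermediate value theorem supplies $\lambda$ with $d(X_\lambda,Y)=r$; set $X=X_\lambda$. The equality $d(X,\A_1)=r$ then follows by combining $d(X,\A_1)\le d(X,Y)=r$ with the density lower bound: for any $Z\in\A_1$ the opening observation gives $\rho(Z\uhr n)\to 1/2$, hence $d(X,Z)\ge\lim_n|\rho(X\uhr n)-\rho(Z\uhr n)|=|(1/2-r)-1/2|=r$. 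The main technical obstacle is arranging, in the construction of $X_0$, a codeword from $C^j_{r_j}$ close to $Y\uhr I_j$ that can be reached by $1\to 0$ flips only; a generic covering code need not contain such a codeword, so this will likely require a density-refinement of Lemma~\ref{lem:joe}, or a direct counting argument showing that density-controlled covering codewords always exist within the $r_j$-ball around $Y\uhr I_j$.
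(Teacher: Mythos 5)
Your overall architecture (two extremal sequences, $\Mix$, then sweep the mixing ratio) matches the paper's, and your $X_1$ is exactly the paper's construction. The genuine gap is $X_0$ and, with it, your entire mechanism for the lower bound $d(X,\A_1)\ge d$. You want $X_0$ to be an $s$-codeword obtained from $Y$ by $1\to 0$ flips only, so that $\rho(X_\lambda)=1/2-d_\lambda$ and the distance to every dimension-$1$ sequence is certified by density deficiency alone. You flag the existence of such a density-controlled codeword as a technical obstacle to be fixed by refining Lemma~\ref{lem:joe}; in fact no refinement can exist, because the desired $X_0$ is information-theoretically impossible. If $X_0$ differs from $Y$ (of dimension $1$, hence prefix density $\to 1/2$) essentially only by downward flips of density $\delta=H^{-1}(1-s)$, then $X_0\uhr n$ has at most $(1/2+\delta)n+o(n)$ zeros and $Y\uhr n$ is recovered from $X_0\uhr n$ by specifying which $\le(\delta+\epsilon)n$ of those zeros were flipped, so $K(Y\uhr n \cond X_0\uhr n)\le (1/2+\delta)H\bigl(\tfrac{\delta}{1/2+\delta}\bigr)n+o(n)+O(\epsilon n)$; since $K(Y\uhr n)\ge(1-\epsilon)n$ for all large $n$, this forces $\dim(X_0)\ge 1-(1/2+\delta)H\bigl(\tfrac{\delta}{1/2+\delta}\bigr)$, which by strict concavity of $H$ (so $(1/2+\delta)H\bigl(\tfrac{\delta}{1/2+\delta}\bigr)<H(\delta)=1-s$) is strictly greater than $s$ for $s\in(0,1)$ (about $0.585$ when $s=1/2$). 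So no sequence of dimension $s$ at distance $H^{-1}(1-s)$ from $Y$ can sit below $Y$ with density $1/2-H^{-1}(1-s)$, and the same computation kills the density certificate for every $d$ with $1-(1/2+d)H\bigl(\tfrac{d}{1/2+d}\bigr)>s$, not just at the endpoint. (A smaller issue: even granting the flips, $\rho(X_0)=1/2-H^{-1}(1-s)$ would need the chunk densities $\rho(Y\uhr I_j)$ to tend to $1/2$, which does not follow from $\dim(Y)=1$; only prefix densities converge.)

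The paper avoids this by imposing no density constraint on $X_0$: it is just any element of $C^j_{r_j}$ within $r_j$ of $Y\uhr I_j$. The price is that $d(X,1)\ge d$ can no longer be read off from densities, and the paper proves it by a two-case compressibility argument: given $Z$ with $d(X,Z)<d$, either $Z$ is close to $X$ on infinitely many unions of ``center'' chunks (where $X=X_0$), in which case $Z\uhr n_j$ is described there by the codewords ($s\cdot|C_j|$ bits) plus the low-density difference $X\symdiff Z$, or $Z$ is close to $X$ on the ``erase'' chunks (where $X=X_1$), in which case $Z$ itself has density bounded away from $1/2$ there; either way $\dim(Z)<1$. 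Your proposal has no substitute for this step once the density argument is unavailable, so the gap is a missing idea rather than a fixable technicality. (Your treatment of $X_1$, including the symmetry-of-information computation via Lemma~\ref{lem:ternary_entropy_identity}, does agree with the paper and is fine.)
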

\begin{proof}
We start by defining two sequences $X_0$ and $X_1$ that witness the theorem
when $d$ is $H^{-1}(1-s)$ or $1/2-H^{-1}(s)$, respectively. The sequence $X_0$
will be an $s$-codeword which is as close as possible to $Y$. The sequence $X_1$ will have density $H^{-1}(s)$ while being as close as possible to $Y$.

The sequence $X_0$ is constructed as follows. For each $n \in \omega$, let $r_n$ denote the integer $\lceil H^{-1}(1-s)n \rceil \leq n$. Then define $X_0\restriction I_n$ to be any element of the $r_n$-covering code $C^n_{r_n}$ which is within distance $r_n$ from $Y\restriction I_n$. By construction, $X_0$ is an $s$-codeword. So $X_0$ has dimension exactly $s$ (Proposition~\ref{prop:codeword-s}), $d(X_0,Y) \leq H^{-1}(1-s)$ (by construction), and $d(X_0,1) \geq H^{-1}(1-s)$ \cite[Proposition 3.1]{GMSW:18}. This proves the theorem in the case $d = H^{-1}(1-s)$.

The sequence $X_1$ is constructed as follows. Intuitively, we alter $Y$ to obtain $X_1$ by leaving all 0's in $Y$ unchanged, and flipping each 1 in $Y$ to a zero with some fixed probability, where the probability is chosen so that $X_1$ will have density $H^{-1}(s)$. Formally, fix some $B\in 2^\omega$ that is Bernoulli $2H^{-1}(s)$-random relative to $Y$. Listing the elements of $B = \{b_0<b_1<b_2<\cdots \}$ and $Y= \{y_0<y_1<y_2<\cdots\}$ in increasing order, we define $X_1 = \{y_{b_0},y_{b_1},y_{b_2},\dots\}$. (In \cite{JustinMiller} this would be denoted $X_1 = B \triangleright Y$). Since $Y$ has density 1/2 and $B$ has density $2H^{-1}(s)$, $X_1$ has density $H^{-1}(s)$. Therefore, by density considerations $d(X_1,1) \geq 1/2 - H^{-1}(s)$, and by construction this is also exactly $d(X_1,Y)$.

It remains to show that the dimension of $X_1$ is exactly $s$.
We have $\dim_P(X) \leq s$ because $X_1$ has density $H^{-1}(s)$.
To see that $\dim(X) \geq s$, consider the complexities 
$K(X_1\restriction n, Y\restriction n)$.
Letting $m(n)$ denote the number of 1's in $Y\restriction n$, we know that 
$m(n) = \frac{1}{2}n \pm o(n)$. Since 
$K(Y\restriction n, B\restriction m(n)) = K(X_1\restriction n, Y\restriction n) \pm o(n)$, 
we have
\[
K(Y\restriction n) + K(B\restriction m(n) \cond Y\restriction n) = K(X_1\restriction n) + K(Y\restriction n \cond X_1\restriction n) \pm o(n).
\]
As $\dim(Y) = 1$, $K(Y\restriction n) = n \pm o(n)$. Also $B$ is $2H^{-1}(s)$-random
relative to $Y$, so $K(B\restriction m(n) \cond Y\restriction n) = H(2H^{-1}(s)) m(n) \pm o(n) = \frac{1}{2}H(2H^{-1}(s)) n \pm o(n)$. Next we can place an upper bound on $K(Y\restriction n \cond X_1\restriction n)$
as follows. Letting $\ell(n)$ denote the number of 1's in $X_1\restriction n$,
we have already seen that $\ell(n) = H^{-1}(s)n \pm o(n)$. So there are 
$m(n)-\ell(n)$ many 0's in $X_1$ that become 1's in $Y$, and to specify 
$Y$ it suffices to pick these out of the total of $n-\ell(n)$ many 0's in $X_1\restriction n$.
So, letting $p = H^{-1}(s)$, 
the fraction we are picking out is 
$\frac{m(n)-\ell(n)}{n-\ell(n)} = \frac{\frac{1}{2}-p}{1-p} \pm o(1)$, 
and the total number we are picking from is $n-\ell(n) = (1-p)n \pm o(n)$,
so the number of bits required to code these 1's is $(1-p)H\left(\frac{\frac{1}{2}-p}{1-p}\right)n \pm o(n)$.
Therefore,
$$\textstyle{n + \frac{1}{2}H(2p) n \leq K(X_1\restriction n) + (1-p)H\left(\frac{\frac{1}{2}-p}{1-p}\right)n \pm o(n)},$$
which rearranges to $K(X_1\restriction n) \geq \left(1 + \frac{1}{2}H(2p) - (1-p)H\left(\frac{\frac{1}{2}-p}{1-p}\right)\right)n \pm o(n)$. Now we apply Lemma~\ref{lem:ternary_entropy_identity} with $a = \frac{1}{2}$ 
and $b=p=H^{-1}(s)$ to conclude that 
\begin{equation}\label{eqn:ternary}
\textstyle 1 + \frac{1}{2}H(2p) - (1-p)H\left(\frac{\frac{1}{2}-p}{1-p}\right) = s,
\end{equation}
as needed. (Note that this calculation uses the fact that $H(1-x) = H(x)$, i.e., the symmetry of $H$ around $1/2$.)
This proves the theorem in the case $d = 1/2-H^{-1}(s)$.

To prove the theorem when $d$ is not an endpoint, we interpolate between these
two cases. The argument is a more explicit version of what we did in Section~\ref{sec:intermediate}. Let $r$ be
$$r=\frac{d-H^{-1}(1-s)}{(1/2-H^{-1}(s))-H^{-1}(1-s)}.$$ 
This $r$, as will be seen below, is precisely the ratio in which $X_0$ 
and $X_1$ should be mixed to ensure that 
$d(X,Y) = d$. Since $d$ is not an endpoint, $0 < r < 1$.

We define both $X_0$ and $X_1$ as above, where for $X_1$ we again use a sequence $B$ that is $2H^{-1}(s)$-Bernoulli random relative to $Y$.
Let $X = \Mix(X_0,X_1,r)$.

\begin{lem}
For $X$ defined as above, $d(X,Y) \leq d$.
\end{lem}
\begin{proof} We count the bits changed as follows. 
Whenever $X\restriction I_j = X_0\restriction I_j$, we have 
$\Delta(X\restriction I_j, Y \restriction I_j) \leq r_j = H^{-1}(1-s)j + o(j)$
by the construction of $X_0$.

Also, whenever $X\restriction I_j = X_1\restriction I_j$, letting $K_j$ denote 
the interval of bits of $B$ which was consulted in the definition of $X\restriction I_j$,
we have $\Delta(X\restriction I_j, Y \restriction I_j)$ is equal to the number of 
0's in $B\restriction K_j$. We have $|K_j| = \frac{1}{2}j + o(j)$ by Proposition
\ref{prop:bernoulli-1} applied to $Y$ and the sequence of $I_j$. Applying the 
same proposition (now relativized to $Y$) to $B$ and the sequence of $K_j$,
we see that $\rho(B\restriction K_j) = 2H^{-1}(s) + o(1)$. So the number of 0's 
in $B\restriction K_j$ is 
\[
\textstyle \left(1-2H^{-1}(s)+o(1)\right)\left(\frac{1}{2}j + o(j)\right) = \left(\frac{1}{2} - H^{-1}(s)\right)j + o(j).
\]
Therefore, for each $j$,
\begin{align*}
\Delta(X\restriction n_j, Y \restriction n_j)
&\leq \sum_{\substack{i<j\\ b(r)(i)=0}} H^{-1}(1-s)i + o(i) \\[-20pt]
& \hspace{3cm} + \sum_{\substack{i<j\\ b(r)(i) = 1}}\textstyle \left(\frac{1}{2} - H^{-1}(s)\right)i + o(i)\\
&=\textstyle (1-r)H^{-1}(1-s)n_j + r\left(\frac{1}{2} - H^{-1}(s)\right)n_j + o(n_j)\\
&= dn_j + o(n_j),
\end{align*}
where the last step follows by the choice of $r$. Therefore $d(X,Y) \leq d$.
\end{proof}

The fact that $d(X,Y) \geq d$ will follow when we later show that $d(X,1) \geq d$.

\begin{lem}
For $X$ defined as above, $\dim(X) = \dim_P(X) = s$.
\end{lem}
\begin{proof} It is clear that $\dim_P(X) \leq s$
by considering how $X$ can be described on each chunk. 
Whenever $X\restriction I_j = X_0\restriction I_j$, we have 
$X\restriction I_j \in C_{r_j}^j$, so $K(X\restriction I_j) \leq sj + o(j)$.
Whenever $X \restriction I_j = X_1\restriction I_j$, we have 
$\rho(X\restriction I_j) = H^{-1}(s)j + o(j)$, and thus $K(X\restriction I_j) \leq sj + o(j)$.
By concatenating such descriptions, $K(X\restriction n_j) \leq sj + o(n_j)$.

To see that $\dim(X) \geq s$, consider $K(Y\restriction n_j, B\restriction m(n_j))$,
where as before $m(n_j)$ is the number of bits of $B$ used in the creation 
of $X_1\restriction n_j$. Similar to above, we have the following inequality:
\[
K(Y\restriction n_j) + K(B\restriction m(n_j)) \leq K(X\restriction n_j) + K(Y\restriction n_j, B\restriction m(n_j) \cond X\restriction n_j)+o(n).
\]
We can bound the right hand side by bounding, for each $j$, the quantity
$K(Y\restriction I_j, B\restriction K_j \cond X\restriction I_j)$. When 
$X\restriction I_j = X_0 \restriction I_j$, relative to $X\restriction I_j$
we can describe $Y\restriction I_j$ 
with $H(r_j)j + o(j) = (1-s)j + o(j)$ bits, and we can describe $B\restriction K_j$ 
with $\frac{1}{2}H(2p)j + o(j)$ bits (using as above that $|K_j| = \frac{1}{2}j + o(j)$ and that the density of 1's in $B\restriction K_j$ is $2p + o(1)$). When $X\restriction I_j = X_1\restriction I_j$, we can describe $Y\restriction I_j$
by describing which 0's from $X_1\restriction I_j$ should be turned to 1's. 
As we saw already, there are $(1-p)j + o(j)$ many 0's in $X_1\restriction I_j$ 
and $(\frac{1}{2}-p)j +o(j)$ many of these need to be identified and turned into 1's.
So $K(Y\restriction I_j \cond X \restriction I_j) \leq H\Bigl(\frac{\frac{1}{2}-p}{1-p}\Bigr)(1-p)j + o(j)$. Finally, given $X\restriction I_j$ and $Y\restriction I_j$, this completely 
determines $B\restriction K_j$. So we have the bound

\begin{align*}
K(Y\restriction n_j, B\restriction m(n_j) \cond X\restriction n_j)\hspace{-3cm}\\
&\leq \sum_{\substack{i<j\\b(r)(i)=0}}\textstyle \left((1-s) + \frac{1}{2}H(2p)\right)i + o(i)\\[-20pt]
&\hspace{3cm} + \sum_{\substack{i<j\\ b(r)(i)=1}}\textstyle H\left(\frac{\frac{1}{2}-p}{1-p}\right)(1-p)i + o(i)\\
 &\textstyle = (1-r)\left((1-s) + \frac{1}{2}H(2p)\right)n_j + rH\left(\frac{\frac{1}{2}-p}{1-p}\right)(1-p)n_j + o(n_j)\\
 &= \textstyle \left((1-s) + \frac{1}{2}H(2p)\right)n_j + o(n_j),
 \end{align*}
the terms related to $r$ having canceled out due to the ternary entropy identity (see equation~\eqref{eqn:ternary}).
Finally, because $K(Y\restriction n_j) = n_j + o(n_j)$ and $K(B \restriction m(n_j)) = \frac{1}{2}H(2p)n_j +o(n_j)$ we can conclude that, as required,
\begin{align*}
K(X\restriction n_j) &\textstyle \geq n_j\left(1 + \frac{1}{2}H(2p) - \left((1-s) + \frac{1}{2}H(2p)\right)\right) + o(n_j)\\
&= sn_j + o(n_j).\qedhere
\end{align*}
\end{proof}

\begin{lem}
For $X$ defined as above, $d(X,1) \geq d$.
\end{lem}
\begin{proof} Suppose $Z$ is such that $d(X,Z) < d$. We shall show that $\dim(Z) < 1$. For each $j$, let $C_j$ be the union of all $I_i$ such that $i<j$ and $b(r)(i) = 0$ (we will call the corresponding intervals $I_i$ ``center chunks'') and let $E_j$ be the union of all $I_i$ such that $i < j$ and $b(r)(i) = 1$ (we will call the corresponding $I_i$ ``erase chunks''). Note that $|C_j| + |E_j| = \sum_{i<j} |I_i| = n_j$. Writing $C_j = \{c_0<c_1<\dots<c_k\}$, let $X\restriction C_j$ 
denote $X(c_0)X(c_1)\dots X(c_k)$, and similarly for $X\restriction E_j$.

\underline{Case 1.} Suppose $Z$ is close to $X$ on many center chunks, i.e.,
\[ \liminf_{j \to \infty} d(X\restriction C_j,Z\restriction C_j) < H^{-1}(1-s). \]
Since $H$ is increasing on the range of $H^{-1}$ (i.e., $[0,1/2]$), we have
\[ \liminf_{j \to \infty} H(d(X\restriction C_j,Z\restriction C_j)) < 1-s. \]
This implies that we have infinitely many opportunities to give short descriptions for $(X\symdiff Z)\restriction C_j$, by exploiting its low density.

We shall describe $Z$ on center chunks by describing $X \symdiff Z$ ($=X_0 \symdiff Z$) on center chunks (by specifying it amongst strings with the same density) as well as $X_0$ on center chunks. As for erase chunks, we describe $Z$ verbatim. This method of description yields the following bound for each $j$:
\[ K(Z\restriction n_j) \leq |E_j| + s \cdot |C_j| + H(d(X\restriction C_j,Z\restriction C_j)) \cdot |C_j| + o(n_j). \]
To bound the right hand side, note that $\lim_j |C_j|/n_j = 1-r > 0$. Since $n_j = |E_j| + |C_j|$ and $\liminf_j H(d(X\restriction C_j,Z\restriction C_j)) < 1-s$ (as derived above), it follows that $\liminf_j K(Z\restriction n_j)/n_j < 1$. Therefore $\dim(Z) < 1$ as desired.

\underline{Case 2.} Otherwise, $Z$ is close to $X$ on many erase chunks, i.e.,
\[ \liminf_{j \to \infty} d(X\restriction E_j,Z\restriction E_j) < 1/2-H^{-1}(s). \]
This allows us to bound $\rho(Z\restriction E_j)$: Since $\lim_j \rho(X\restriction E_j) = \lim_j \rho(X_1\restriction E_j) = H^{-1}(s)$, it follows that $\liminf_j\rho(Z\restriction E_j) < 1/2$. Therefore
\[ \liminf_{j \to \infty} H(\rho(Z\restriction E_j)) < 1 \]
because $H$ is increasing on $[0,1/2]$. This implies that we have infinitely many opportunities to give short descriptions for $Z\restriction E_j$, by exploiting its low density.

Now we can describe initial segments of $Z$ by describing them verbatim on center chunks and describing them on erase chunks by specifying them among the strings of the same density. This method of description yields the following bound for each $j$:
\[ K(Z\restriction n_j) \leq |C_j| + H(\rho(Z\restriction E_j)) \cdot |E_j| + o(n_j). \]
To bound the right hand side, note that $\lim_j |E_j|/n_j = r > 0$. Since $n_j = |C_j| + |E_j|$ and $\liminf_j H(\rho(Z\restriction E_j)) < 1$ (as derived above), it follows that $\liminf_j K(Z\restriction n_j)/n_j <~1$. Therefore $\dim(Z) < 1$, as desired.
\end{proof}
We have shown that $d(X,Y) = d(X,1) = d$ and $\dim(X)=\dim_P(X) =s$,
completing the proof of Theorem~\ref{thm:optimal_s_distance_dimH_1_realized}.
\end{proof}

\bibliographystyle{plain}
\bibliography{references}

\end{document}